\DeclareMathOperator*{\esssup}{ess\,sup}
\newcommand{\vphi}{\varphi}
\newcommand{\veps}{\varepsilon}
\newcommand{\dd}{\partial}
\newcommand{\md}{\mathrm{d}}
\newcommand{\R}{\mathbb{R}}
\newcommand{\Rmnum}[1]{\uppercase\expandafter{\romannumeral#1}} 
\newcommand{\bbE}{\mathbb{E}}
\newcommand{\bbP}{\mathbb{P}}
\newcommand{\bbZ}{\mathbb{Z}}
\newcommand{\calE}{\mathcal{E}}
\newcommand{\calF}{\mathcal{F}}
\newcommand{\calK}{\mathcal{K}}
\newcommand{\calN}{\mathcal{N}}
\newcommand{\myset}[1]{\left\{#1\right\}}
\newcommand{\mybar}[1]{\overline{#1}}
\newcommand{\mynorm}[1]{\lVert#1\rVert}
\newcommand{\mytilde}[1]{\widetilde{#1}}
\newcommand{\mysupp}{\mathrm{supp}}
\def\Xint#1{\mathchoice
{\XXint\displaystyle\textstyle{#1}}%
{\XXint\textstyle\scriptstyle{#1}}%
{\XXint\scriptstyle\scriptscriptstyle{#1}}%
{\XXint\scriptscriptstyle\scriptscriptstyle{#1}}%
\!\int}
\def\XXint#1#2#3{{\setbox0=\hbox{$#1{#2#3}{\int}$ }
\vcenter{\hbox{$#2#3$ }}\kern-.6\wd0}}
\def\dashint{\Xint-}
\newtheorem{mythm}{Theorem}[section]
\newtheorem{myprop}[mythm]{Proposition}
\newtheorem{mylem}[mythm]{Lemma}
\newtheorem{mycor}[mythm]{Corollary}
\newtheorem{myrmk}[mythm]{Remark}
\begin{document}

\title{{\Large{\bf{Gradient Estimate for the Heat Kernel on Some Fractal-Like Cable Systems and Quasi-Riesz Transforms}}}}
\author{Baptiste Devyver, Emmanuel Russ and Meng Yang}
\date{}

\maketitle

\abstract{We give pointwise upper estimate for the gradient of the heat kernel on some fractal-like cable systems including the Vicsek and the Sierpi\'nski cable systems. Applications to $L^p$-boundedness of quasi-Riesz transforms are derived.}

\renewcommand{\thefootnote}{}

\footnote{\textsl{Date}: \today}
\footnote{\textsl{MSC2010}: 28A80, 35K08}
\footnote{\textsl{Keywords}: gradient estimate, heat kernel, Vicsek set, Sierpi\'nski gasket, cable system}

\renewcommand{\thefootnote}{\alph{footnote}}
\setcounter{footnote}{0}

\section{Introduction}\label{sec_intro}

Throughout the paper, the letters $C,C_1,C_2,C_A,C_B$ will always refer to some positive constants and may change at each occurrence. The sign $\asymp$ means that the ratio of the two sides is bounded from above and below by positive constants. The sign $\lesssim$ ($\gtrsim$) means that the LHS is bounded by positive constant times the RHS from above (below).

On a complete non-compact Riemannian manifold, a celebrated result independently discovered by Grigor'yan \cite{Gri92} and Saloff-Coste \cite{Sal92,Sal95} is that the following two-sided Gaussian bound of the heat kernel
\begin{equation}\label{eqn_Gauss}
\frac{C_1}{V(x,\sqrt{t})}\exp\left(-C_2\frac{d(x,y)^2}{t}\right)\le p_t(x,y)\le\frac{C_3}{V(x,\sqrt{t})}\exp\left(-C_4\frac{d(x,y)^2}{t}\right),
\end{equation}
where $V(x,r)$ denotes the Riemannian measure of the open geodesic ball with center $x$ and radius $r$, is equivalent to the conjunction of the volume doubling condition and the scale-invariant $L^2$-Poincar\'e inequality on balls. The proofs go through a parabolic Harnack inequality for the solutions of the heat equation, which is itself equivalent to (\ref{eqn_Gauss}).

However, the matching upper estimate of the gradient of the heat kernel
$$|\nabla_yp_t(x,y)|\le\frac{C_1}{\sqrt{t}V(x,\sqrt{t})}\exp\left(-C_2\frac{d(x,y)^2}{t}\right)$$
only holds in some cases, for example, Riemannian manifolds with non-negative Ricci curvature \cite{LY86}, Lie groups with polynomial volume growth \cite{Sal90} and covering manifolds with polynomial volume growth \cite{Dun04a,Dun04b}.

Pointwise or (weighted) $L^p$-bounds for the gradient of the heat kernel play an important role in the $L^p$-boundedness of the Riesz transform for $p>2$. On a complete non-compact Riemannian manifold, it is obvious that $\mynorm{|\nabla u|}_2=\mynorm{\Delta^{1/2}u}_2$ for any smooth function $u$ with compact support, hence the Riesz transform $\nabla\Delta^{-1/2}$ is $L^2$-bounded. Strichartz \cite{Str83} formulated the following question: for which values of $p\in (1,+\infty)$ is the Riesz transform $\nabla\Delta^{-1/2}$ $L^p$-bounded? A celebrated result was given by Coulhon and Duong \cite[Theorem 1.1]{CD99} that the volume doubling condition and a pointwise on-diagonal upper bound of the heat kernel imply the $L^p$-boundedness of the Riesz transform for any $p\in(1,2]$. Moreover, this conclusion is false when $p>2$, as the counterexample given by the connected sum of two copies of $\R^n$ shows (\cite[Section 5]{CD99}). For $p>2$, Auscher, Coulhon, Duong and Hofmann \cite[Theorem 1.3]{ACDH04} proved that, under the volume doubling condition and the two-sided Gaussian bound of the heat kernel, the $L^p$-estimate of the gradient of the heat kernel is equivalent to the $L^p$-boundedness of the Riesz transform in some proper sense. Recently, Coulhon, Jiang, Koskela and Sikora \cite{CJKS20} generalized the above result to metric measure spaces endowed with a Dirichlet form deriving from a ``carr\'e du champ" and improved the results of \cite{ACDH04}, even in the case of Riemannian manifolds.

Fractals provide new examples with very different phenomena. One important estimate is the so-called sub-Gaussian bound as follows.
$$\frac{C_1}{V(x,t^{1/\beta})}\exp\left(-C_2\left(\frac{d(x,y)}{t^{1/\beta}}\right)^{\frac{\beta}{\beta-1}}\right)\le p_t(x,y)\le\frac{C_3}{V(x,t^{1/\beta})}\exp\left(-C_4\left(\frac{d(x,y)}{t^{1/\beta}}\right)^{\frac{\beta}{\beta-1}}\right),$$
where $\beta$ is a new parameter called the walk dimension which is always strictly greater than 2 on fractals. For example, on the Sierpi\'nski gasket (see Figure \ref{fig_SG}), $\beta=\log5/\log2$, see \cite{BP88,Kig89}, on the Sierpi\'nski carpet (see Figure \ref{fig_SC}), $\beta\approx2.09697$, see \cite{BB89,BB90,BBS90,BB92,HKKZ00,KZ92}.

\begin{figure}[ht]
\centering
\begin{minipage}[t]{0.48\textwidth}
\centering
\includegraphics[width=\textwidth]{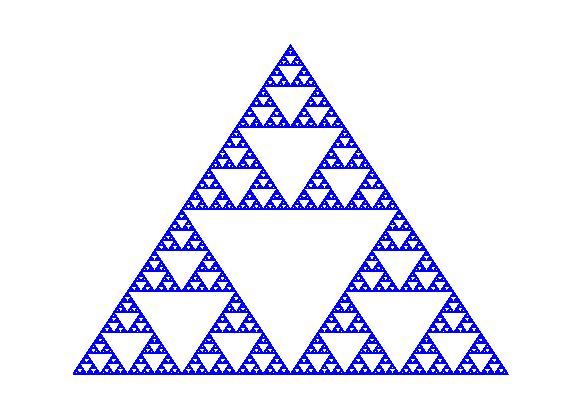}
\caption{The Sierpi\'nski Gasket}\label{fig_SG}
\end{minipage}
\begin{minipage}[t]{0.48\textwidth}
\centering
\includegraphics[width=\textwidth]{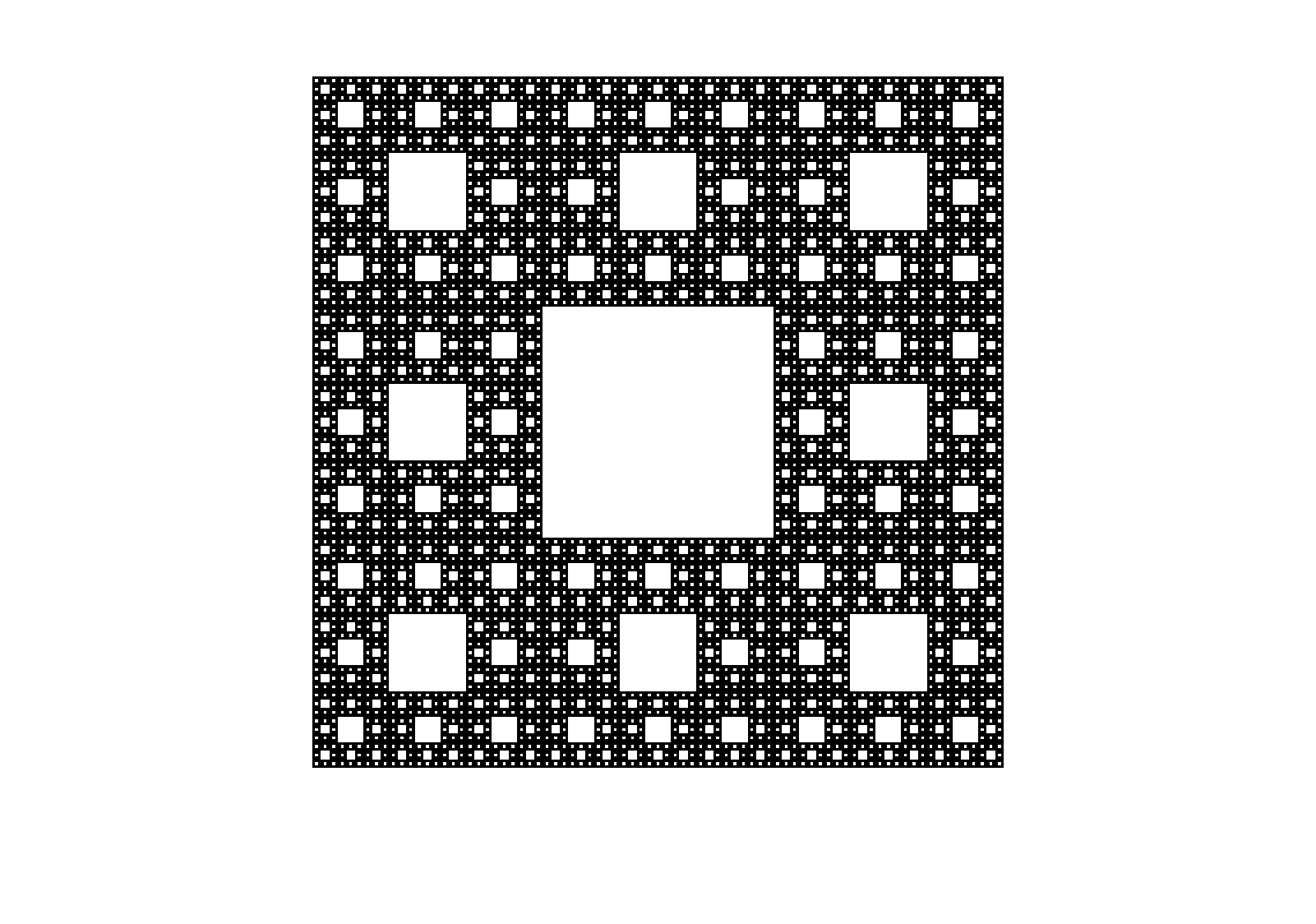}
\caption{The Sierpi\'nski Carpet}\label{fig_SC}
\end{minipage}
\end{figure}

A natural question is to consider the matching upper estimate of the gradient of the heat kernel. However, gradient operator can not be easily defined using the classical Euclidean way due to the existence of too many ``holes". We turn to consider the corresponding fractal-like manifolds or fractal-like cable systems. Roughly speaking, given a fractal, by translating the small scale self-similar property, we obtain an infinite graph with self-similar property in the large scale. If we replace each edge of the graph by a tube and glue these tubes smoothly at each vertex, then we obtain a fractal-like manifold where the gradient operator is the standard one on a Riemannian manifold. If we replace each edge of the graph by an interval, then we obtain a fractal-like cable system where the gradient operator can be defined as the usual derivative on each interval (although only one-sided derivatives are well-defined at the endpoints of each interval, it does not matter since the set of all the endpoints has measure zero in our consideration).

On a fractal-like manifold or a fractal-like cable system, one can consider the Riesz transform. Chen, Coulhon, Feneuil and the second author \cite{CCFR17}\footnote{Actually, only the cases of manifolds and graphs were considered in \cite{CCFR17}, but the same methods could easily yield the corresponding results in the case of cable systems.} proved that the volume doubling condition and the sub-Gaussian heat kernel upper bound\footnote{More precisely, a Gaussian upper bound is assumed for small $t$, while a sub-Gaussian one is assumed for large $t$.} imply the $L^p$-boundedness of the Riesz transform for any $p\in(1,2]$. They also proved that in the Vicsek case, the Riesz transform is $L^p$-bounded if and only if $p\in(1,2]$, where the fact that the Vicsek set is a tree was intrinsically used to do some explicit calculations of the $L^p$-norms of harmonic functions. Amenta \cite{Ame21} generalized the $L^p$-unboundedness for $p>2$ to other Riemannian manifolds that satisfy the so-called spinal condition which can be regarded as a weaker form of the tree condition.

In this paper, we consider some fractal-like cable systems satisfying certain geometric and functional assumptions, and obtain pointwise upper bound for the gradient of the heat kernel as well as the $L^p$-boundedness for the ``quasi-Riesz transforms", which are a suitable variant of the Riesz transform.

Our first main result is a pointwise gradient upper estimate for the heat kernel on a cable system $X$ under some assumptions, which we only roughly describe here (the precise definitions of which will be given in Section \ref{sec_MMD}). The assumptions on $X$ involve two positive parameters $\alpha$ and $\beta$ satisfying $2\le\beta\le\alpha+1$ and two functions $\Phi$ and $\Psi$ defined on $(0,+\infty)$ given by
\begin{equation}\label{eqn_PhiPsi}
\begin{split}
\Phi(r)&=r1_{(0,1)}(r)+r^\alpha1_{[1,+\infty)}(r),\\
\Psi(r)&=r^21_{(0,1)}(r)+r^{\beta}1_{[1,+\infty)}(r).
\end{split}
\end{equation}
Denote by $d$ (resp. $m$) the metric (resp. the measure) on $X$. Consider also a strongly local  regular Dirichlet form $(\calE,\calF)$ on $L^2(X;m)$. Let $V(x,r)$ denote the measure of the open ball with center $x\in X$ and radius $r\in(0,+\infty)$. The assumption V($\Phi$) means that $V(x,r)\asymp \Phi(r)$.

To establish pointwise gradient estimate for the heat kernel, we need to assume a pointwise upper bound for the heat kernel itself, that is, say that UHK($\Psi$) holds if
$$
p_t(x,y)\le\frac{1}{V\left(x,\Psi^{-1}(C_1t)\right)}\exp\left(-\Upsilon\left(C_2d(x,y),t\right)\right),
$$
where
$$
\Upsilon(R,t)=\sup_{s\in(0,+\infty)}\left(\frac{R}{s}-\frac{t}{\Psi(s)}\right),
$$
and a reverse H\"older inequality for the gradients of harmonic functions. Recall that such inequalities play a key role in the study of the $L^p$-boundedness of the Riesz transform for $p>2$, see \cite{AC05,BF16,CJKS20}. The classical statement for the reverse H\"older inequality is as follows: say that the reverse H\"older inequality for the gradients of harmonic functions holds if there exists $C_H\in(0,+\infty)$ such that for any ball $B$ with radius $r$, for any function $u$ harmonic in $2B$, we have 
$$\left\Vert \left\vert \nabla u\right\vert\right\Vert_{L^{\infty}(B)}\le\frac{C_H}r\dashint_{2B} \left\vert u\right\vert\md m$$
(the precise definition of $\left\vert \nabla u\right\vert$ will be given at the end of Section \ref{sec_MMD}). The reverse H\"older inequality holds on the $N$-dimensional Vicsek cable system for any $N\ge 2$, see Proposition \ref{prop_RH_Vicsek}. However, this condition does not hold on the Sierpi\'nski cable system, see Proposition \ref{prop_RH}. This leads us to introduce a new condition, called a generalized reverse H\"older inequality for the gradients of harmonic functions, which also holds on the Vicsek cable systems, see Proposition \ref{prop_GRH}. More precisely, say that the generalized reverse H\"older inequality GRH($\Phi,\Psi$) holds if there exists $C_H\in(0,+\infty)$ such that for any ball $B$ with radius $r$, for any function $u$ harmonic in $2B$, we have
$$\mynorm{|\nabla u|}_{L^\infty(B)}\le C_H\frac{\Phi(r)}{\Psi(r)}\dashint_{2B}|u|\md m.$$

Our first main result states as follows.

\begin{mythm}\label{thm_grad}
Let $(X,d,m,\calE,\calF)$ be an unbounded cable system satisfying \ref{eqn_VPhi}, \ref{eqn_UHK} and \ref{eqn_GRH}. We have the gradient estimate \ref{eqn_GHK} for the heat kernel as follows: there exist $C_1,C_2,C_3,C_4\in(0,+\infty)$ such that for any $t\in(0,+\infty)$, for $m$-a.e. $x,y\in X$, we have
\begin{align}
|\nabla_yp_t(x,y)|&\le\frac{\Phi(\Psi^{-1}(t))}{tV\left(x,\Psi^{-1}(C_1t)\right)}\exp\left(-\Upsilon\left(C_2d(x,y),t\right)\right)\nonumber\\
&\le
\begin{cases}
\frac{C_3}{\sqrt{t}V(x,\sqrt{t})}\exp\left(-C_4\frac{d(x,y)^2}{t}\right),&\text{if }t\in(0,1),\\
\frac{C_3}{t^{1-\frac{\alpha}{\beta}}V(x,t^{1/\beta})}\exp\left(-C_4\left(\frac{d(x,y)}{t^{{1}/{\beta}}}\right)^{\frac{\beta}{\beta-1}}\right),&\text{if }t\in[1,+\infty).
\end{cases}
\label{eqn_GHK}\tag*{GHK($\Phi,\Psi$)}
\end{align}
In particular, \ref{eqn_GHK} holds on the $N$-dimensional Vicsek cable system with $\alpha=\log(2^N+1)/\log3$, $\beta=\log(3\cdot(2^N+1))/\log3$ for any $N\ge 2$ and on the Sierpi\'nski cable system with $\alpha=\log3/\log2$ and $\beta=\log5/\log2$.
\end{mythm}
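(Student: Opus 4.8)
The plan is to derive the pointwise gradient bound \ref{eqn_GHK} from \ref{eqn_UHK} and \ref{eqn_GRH} via a standard ``heat-kernel-is-harmonic-in-space'' argument combined with the semigroup property and a Caccioppoli-type estimate adapted to the anisotropic scaling function $\Psi$. Fix $t>0$ and $x,y\in X$ and write $u(\cdot)=p_t(x,\cdot)$; since $p_t(x,\cdot)$ is not harmonic, I would instead split time: write $p_t(x,y)=\int_X p_{t/2}(x,z)p_{t/2}(z,y)\,dm(z)$ and differentiate in $y$ under the integral, so that $\nabla_y p_t(x,y)=\int_X p_{t/2}(x,z)\,\nabla_y p_{t/2}(z,y)\,dm(z)$. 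Thus it suffices to bound $|\nabla_y p_{t/2}(z,y)|$ and then re-integrate against the Gaussian/sub-Gaussian profile from \ref{eqn_UHK}, which reproduces a bound of the same shape with adjusted constants (the $\Upsilon$-profile satisfies the appropriate chaining/convolution inequality, exactly as in the scalar UHK situation).

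The core step is therefore the local estimate
\[
|\nabla_y p_s(z,y)|\ \lesssim\ \frac{\Phi(\Psi^{-1}(s))}{\Psi(\Psi^{-1}(s))}\ \dashint_{B(y,\Psi^{-1}(s))} |p_s(z,\cdot)|\,dm
\]
for $m$-a.e. $y$, where the RHS average is controlled by $\sup_{B}p_s(z,\cdot)$. Here $\Phi(\Psi^{-1}(s))/\Psi(\Psi^{-1}(s))\asymp \Phi(\Psi^{-1}(s))/s$ and the displayed mean-value-type inequality is obtained by applying \ref{eqn_GRH} — but $p_s(z,\cdot)$ is a \emph{caloric}, not harmonic, function, so one must first pass from the caloric function to a harmonic comparison function. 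The standard device is: on a ball $B=B(y,r)$ with $r=\Psi^{-1}(s)$, let $h$ be the harmonic function on $2B$ with the same boundary data as $p_s(z,\cdot)$ (solve the Dirichlet problem), apply \ref{eqn_GRH} to $h$, and control $|\nabla(p_s(z,\cdot)-h)|$ on $B$ by a Caccioppoli inequality together with a time-regularity (Cauchy-type) estimate $\|\partial_s p_s(z,\cdot)\|$, which in turn follows from \ref{eqn_UHK} applied at a comparable time via the analyticity of the heat semigroup. All the error terms, when re-expressed in $z,y$ and integrated as in the first paragraph, are absorbed into a bound of the form $\frac{\Phi(\Psi^{-1}(t))}{tV(x,\Psi^{-1}(C_1t))}\exp(-\Upsilon(C_2 d(x,y),t))$.

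The second inequality in \ref{eqn_GHK} is then just unwinding the definitions of $\Phi,\Psi,\Upsilon$ in the two regimes: for $t\in(0,1)$ we have $\Psi(r)=r^2$ on the relevant range so $\Psi^{-1}(t)=\sqrt t$, $\Phi(\Psi^{-1}(t))=\sqrt t$, hence the prefactor is $\asymp 1/(\sqrt t\,V(x,\sqrt t))$, and $\Upsilon(R,t)\asymp R^2/t$ gives the Gaussian term; for $t\ge 1$ we have $\Psi(r)=r^\beta$, $\Psi^{-1}(t)=t^{1/\beta}$, $\Phi(\Psi^{-1}(t))=t^{\alpha/\beta}$, so $\Phi(\Psi^{-1}(t))/t=t^{\alpha/\beta-1}$, and $\Upsilon(R,t)\asymp (R/t^{1/\beta})^{\beta/(\beta-1)}$ yields the sub-Gaussian term. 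Finally, the concrete examples follow by checking the hypotheses: \ref{eqn_VPhi} and \ref{eqn_UHK} are known for the Vicsek and Sierpi\'nski cable systems with the stated $\alpha,\beta$ (small-time Gaussian / large-time sub-Gaussian heat kernel estimates), and \ref{eqn_GRH} holds by Proposition \ref{prop_GRH} in the Vicsek case; for the Sierpi\'nski cable system one invokes the corresponding verification of \ref{eqn_GRH} established earlier in the paper.

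The main obstacle I anticipate is precisely the passage from caloric to harmonic functions at the right scale: one needs the Caccioppoli/time-derivative estimate to produce an error that scales \emph{like} $\Phi(r)/\Psi(r)$ (not like the naive $1/r$), which forces a careful choice of the ball radius $r=\Psi^{-1}(s)$ and a careful bookkeeping of how \ref{eqn_UHK} controls $\|\partial_s p_s\|_{L^\infty}$ on the doubled ball — essentially one must verify that the anisotropy encoded by $\Psi$ (parabolic scaling $t\sim\Psi(r)$) is respected throughout, so that the final constant $C_H$ from \ref{eqn_GRH} is the only ``geometric'' input and everything else is universal.
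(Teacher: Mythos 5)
Your overall strategy is the right one in spirit (the paper also treats $p_t(x,\cdot)$ as a solution of a Poisson equation with right-hand side $f=-\partial_t p_t(x,\cdot)$, bounded via Davies' estimate, and applies \ref{eqn_GRH} at the scale $r=\Psi^{-1}(t)$), but your core step has a genuine gap: you propose to pass from the caloric function $p_s(z,\cdot)$ to a harmonic comparison $h$ on a single ball and to control $|\nabla(p_s(z,\cdot)-h)|$ on $B$ by ``a Caccioppoli inequality together with a time-regularity estimate.'' A Caccioppoli inequality only yields an $L^2$ (energy) bound on $\nabla(p_s(z,\cdot)-h)$, whereas \ref{eqn_GHK} is a pointwise bound. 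In this sub-Gaussian setting there is no elliptic regularity that upgrades an $L^2$ gradient bound to an $L^\infty$ one for a solution of $\Delta w=f$; the only $L^\infty$ gradient information available is \ref{eqn_GRH}, which applies to harmonic functions only, and the error $w=p_s(z,\cdot)-h$ is not harmonic. So the single-scale harmonic replacement does not close the argument.

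The paper resolves exactly this point by a multi-scale argument (Proposition \ref{prop_Poi_grad}, in the spirit of \cite{CJKS20,Jiang15}): one solves $\Delta u_k=f$ in the dyadic balls $B(y,2^k)$ (existence and $L^1$ control via Lemma \ref{lem_Poi_exist}, which needs \hyperlink{eqn_LSPsiq}{LS($\Psi,q$)}), writes a telescoping decomposition $u=(u-u_{k_1})+\sum_k(u_k-u_{k-1})+u_{k_0+3}$ in which every difference is \emph{harmonic} on its scale, applies \ref{eqn_GRH} to each of these differences, and treats the smallest-scale term by the pointwise estimate of Lemma \ref{lem_Poi_point}, which rests on the $L^1$-mean value inequality (Lemma \ref{lem_mv}) and hence on \ref{eqn_CS}. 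Note that \hyperlink{eqn_LSPsiq}{LS($\Psi,q$)} and \ref{eqn_CS} are legitimately available because \ref{eqn_UHK} together with \ref{eqn_VD} is equivalent to \ref{eqn_FK} plus \ref{eqn_CS} (Proposition \ref{prop_UHK}) and \ref{eqn_FK} is equivalent to \ref{eqn_LS} (Lemma \ref{lem_FKLS}); your proposal never invokes these ingredients, yet without them neither the Poisson solvability nor any pointwise control of the error term is accessible. The summation $\sum_{j\le[\log_2 r]}\Phi(2^j)\asymp\Phi(r)$ is then what produces the correct $\Phi(\Psi^{-1}(t))/t$ scaling you correctly anticipated; the remaining steps in your sketch (the off-diagonal case analysis via \ref{eqn_UHK}, the absorption of the exponential factor when $d(x,y)\lesssim\Psi^{-1}(t)$, and the two-regime unwinding of $\Phi,\Psi,\Upsilon$, as well as the verification of the hypotheses for the Vicsek and Sierpi\'nski cable systems) are fine and essentially match the paper.
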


To the best of our knowledge, this is the first pointwise upper bound for the gradient of the heat kernel in a sub-Gaussian type context.

\begin{myrmk}
{\em 
Arguing as in the proof of Theorem \ref{thm_grad}, we also prove the following result. For any $p\in(1,+\infty)$, there exist $\gamma,C\in(0,+\infty)$ such that for $m$-a.e. $y\in X$, we have
$$\left\Vert |\nabla p_t(\cdot,y)|\exp\left(\gamma\frac{d(\cdot,y)^2}{t}\right)\right\Vert_{p}\le\frac{C}{\sqrt{t}\left[V(y,\sqrt{t})\right]^{1-\frac{1}{p}}}\text{ if }t\in(0,1),$$
and
\begin{equation}\label{eq:Lpgrad1}
\left\Vert |\nabla p_t(\cdot,y)|\exp\left(\gamma\left(\frac{d(\cdot,y)}{t^{1/\beta}}\right)^{\frac{\beta}{\beta-1}}\right)\right\Vert_{p}\le\frac{C}{t^{1-\frac{\alpha}{\beta}}\left[V(y,t^{1/\beta})\right]^{1-\frac{1}{p}}}\text{ if }t\in[1,+\infty).
\end{equation}
For $p\in(1,2)$, it was proved in \cite[Lemma 2.2]{CCFR17} that, on any general Riemannian manifold or graph $M$ satisfying \ref{eqn_VD} and \ref{eqn_UHK} (no reverse H\"older inequality is required), the following weighted $L^p$-estimate for the gradient of the heat kernel holds: there exist $\gamma,C\in(0,+\infty)$ such that for any $y\in M$, we have
$$\left\Vert \left\vert\nabla p_t(\cdot,y)\right\vert \exp\left(\gamma\frac{d(\cdot,y)^2}{t}\right)\right\Vert_p\le\frac {C}{\sqrt{t}\left[V(y,\sqrt{t})\right]^{1-\frac{1}{p}}}\text{ if }t\in(0,1),$$
and
\begin{equation}\label{eq:Lpgrad2}
\left\Vert \left\vert \nabla p_t(\cdot,y)\right\vert \exp\left(\gamma\left(\frac{d(\cdot,y)}{t^{1/\beta}}\right)^{\frac{\beta}{\beta-1}}\right)\right\Vert_p\le\frac{C}{\sqrt{t}\left[V(y,t^{1/{\beta}})\right]^{1-\frac 1p}}\text{ if }t\in[1,+\infty).
\end{equation}
Since $2\alpha\ge \alpha+1\ge \beta$, which entails $1-\frac{\alpha}{\beta}\le\frac{1}{2}$, it follows that for $p\in(1,2)$, the conclusion of Theorem \ref{thm_grad} is weaker than the one of \cite[Lemma 2.2]{CCFR17}, that is, (\ref{eq:Lpgrad1}) is weaker than (\ref{eq:Lpgrad2}). On the other hand, (\ref{eq:Lpgrad2}) is limited to the range $p\in (1,2)$, and no pointwise estimate for the gradient of the heat kernel was proved in \cite{CCFR17}. In fact, it is unlikely that the corresponding pointwise estimate for the gradient of the heat kernel holds under \ref{eqn_VD} and \ref{eqn_UHK} only.
}
\end{myrmk}

From Theorem \ref{thm_grad}, we easily obtain another bound for $\left\vert\nabla p_t\right\vert$, assuming a two-sided bound for $p_t$. Namely, say that the heat kernel bound HK($\Psi$) holds if
\begin{footnotesize}
$$\frac{1}{V\left(x,\Psi^{-1}(C_1t)\right)}\exp\left(-\Upsilon\left(C_2d(x,y),t\right)\right)\le p_t(x,y)\le \frac{1}{V\left(x,\Psi^{-1}(C_3t)\right)}\exp\left(-\Upsilon\left(C_4d(x,y),t\right)\right).$$
\end{footnotesize}
Then, it is plain to deduce from Theorem \ref{thm_grad} the following result:
\begin{mycor}
Let $(X,d,m,\calE,\calF)$ be an unbounded cable system satisfying \ref{eqn_VPhi}, \hyperlink{eqn_HKPsi}{HK($\Psi$)} and \ref{eqn_GRH}. Then there exist $C_1,C_2\in(0,+\infty)$ such that for any $t\in(0,+\infty)$, for $m$-a.e. $x,y\in X$, we have
$$\left\vert \nabla_yp_t(x,y)\right\vert\le C_1\frac {\Phi(\Psi^{-1}(t))}{t}p_{C_2t}(x,y).$$
\end{mycor}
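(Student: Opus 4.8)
The plan is to read off the conclusion from Theorem~\ref{thm_grad} by pairing its gradient upper bound with the \emph{lower} heat kernel estimate contained in \hyperlink{eqn_HKPsi}{HK($\Psi$)}, after allowing ourselves to dilate the time variable. First I would write the bound of Theorem~\ref{thm_grad} in its first (unexpanded) form: there are $a_1,a_2\in(0,+\infty)$ such that, for all $t>0$ and $m$-a.e.\ $x,y\in X$,
\begin{equation*}
|\nabla_y p_t(x,y)|\le\frac{\Phi(\Psi^{-1}(t))}{t\,V\left(x,\Psi^{-1}(a_1 t)\right)}\exp\left(-\Upsilon\left(a_2 d(x,y),t\right)\right),
\end{equation*}
and recall that the first inequality in \hyperlink{eqn_HKPsi}{HK($\Psi$)}, applied at time $\kappa t$ for an as yet unspecified $\kappa\ge1$, gives $b_1,b_2\in(0,+\infty)$ with
\begin{equation*}
p_{\kappa t}(x,y)\ge\frac{1}{V\left(x,\Psi^{-1}(b_1\kappa t)\right)}\exp\left(-\Upsilon\left(b_2 d(x,y),\kappa t\right)\right).
\end{equation*}
Since the prefactor $\Phi(\Psi^{-1}(t))/t$ is common to the left-hand side of the claimed estimate and to the right-hand side of the target, it suffices to produce $\kappa\ge1$ and $C\in(0,+\infty)$, independent of $x,y,t$, with
\begin{equation}\label{eq:cor-goal}
\frac{1}{V\left(x,\Psi^{-1}(a_1 t)\right)}\exp\left(-\Upsilon\left(a_2 d(x,y),t\right)\right)\le\frac{C}{V\left(x,\Psi^{-1}(b_1\kappa t)\right)}\exp\left(-\Upsilon\left(b_2 d(x,y),\kappa t\right)\right);
\end{equation}
the corollary then holds with its ``$C_2$'' equal to $\kappa$ and its ``$C_1$'' equal to $C$.

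The exponential factors in \eqref{eq:cor-goal} I would control using three elementary properties of $\Upsilon$, all immediate from $\Upsilon(R,t)=\sup_{s>0}\left(\tfrac Rs-\tfrac t{\Psi(s)}\right)$ and from the weak scaling $\Psi(\lambda r)\le\lambda^{\beta}\Psi(r)$ for $\lambda\ge1$ (which holds for the $\Psi$ of \eqref{eqn_PhiPsi} precisely because $\beta\ge2$): namely $\Upsilon(R,\cdot)$ is nonincreasing, $\Upsilon(\cdot,t)$ is nondecreasing, and, substituting $s\mapsto\lambda s$ in the supremum,
\begin{equation*}
\Upsilon(\lambda R,t)\le\Upsilon\left(R,\lambda^{-\beta}t\right),\qquad\lambda\ge1.
\end{equation*}
Taking $\kappa:=\max\left\{1,(b_2/a_2)^{\beta}\right\}$, these combine to give $\Upsilon\left(b_2 d(x,y),\kappa t\right)\le\Upsilon\left(a_2 d(x,y),t\right)$ for all $x,y,t$: if $b_2\le a_2$ this follows from monotonicity alone (using $\kappa\ge1$), and if $b_2>a_2$ one writes $\Upsilon\left(b_2 d,\kappa t\right)=\Upsilon\left((b_2/a_2)\cdot a_2 d,\kappa t\right)\le\Upsilon\left(a_2 d,(b_2/a_2)^{-\beta}\kappa t\right)=\Upsilon\left(a_2 d,t\right)$. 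Hence the exponential on the left of \eqref{eq:cor-goal} does not exceed the one on the right.

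Finally I would handle the volume ratio $V\left(x,\Psi^{-1}(a_1 t)\right)/V\left(x,\Psi^{-1}(b_1\kappa t)\right)$. From $\Psi(\lambda r)\ge\lambda^{2}\Psi(r)$ for $\lambda\ge1$ (again read off from \eqref{eqn_PhiPsi}) one gets $\Psi^{-1}(\mu s)\le\sqrt{\mu}\,\Psi^{-1}(s)$ for $\mu\ge1$, so $\Psi^{-1}(a_1 t)$ and $\Psi^{-1}(b_1\kappa t)$ are comparable up to a constant depending only on $a_1,b_1,\kappa$; then \ref{eqn_VPhi} together with the doubling of $\Phi$ yields $V\left(x,\Psi^{-1}(a_1 t)\right)\le C\,V\left(x,\Psi^{-1}(b_1\kappa t)\right)$ with $C$ uniform in $x,t$. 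Substituting this bound into \eqref{eq:cor-goal} completes the argument. The proof is routine; the only step needing a moment's thought is the scaling inequality for $\Upsilon$ — one must check that the factor $\lambda^{\beta}$ lost in the spatial argument when passing from $a_2 d$ to $b_2 d$ is exactly recovered by enlarging the time by $\kappa=(b_2/a_2)^{\beta}$ — and there is no real obstacle beyond that bookkeeping.
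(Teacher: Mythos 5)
Your proposal is correct and follows essentially the same route the paper intends (the paper gives no separate argument, stating the corollary is ``plain to deduce'' from Theorem \ref{thm_grad} combined with the lower bound in HK($\Psi$)); the monotonicity and scaling properties of $\Upsilon$ and the comparability of $\Psi^{-1}(a_1t)$ with $\Psi^{-1}(b_1\kappa t)$ are exactly the needed bookkeeping. One small remark: the volume inequality actually required is $V\left(x,\Psi^{-1}(b_1\kappa t)\right)\le C\,V\left(x,\Psi^{-1}(a_1t)\right)$ rather than the direction you wrote, but your radius comparability together with \ref{eqn_VPhi} gives the two-sided comparison, so the argument stands.
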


Note that this is similar to the classical estimate for $\left\vert \nabla p_t\right\vert$ on Riemannian manifolds with non-negative Ricci curvature (with $C_1=C_2=1$ in this case), see \cite{LY86}.

The strategy of our proof is to use the fact that the heat kernel is a solution of the heat equation which can be regarded as a Poisson equation for a fixed time. Since the regularity of the time derivative of the heat kernel is easy to handle, one only needs to have gradient estimates for the solutions of Poisson equation; this has been the approach considered in \cite{JKY14,CJKS20}. In their settings, the local quantitative Lipschitz regularity for Cheeger-harmonic functions \cite[Theorem 3.1]{JKY14} or the reverse H\"older inequality for the gradients of harmonic functions \cite[Theorem 3.2]{CJKS20} which are consequences of some curvature assumptions was needed. In the present work, we rely on the approach through the generalized reverse H\"older inequality, see Proposition \ref{prop_Poi_grad}.

Our second main result is the $L^p$-boundedness of the ``quasi-Riesz transforms'' for any $p\in (1,+\infty)$ in the fractal setting. Recall that, under the volume doubling condition and the sub-Gaussian pointwise upper bound for the heat kernel, it was proved in \cite[Theorem 1.2]{CCFR17} that the Riesz transform $\nabla \Delta^{-1/2}$ is $L^p$-bounded for any $p\in(1,2]$. In \cite{ACDH04}, the $L^p$-boundedness of the Riesz transform for some values of $p>2$ was derived from an $L^p$-bound for the gradient of the heat semi-group, where the volume doubling condition and the scale-invariant $L^2$-Poincar\'e inequality are assumed to hold (recall that, in this situation, pointwise Gaussian upper and lower bounds for the heat kernel hold). This contrasts with the {\it sub-Gaussian} context. Indeed, in the case of the Vicsek cable systems, the Riesz transform is $L^p$-unbounded for any $p>2$, see \cite[Theorem 5.1]{CCFR17}, while a pointwise sub-Gaussian bound for the gradient of the heat kernel holds, as Theorem \ref{thm_grad} shows (note that an $L^2$-Poincar\'e inequality on balls, with a nonstandard scaling, still holds in this case).

Motivated by situations where the heat kernel satisfies sub-Gaussian upper bound, consider a weaker form of the Riesz transform, namely the {\it quasi-Riesz transform}, where the exponent ${1}/{2}$ is replaced by some $\varepsilon\in(0,{1}/{2})$. The quasi-Riesz transforms were introduced in \cite{Chen15}, as a substitute of the Riesz transform. We also rely on the decomposition of the Riesz transform as the sum of a local part and a global one (this idea goes back to \cite{A}), which was also pushed further in \cite{Chen15}. More precisely, if $p\in (1,+\infty)$, say that the local Riesz transform is $L^p$-bounded if and only if
$$\lVert |\nabla f|\rVert_p\lesssim \lVert (I+\Delta)^{{1}/{2}}f\rVert_p$$
and that the Riesz transform at infinity is $L^p$-bounded if and only if
$$\lVert|\nabla e^{-\Delta}f|\rVert_p\lesssim \lVert \Delta^{{1}/{2}}f\rVert_p.
$$
Then the Riesz transform $\nabla \Delta^{-1/2}$ is $L^p$-bounded if and only if the local Riesz transform and the Riesz transform at infinity are both $L^p$-bounded, see \cite[Theorem 2.3]{Chen15}. Moreover, the quasi-Riesz transform at infinity, namely $\nabla e^{-\Delta} \Delta^{-\veps}$, with $\veps\in(0,1/2)$, was proved in \cite[Theorem 1.2]{Chen15} to be $L^p$-bounded for any $p\in (1,2]$ on any complete Riemannian manifold, without any further assumption.

Inspired by the aforementioned equivalence for the $L^p$-boundedness of the Riesz transform and following the approach of \cite[Section 3]{ACDH04}, we establish the $L^p$-boundedness of the quasi-Riesz transforms for any $p\in (1,+\infty)$, as a consequence of the above gradient estimate for the heat kernel.

\begin{mythm}\label{thm_Riesz}
Let $(X,d,m,\calE,\calF)$ be an unbounded cable system satisfying the same assumptions as in Theorem \ref{thm_grad}. Then for any $\veps\in(0,1-\frac{\alpha}{\beta})$, the quasi-Riesz transform $\nabla(I+\Delta)^{-1/2}+\nabla e^{-\Delta}\Delta^{-\veps}$ is $L^p$-bounded for any $p\in(1,+\infty)$.
\end{mythm}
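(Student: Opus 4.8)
The plan is to split the quasi-Riesz transform into its two natural pieces, the \emph{local Riesz transform} $\nabla(I+\Delta)^{-1/2}$ and the \emph{quasi-Riesz transform at infinity} $\nabla e^{-\Delta}\Delta^{-\veps}$, and bound each separately on $L^p$ for all $p\in(1,+\infty)$. For $p\in(1,2]$ this is essentially already available: under \ref{eqn_VPhi} and \hyperlink{eqn_UHKPsi}{UHK($\Psi$)}, the arguments of \cite[Theorem 1.2]{CCFR17} (local part) and \cite[Theorem 1.2]{Chen15} (part at infinity, via the weighted $L^p$ gradient estimate \eqref{eq:Lpgrad2} recalled in the Remark) give $L^p$-boundedness with no reverse Hölder input. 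So the real content is the range $p\in(2,+\infty)$, where the gradient estimate \ref{eqn_GHK} from Theorem \ref{thm_grad} is the crucial new ingredient; I would treat this range by the Calderón--Zygmund-type argument of \cite[Section 3]{ACDH04}.

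Concretely, for $p>2$, first I would handle the \textbf{local part}. By analyticity of the semigroup and subordination, it suffices to estimate $\mynorm{\,|\nabla e^{-t\Delta}f|\,}_p$ for $t\in(0,1)$, and here the small-time case of \ref{eqn_GHK} is exactly the Gaussian bound $|\nabla_y p_t(x,y)|\lesssim t^{-1/2}V(x,\sqrt t)^{-1}\exp(-C d(x,y)^2/t)$. This is the same pointwise kernel bound used in the Euclidean/manifold setting, so the classical argument (e.g. interpolation against the trivial $L^2$ bound $\mynorm{|\nabla e^{-t\Delta}f|}_2\le C t^{-1/2}\mynorm f_2$, together with the on-diagonal heat kernel bound from V($\Phi$) and UHK($\Psi$) which yields the doubling and the $L^2$-$L^\infty$ ultracontractivity needed for the CZ decomposition) goes through verbatim, because on small scales the cable system looks Euclidean. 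One then writes $(I+\Delta)^{-1/2}=c\int_0^\infty e^{-t}e^{-t\Delta}\,t^{-1/2}\,\md t$ and uses the exponential weight $e^{-t}$ to absorb the large-$t$ contribution; this gives $\mynorm{|\nabla f|}_p\lesssim\mynorm{(I+\Delta)^{1/2}f}_p$.

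For the \textbf{part at infinity} with $p>2$, the point is to exploit the restriction $\veps<1-\frac\alpha\beta$. Write $\nabla e^{-\Delta}\Delta^{-\veps}=c\int_0^\infty \nabla e^{-(1+t)\Delta}\,t^{\veps-1}\,\md t$, so that the kernel of $\nabla e^{-\Delta}\Delta^{-\veps}$ is $c\int_0^\infty \nabla_y p_{1+t}(x,y)\,t^{\veps-1}\,\md t$. The contribution of $t\in(0,1)$ is harmless (it is a bounded-time local-type piece, controlled as above, and $t^{\veps-1}$ is integrable near $0$). For $t\ge1$, the large-time branch of \ref{eqn_GHK} gives $|\nabla_y p_{1+t}(x,y)|\lesssim t^{-(1-\alpha/\beta)}V(x,t^{1/\beta})^{-1}\exp(-C(d(x,y)/t^{1/\beta})^{\beta/(\beta-1)})$, and integrating $t^{\veps-1}t^{-(1-\alpha/\beta)}$ over $[1,\infty)$ converges precisely because $\veps-(1-\alpha/\beta)<0$. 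Thus the kernel $K(x,y)$ of $\nabla e^{-\Delta}\Delta^{-\veps}$ is pointwise dominated by $\int_1^\infty t^{\veps-1-(1-\alpha/\beta)}V(x,t^{1/\beta})^{-1}\exp(-C(d(x,y)/t^{1/\beta})^{\beta/(\beta-1)})\,\md t$, which one checks satisfies a (sub-Gaussian-scaled) Calderón--Zygmund-type size and regularity bound; combined with the evident $L^2$-boundedness of $\nabla e^{-\Delta}\Delta^{-\veps}$ (immediate from $\mynorm{|\nabla e^{-\Delta}g|}_2\le C\mynorm g_2$ and spectral calculus, since $\Delta^{-\veps}e^{-\Delta/2}$ is $L^2$-bounded), the CZ machinery adapted to the doubling space $(X,d,m)$ yields weak-$(1,1)$ and hence, by interpolation and duality, $L^p$-boundedness for all $p\in(1,+\infty)$; for $p>2$ one may alternatively just run the Auscher--Coulhon--Duong duality argument directly with the gradient bound. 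Putting the two pieces together finishes the proof.

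The main obstacle I anticipate is the \textbf{large-time, $p>2$ analysis of the part at infinity}: one must verify that the time-integrated kernel $K(x,y)$ genuinely satisfies the Hörmander-type regularity estimate in this sub-Gaussian geometry (where balls scale like $\Psi^{-1}$, not like $\sqrt t$), which requires a gradient-in-$x$ bound, i.e. control of $|\nabla_x\nabla_y p_t|$ or, more cheaply, a Hölder-continuity-in-$x$ estimate for $\nabla_y p_t$. The cleanest route is to avoid second-order bounds altogether and instead mimic \cite[Section 3]{ACDH04}: replace the CZ decomposition of $f$ by a decomposition using the heat semigroup at the scale dictated by the relevant ball, estimate the ``bad'' part through the $L^2$ off-diagonal (Gaffney-type) bounds for $\nabla e^{-s\Delta}$ that follow from \ref{eqn_GHK} by integration, and use the doubling volume to sum the geometric series; the exponent condition $\veps<1-\frac\alpha\beta$ is exactly what makes the resulting series of powers of $2$ (or $3$) converge. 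A secondary technical point is to make sure the $p\in(1,2]$ cases really are covered by the cited results in the cable-system setting rather than only for manifolds/graphs — but, as the footnote to \cite{CCFR17} in the introduction notes, the methods transfer with only cosmetic changes, since the only structural inputs are V($\Phi$) and UHK($\Psi$).
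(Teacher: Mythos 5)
Your skeleton (local part plus part at infinity, with the exponent condition $\veps<1-\frac\alpha\beta$ driving the convergence of $\int_1^{+\infty}t^{\veps-1}t^{\frac\alpha\beta-1}\,\md t$) matches the paper, and your treatment of $p\in(1,2]$ is fine. The genuine gap is in your $p>2$ analysis of $\nabla e^{-\Delta}\Delta^{-\veps}$. The Calder\'on--Zygmund route you propose gives, from $L^2$-boundedness plus a H\"ormander-type regularity of the kernel in the integration variable, only weak-$(1,1)$ and hence $L^p$ for $1<p\le 2$; ``interpolation and duality'' does not then yield $p>2$, because duality transfers boundedness to the \emph{adjoint} operator (a divergence-type operator), not to $\nabla e^{-\Delta}\Delta^{-\veps}$ itself, and the regularity needed for the relevant variable amounts to control of mixed second derivatives $\nabla_x\nabla_yp_t$ (or a H\"older estimate for $\nabla_yp_t$ in $x$), which \ref{eqn_GHK} does not provide --- you acknowledge this, but your fallback (an ACDH-style semigroup decomposition with Gaffney bounds) is left entirely unverified, and it is precisely the hard part of your argument.

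The paper shows this obstacle simply does not arise: the pointwise bound \ref{eqn_GHK}, together with \ref{eqn_VD}, upgrades by a weighted H\"older/Schur argument to an operator-norm bound valid for \emph{every} $p\in(1,+\infty)$, namely $\mynorm{|\nabla e^{-t\Delta}|}_{p\to p}\lesssim t^{-1/2}$ for $t\in(0,1)$ and $\lesssim t^{-(1-\frac\alpha\beta)}$ for $t\ge1$ (Corollary \ref{cor_Gp}). With this in hand, the part at infinity is a one-line Minkowski estimate in the subordination formula $\nabla e^{-\Delta}\Delta^{-\veps}=\frac1{\Gamma(\veps)}\int_0^{+\infty}\nabla e^{-(1+t)\Delta}t^{\veps-1}\,\md t$, for all $p$ at once --- no kernel regularity, no CZ machinery, no duality. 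For the local part with $p>2$, the paper does not argue ``verbatim as in the Euclidean case'' but invokes \cite[Theorem 1.5]{ACDH04} (Lemma \ref{lem_Riesz_local2}), whose hypotheses are \ref{eqn_VD}, the local Poincar\'e inequality \ref{eqn_PI2loc} (available from \hyperlink{eqn_HKPsi}{HK($\Psi$)} via Proposition \ref{prop_HK}, an ingredient your sketch omits), and the bound $\mynorm{|\nabla e^{-t\Delta}|}_{p_0\to p_0}\le Ce^{t/2}/\sqrt t$, again supplied by Corollary \ref{cor_Gp} after absorbing $t^{\frac\alpha\beta-\frac12}$ into the exponential. So your proposal would need its $p>2$ part at infinity rebuilt around the operator-norm consequence of \ref{eqn_GHK} rather than kernel-level CZ theory.
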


An interesting open question is to know whether the conclusion of Theorem \ref{thm_Riesz} also holds for $\varepsilon=1-\frac{\alpha}{\beta}$. The exponent $\frac{\alpha}{\beta}$ in the gradient estimate for $t>1$ given by Theorem \ref{thm_grad} suggests that this is the case, but the result seems more difficult to prove.

We will see from the proof of Proposition \ref{prop_GRH} that these results extend to the cable systems corresponding to the p.c.f. self-similar sets considered in \cite{THP14} without any technical difficulty. However, it seems that more advanced techniques would be required to do the generalization to the cable system corresponding to the Sierpi\'nski carpet which is a non-p.c.f. self-similar set and an infinitely ramified fractal.

This paper is organized as follows. In Section \ref{sec_MMD}, we give some results about Poisson equation on metric measure Dirichlet spaces. In Section \ref{sec_cable}, we give formal constructions of cable systems including the Vicsek and the Sierpi\'nski ones. In Section \ref{sec_GRH}, we show that the reverse H\"older inequality holds on the Vicsek cable systems but does not hold on the Sierpi\'nski cable system, and we show that a generalized reverse H\"older inequality holds on the Vicsek and the Sierpi\'nski cable systems. In Section \ref{sec_grad}, we use the generalized reverse H\"older inequality to obtain gradient estimates for the solutions of Poisson equation using which we obtain gradient estimate for the heat kernel that is Theorem \ref{thm_grad}. In Section \ref{sec_Riesz}, we prove Theorem \ref{thm_Riesz}.

\noindent {\bf Acknowledgements:} This work was partly supported by the French ANR project RAGE ANR-18-CE40-0012. The third author was supported by national funds through the FCT – Funda\c{c}\~ao para a Ci\^encia e a Tecnologia, I.P. (Portuguese Foundation for Science and Technology) within the scope of the project UIDB/00297/2020 (Centro de Matem\'atica e Aplicações).

\section{Poisson Equation on Metric Measure Dirichlet Spaces}\label{sec_MMD}

Let $(X,d,m,\calE,\calF)$ be an unbounded metric measure Dirichlet (MMD) space, that is, $(X,d)$ is a locally compact separable unbounded metric space, $m$ is a positive Radon measure on $X$ with full support, $(\calE,\calF)$ is a strongly local regular Dirichlet form on $L^2(X;m)$. Throughout this paper, we always assume that all metric balls are precompact.

For any $x\in X$, for any $r\in(0,+\infty)$, denote the (metric) ball $B(x,r)=\{y\in X:d(x,y)<r\}$, denote $V(x,r)=m(B(x,r))$. If $B=B(x,r)$, then we denote $\delta B=B(x,\delta r)$ for any $\delta\in(0,+\infty)$. Let $C(X)$ denote the space of all real-valued continuous functions on $X$ and let $C_c(X)$ denote the space of all real-valued continuous functions on $X$ with compact support.

Consider the strongly local regular Dirichlet form $(\calE,\calF)$ on $L^2(X;m)$. Let $\Delta$ be the corresponding generator which is a non-negative definite self-adjoint operator. Let $\Gamma$ be the corresponding energy measure. Denote $\calE_1(\cdot,\cdot)=\calE(\cdot,\cdot)+(\cdot,\cdot)$, where $(\cdot,\cdot)$ is the inner product in $L^2(X;m)$. We refer to \cite{FOT11} for related results about Dirichlet forms.

Let us now present some geometric and functional conditions on $(X,d,m,\calE,\calF)$ which will be used in the sequel. Most of them will be expressed in terms of the two functions $\Phi$ and $\Psi$ given by Equation (\ref{eqn_PhiPsi}). 

We say that the volume doubling condition \ref{eqn_VD} holds if there exists $C_D\in(0,+\infty)$ such that
\begin{equation*}\label{eqn_VD}\tag*{VD}
V(x,2r)\le C_DV(x,r)\text{ for any }x\in X,\text{ for any }r\in(0,+\infty).
\end{equation*}

We say that the regular volume condition \ref{eqn_VPhi} holds if there exists $C_R\in(0,+\infty)$ such that
\begin{equation*}\label{eqn_VPhi}\tag*{V($\Phi$)}
\frac{1}{C_R}\Phi(r)\le V(x,r)\le C_R\Phi(r)\text{ for any }x\in X,\text{ for any }r\in(0,+\infty).
\end{equation*}
It is obvious that \ref{eqn_VPhi} implies \ref{eqn_VD}.

\subsection{Faber-Krahn, Sobolev and Poincar\'e Inequalities}

Let $D$ be an open subset of $X$. Denote by $\lambda_1(D)$ the smallest Dirichlet eigenvalue for $D$, that is,
$$\lambda_1(D)=\inf\myset{\frac{\calE(u,u)}{\mynorm{u}_2^2}:u\in\calF_D\backslash\myset{0}},$$
where
\begin{align*}
\calF_D&=\text{ the }\calE_1\text{-closure of }\calF\cap C_c(D).
\end{align*}

We say that the relative Faber-Krahn inequality \ref{eqn_FK} holds if there exist $C_F\in(0,+\infty)$ and $\nu\in(0,1)$ such that for any ball $B=B(x,r)$, for any open subset $D$ of $B$, we have
\begin{equation*}\label{eqn_FK}\tag*{FK($\Psi$)}
\lambda_1(D)\ge\frac{C_F}{\Psi(r)}\left(\frac{m(B)}{m(D)}\right)^\nu.
\end{equation*}

We say that the local Sobolev inequality \ref{eqn_LS} holds if there exist $C_L\in(0,+\infty)$ and $q\in(2,+\infty)$ such that for any ball $B=B(x,r)$, for any $u\in\calF_{B}$, we have
\begin{equation*}\label{eqn_LS}\tag*{LS($\Psi$)}
\left(\dashint_{B}|u|^q\md m\right)^{1/q}\le C_L\sqrt{\Psi(r)}\left(\frac{1}{m(B)}\calE(u,u)\right)^{1/2},
\end{equation*}
where $\dashint_B=\frac{1}{m(B)}\int_B$.

\begin{myrmk}
{\em 
We will sometimes use the notations \hypertarget{eqn_FKPsinu}{FK($\Psi,\nu$)} and \hypertarget{eqn_LSPsiq}{LS($\Psi,q$)} to emphasize the role of the values of $\nu$ and $q$, respectively.
}
\end{myrmk}

Actually, \ref{eqn_FK} and \ref{eqn_LS} are equivalent. More precisely:

\begin{mylem}\label{lem_FKLS}
Let $(X,d,m,\calE,\calF)$ be an unbounded MMD space. Then \ref{eqn_FK} is equivalent to \ref{eqn_LS} with $q=\frac{2}{1-\nu}$ or $\nu=1-\frac{2}{q}$.
\end{mylem}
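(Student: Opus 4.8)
The plan is to prove both implications of the equivalence by standard arguments relating Faber--Krahn inequalities to Nash-type/Sobolev inequalities, adapted to the scaling function $\Psi$ and localized to a fixed ball $B=B(x,r)$.

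\medskip

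\noindent\textbf{From \ref{eqn_LS} to \ref{eqn_FK}.} Suppose \ref{eqn_LS} holds with exponent $q$, and let $D$ be an open subset of $B=B(x,r)$. First I would take $u\in\calF_D\setminus\{0\}$; since $\calF_D\subseteq\calF_B$, the inequality \ref{eqn_LS} applies to $u$. The function $u$ is supported (up to $\calE_1$-closure) in $D$, so by H\"older's inequality on $D$ with exponents $q/2$ and $(q/2)'=q/(q-2)$,
\begin{equation*}
\int_B|u|^2\,\md m=\int_D|u|^2\,\md m\le\left(\int_D|u|^q\,\md m\right)^{2/q}m(D)^{1-2/q}.
\end{equation*}
Rewriting the left-hand side of \ref{eqn_LS} and inserting this bound yields
\begin{equation*}
\frac{1}{m(B)}\int_B|u|^2\,\md m\le C_L^2\,\Psi(r)\,\frac{\calE(u,u)}{m(B)}\left(\frac{m(D)}{m(B)}\right)^{1-2/q},
\end{equation*}
hence $\calE(u,u)/\|u\|_2^2\ge C_L^{-2}\Psi(r)^{-1}(m(B)/m(D))^{1-2/q}$. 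Taking the infimum over $u\in\calF_D\setminus\{0\}$ gives \ref{eqn_FK} with $\nu=1-2/q$ and $C_F=C_L^{-2}$.

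\medskip

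\noindent\textbf{From \ref{eqn_FK} to \ref{eqn_LS}.} This is the substantial direction. I would fix a ball $B=B(x,r)$ and a function $u\in\calF_B$, and apply the Faber--Krahn hypothesis to the super-level sets $D_\lambda=\{|u|>\lambda\}\cap B$ for $\lambda>0$. Using the truncated functions $(|u|-\lambda)_+\in\calF_{D_\lambda}$ (which lie in $\calF_B$ because $\calF$ is a Dirichlet form, hence stable under such truncations, and their energy is controlled by $\calE(u,u)$ via the Markovian property, $\calE((|u|-\lambda)_+,(|u|-\lambda)_+)\le\calE(u,u)$), the variational definition of $\lambda_1(D_\lambda)$ combined with \ref{eqn_FK} gives a differential inequality for the distribution function $\mu(\lambda)=m(D_\lambda)$. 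Concretely, the standard Faber--Krahn $\Rightarrow$ Nash/Sobolev machinery (see e.g.\ Grigor'yan's work, or \cite{CCFR17}-type arguments) produces, after integrating this inequality, a bound of the form
\begin{equation*}
\left(\dashint_B|u|^{2/(1-\nu)}\,\md m\right)^{1-\nu}\le C\,\Psi(r)\,\frac{\calE(u,u)}{m(B)},
\end{equation*}
which is exactly \ref{eqn_LS} with $q=2/(1-\nu)$. The role of $m(B)$ on both sides must be tracked carefully since \ref{eqn_FK} only provides the relative volume ratio $m(B)/m(D)$; this is what makes the two statements scale-consistent.

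\medskip

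\noindent The main obstacle is the \ref{eqn_FK}$\Rightarrow$\ref{eqn_LS} direction: one must carry out the level-set truncation argument entirely within the space $\calF_B$ (so that the eigenvalue $\lambda_1(D_\lambda)$ is the right one, defined via $\calF_{D_\lambda}$), verify that $(|u|-\lambda)_+$ indeed belongs to $\calF_{D_\lambda}$ and not just to $\calF_B$, and then organize the integration of the resulting distributional inequality so that the constants and the factor $\Psi(r)$ come out with the stated exponents. I expect this to be either cited from or mirror a known argument (Grigor'yan--Saloff-Coste, or the iteration in \cite{BCS15}-style references), with the only new feature being the use of the abstract scaling function $\Psi$ in place of $r^2$ or $r^\beta$ — which affects nothing structurally, since $\Psi(r)$ enters as a single positive scalar for each fixed ball.
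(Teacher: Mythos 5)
Your ``$\Leftarrow$'' direction (\ref{eqn_LS} $\Rightarrow$ \ref{eqn_FK}) is complete and is exactly the paper's argument. The ``$\Rightarrow$'' direction, which you correctly identify as the substantial one, is where your proposal has a genuine gap rather than a proof: the overall route (truncations on super-level sets, \ref{eqn_FK} applied to those sets, then a summation/integration) is the same as the paper's, but the two points you yourself flag as ``to be verified'' are precisely where the work lies, and as stated they do not go through. First, the claim $(|u|-\lambda)_+\in\calF_{D_\lambda}$ is unjustified and in general false: for an arbitrary $u\in\calF_B$ the set $\{|u|>\lambda\}$ need not even be open, and even for continuous $u$ the truncation is supported only in the closed set $\mybar{\{|u|>\lambda\}}$, which does not place it in the $\calE_1$-closure of $\calF\cap C_c(D_\lambda)$. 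The paper circumvents exactly this: it first restricts to nonnegative $u\in\calF\cap C_c(B)$, uses the regularity of $m$ to choose open sets $\mytilde{\Omega}_k$ with $\mybar{\Omega_k}\subseteq\mytilde{\Omega}_k\subseteq B$ and $m(\mytilde{\Omega}_k)\le 2m(\mybar{\Omega_k})$, applies \ref{eqn_FK} to $\mytilde{\Omega}_k$ (at the cost of a factor $2^\nu$), treats general signed $u\in\calF\cap C_c(B)$ via $|u|$, and only then passes to arbitrary $u\in\calF_B$ by $\calE_1$-density and Fatou. None of these reductions appear in your proposal.

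Second, the core analytic step --- converting the family of eigenvalue bounds on level sets into the $L^q$ estimate with the correct power of $\Psi(r)$ and the factor $m(B)$ --- is delegated entirely to unspecified ``standard Faber--Krahn $\Rightarrow$ Nash/Sobolev machinery''. In the paper this is a concrete dyadic computation: the energy is split over the annuli $\Omega_k\setminus\Omega_{k+1}$ using strong locality, one uses $\mynorm{u_k}_2^2\ge 4^k m_{k+1}$, and a discrete H\"older-type summation with the specific exponents $\alpha=1/\nu$, $p=1/(1-\nu)$ closes the estimate and produces the constant $C_L$ in terms of $C_F$. A distribution-function differential inequality of the kind you gesture at naturally yields a Nash-type inequality, and upgrading it to the stated Sobolev bound \hyperlink{eqn_LSPsiq}{LS($\Psi,q$)} requires precisely such an iteration/summation; since that is the content of the lemma, leaving it as a citation to an unnamed argument means the hard implication is not actually proved. (The presence of the abstract scaling $\Psi(r)$ is indeed harmless, as you note, since it enters as a fixed scalar per ball --- but that observation does not substitute for the missing computation.)
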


The fact that Faber-Krahn and Sobolev inequalities are equivalent in quite general contexts was already proved several times (see \cite{C96,BCLS}, for instance). Since the Faber-Krahn and the local Sobolev inequalities under consideration in the present paper involve the function $\Psi$, we provide a proof here for the sake of completeness.

\begin{proof}
The proof is inspired by \cite[Exercise 14.6]{Gri09}. ``$\Leftarrow$": Let $D$ be an open subset of a ball $B=B(x,r)$. For any $u\in\calF_D\backslash\myset{0}$, by \hyperlink{eqn_LSPsiq}{LS($\Psi,q$)}, we have
\begin{align*}
\mynorm{u}_2^2&=\int_D|u|^2\md m\le\left(\int_D|u|^{2\cdot\frac{q}{2}}\md m\right)^{\frac{2}{q}}\left(\int_D1\md m\right)^{1-\frac{2}{q}}=\left(\int_B|u|^{q}\md m\right)^{\frac{2}{q}}m(D)^{1-\frac{2}{q}}\\
&\le C_L^2\Psi(r)m(B)^{\frac{2}{q}-1}\calE(u,u)m(D)^{1-\frac{2}{q}}=C_L^2\Psi(r)\left(\frac{m(D)}{m(B)}\right)^{1-\frac{2}{q}}\calE(u,u),
\end{align*}
hence
$$\frac{\calE(u,u)}{\mynorm{u}_2^2}\ge\frac{1}{C_L^2\Psi(r)}\left(\frac{m(B)}{m(D)}\right)^{1-\frac{2}{q}}.$$
Taking the infimum with respect to $u\in\calF_D\backslash\myset{0}$, we have
$$\lambda_1(D)\ge\frac{1}{C_L^2\Psi(r)}\left(\frac{m(B)}{m(D)}\right)^{1-\frac{2}{q}},$$
that is, we have \hyperlink{eqn_FKPsinu}{FK($\Psi,\nu$)} with $\nu=1-\frac{2}{q}$ and $C_F=\frac{1}{C_L^2}$.

``$\Rightarrow$": Firstly, let $u\in\calF\cap C_c(B)$ be non-negative. Take $p\in(1,+\infty)$. For any $k\in\bbZ$, let $\Omega_k=\myset{u>2^k}$ and $m_k=m(\mybar{\Omega_k})$, then
$$\int_Bu^{2p}\md m=\sum_{k\in\bbZ}\int_{\Omega_k\backslash\Omega_{k+1}}u^{2p}\md m\le4^p\sum_{k\in\bbZ}4^{pk}m_k.$$

For any $k\in\bbZ$, let $u_k=\left(u-2^k\right)^+\wedge2^k$, then $u_k\in\calF\cap C_c(B)$ satisfies
$$\Gamma(u_k,u_k)|_{\Omega_k\backslash\Omega_{k+1}}=\Gamma(u,u)|_{\Omega_k\backslash\Omega_{k+1}},\Gamma(u_k,u_k)|_{X\backslash(\Omega_k\backslash\Omega_{k+1})}=0,$$
then
\begin{align*}
\calE(u,u)&=\sum_{k\in\bbZ}\int_{\Omega_k\backslash\Omega_{k+1}}\md\Gamma(u,u)=\sum_{k\in\bbZ}\int_{\Omega_k\backslash\Omega_{k+1}}\md\Gamma(u_k,u_k)\\
&=\sum_{k\in\bbZ}\int_{X}\md\Gamma(u_k,u_k)=\sum_{k\in\bbZ}\calE(u_k,u_k).
\end{align*}

For any $k\in\bbZ$, using the facts that $u\in C_c(B)$ and that $m$ is regular, choose an open set $\mytilde{\Omega}_k$ satisfying $\mybar{\Omega_k}\subseteq\mytilde{\Omega}_k\subseteq B$ and $m(\mytilde{\Omega}_k)\le 2m(\mybar{\Omega_k})=2m_k$, then $u_k\in\calF_{\mytilde{\Omega}_k}$. By \hyperlink{eqn_FKPsinu}{FK($\Psi,\nu$)}, we have
$$\lambda_1(\mytilde{\Omega}_k)\ge\frac{C_F}{\Psi(r)}\left(\frac{m(B)}{m(\mytilde{\Omega}_k)}\right)^\nu\ge\frac{C_F}{\Psi(r)}\left(\frac{m(B)}{2m_k}\right)^\nu,$$
hence
$$\calE(u_k,u_k)\ge\frac{C_F}{\Psi(r)}\left(\frac{m(B)}{2m_k}\right)^\nu\mynorm{u_k}^2_2,$$
hence
$$\calE(u,u)=\sum_{k\in\bbZ}\calE(u_k,u_k)\ge\sum_{k\in\bbZ}\frac{C_F}{\Psi(r)}\left(\frac{m(B)}{2m_k}\right)^\nu\mynorm{u_k}_2^2,$$
where
$$\mynorm{u_k}_2^2=\int_Xu_k^2\md m\ge\int_{\mybar{\Omega_{k+1}}}u_k^2\md m\ge4^km_{k+1},$$
hence
$$\calE(u,u)\ge\frac{C_Fm(B)^\nu}{2^\nu\Psi(r)}\sum_{k\in\bbZ}\frac{4^km_{k+1}}{m_k^\nu}.$$

Let $r,s>1$ satisfy $1/r+1/s=1$. For any positive sequences $\myset{x_k}$, $\myset{y_k}$, we have
$$\sum x_k\le\left(\sum x_k^{1/r}\right)^r\le\left(\sum\frac{x_k}{y_k}\right)\left(\sum y_k^{\frac{s}{r}}\right)^{\frac{r}{s}}.$$
Hence for any $\alpha\in(0,+\infty)$, we have
$$\sum x_k\le\left(\sum\frac{x_k}{y_k}\right)\left(\sum y_k^{\alpha}\right)^{\frac{1}{\alpha}}.$$
Therefore,
\begin{align*}
&\sum_{k\in\bbZ}4^{pk}m_k=4^p\sum_{k\in\bbZ}4^{pk}m_{k+1}\\
&\le4^p\left(\sum_{k\in\bbZ}\frac{4^{pk}m_{k+1}}{4^{(p-1)k}m_k^\nu}\right)\left(\sum_{k\in\bbZ}\left(4^{(p-1)k}m_k^\nu\right)^\alpha\right)^{\frac{1}{\alpha}}\\
&=4^p\left(\sum_{k\in\bbZ}\frac{4^{k}m_{k+1}}{m_k^\nu}\right)\left(\sum_{k\in\bbZ}4^{\alpha(p-1)k}m_k^{\alpha\nu}\right)^{\frac{1}{\alpha}}.
\end{align*}
Take $\alpha=\frac{1}{\nu}$ and $p:=\frac 1{1-\nu}$, so that $\alpha(p-1)=p$. Then
$$\sum_{k\in\bbZ}4^{pk}m_k\le4^p\left(\sum_{k\in\bbZ}\frac{4^{k}m_{k+1}}{m_k^\nu}\right)\left(\sum_{k\in\bbZ}4^{pk}m_k\right)^{1-\frac{1}{p}},$$
hence
$$\left(\sum_{k\in\bbZ}4^{pk}m_k\right)^{\frac{1}{p}}\le4^p\sum_{k\in\bbZ}\frac{4^{k}m_{k+1}}{m_k^\nu},$$
so
$$\int_{B}u^{2p}\md m\le4^p\sum_{k\in\bbZ}4^{pk}m_k\le4^p\left(4^p\sum_{k\in\bbZ}\frac{4^{k}m_{k+1}}{m_k^\nu}\right)^{p}\le4^p\left(4^p\frac{2^\nu\Psi(r)}{C_Fm(B)^\nu}\calE(u,u)\right)^{p},$$
that is,
$$\left(\dashint_{B}u^{2p}\md m\right)^{\frac{1}{2p}}\le\frac{2^{p+\frac{3}{2}-\frac{1}{2p}}}{\sqrt{C_F}}\sqrt{\Psi(r)}\left(\frac{1}{m(B)}\calE(u,u)\right)^{1/2}.$$
Letting $q=2p=\frac{2}{1-\nu}$, we have the desired result with $C_L=\frac{2^{\frac{q}{2}+\frac{3}{2}-\frac{1}{q}}}{\sqrt{C_F}}$.

Secondly, let $u\in\calF\cap C_c(B)$. Then $|u|\in\calF\cap C_c(B)$ is non-negative. By the first case, we have
\begin{align*}
\left(\dashint_{B}|u|^q\md m\right)^{1/q}&\le C_L\sqrt{\Psi(r)}\left(\frac{1}{m(B)}\calE(|u|,|u|)\right)^{1/2}\\
&\le C_L\sqrt{\Psi(r)}\left(\frac{1}{m(B)}\calE(u,u)\right)^{1/2}.
\end{align*}

Thirdly, let $u\in\calF_B$. Then there exists $\myset{u_k}\subseteq\calF\cap C_c(B)$ which is $\calE_1$-convergent to $u$, therefore there exists a subsequence still denoted by $\myset{u_k}$ that converges to $u$ $m$-a.e., hence
\begin{align*}
&\left(\dashint_{B}|u|^q\md m\right)^{1/q}\le\varliminf_{k\to+\infty}\left(\dashint_{B}|u_k|^q\md m\right)^{1/q}\\
&\le\varliminf_{k\to+\infty}C_L\sqrt{\Psi(r)}\left(\frac{1}{m(B)}\calE(u_k,u_k)\right)^{1/2}\\
&=C_L\sqrt{\Psi(r)}\left(\frac{1}{m(B)}\calE(u,u)\right)^{1/2}.
\end{align*}
\end{proof}

Let $U$, $V$ be two open subsets of $X$ satisfying $U\subseteq\mybar{U}\subseteq V$. We say that $\vphi\in\calF$ is a cutoff function for $U\subseteq V$ if $0\le\vphi\le1$ $m$-a.e., $\vphi=1$ $m$-a.e. in an open neighborhood of $\mybar{U}$ and $\mysupp(\vphi)\subseteq V$, where $\mysupp(f)$ refers to the support of the measure $|f|\md m$ for any given function $f$.

We say that the cutoff Sobolev inequality \ref{eqn_CS} holds if there exists $C_S\in(0,+\infty)$ such that for any $x\in X$, for any $R,r\in(0,+\infty)$, there exists a cutoff function $\vphi\in\calF$ for $B(x,R)\subseteq B(x,R+r)$ such that for any $f\in\calF$, we have
\begin{align*}
&\int_{B(x,R+r)\backslash\mybar{B(x,R)}}f^2\md\Gamma(\vphi,\vphi)\\
&\le\frac{1}{8}\int_{B(x,R+r)\backslash\mybar{B(x,R)}}\vphi^2\md\Gamma(f,f)+\frac{C_S}{\Psi(r)}\int_{B(x,R+r)\backslash\mybar{B(x,R)}}f^2\md m.\label{eqn_CS}\tag*{CS($\Psi$)}
\end{align*}

We say that the Poincar\'e inequality \ref{eqn_PI} holds if there exists $C_P\in(0,+\infty)$ such that for any ball $B=B(x,r)$, for any $u\in\calF$, we have
\begin{equation*}\label{eqn_PI}\tag*{PI($\Psi$)}
\int_{B}|u-u_B|^2\md m\le C_P\Psi(r)\int_{2B}\md\Gamma(u,u),
\end{equation*}
where $u_A$ is the mean value of $u$ on a measurable set $A$ with $m(A)\in(0,+\infty)$, that is,
$$u_A=\dashint_Au\md m=\frac{1}{m(A)}\int_Au\md m.$$

\subsection{Heat Kernel Estimates}\label{subsec_heat}

Consider the regular Dirichlet form $(\calE,\calF)$ on $L^2(X;m)$. Let $\myset{P_t}$ be the corresponding heat semi-group. Let $\myset{X_t,t\ge0,\bbP_x,x\in X\backslash\calN_0}$ be the corresponding Hunt process, where $\calN_0$ is a properly exceptional set, that is, $m(\calN_0)=0$ and $\bbP_x(X_t\in\calN_0\text{ for some }t>0)=0$ for any $x\in X\backslash\calN_0$. For any bounded Borel function $f$, we have $P_tf(x)=\bbE_xf(X_t)$ for any $t>0$, for any $x\in X\backslash\calN_0$.

The heat kernel $p_t(x,y)$ associated with the heat semi-group $\myset{P_t}$ is a measurable function defined on $(0,+\infty)\times(X\backslash\calN_0)\times(X\backslash\calN_0)$ satisfying that:
\begin{itemize}
\item For any bounded Borel function $f$, for any $t>0$, for any $x\in X\backslash\calN_0$, we have
$$P_tf(x)=\int_{X\backslash\calN_0}p_t(x,y)f(y)m(\md y).$$
\item For any $t,s>0$, for any $x,y\in X\backslash\calN_0$, we have
$$p_{t+s}(x,y)=\int_{X\backslash\calN_0}p_t(x,z)p_s(z,y)m(\md z).$$
\item For any $t>0$, for any $x,y\in X\backslash\calN_0$, we have $p_t(x,y)=p_t(y,x)$.
\end{itemize}
See \cite{GT12} for more details.

We say that the heat kernel upper bound \ref{eqn_UHK} holds if there exist a properly exceptional set $\calN$ and $C_1$, $C_2\in(0,+\infty)$ such that for any $t\in(0,+\infty)$, for any $x,y\in X\backslash\calN$, we have
\begin{equation*}\label{eqn_UHK}\tag*{UHK($\Psi$)}
p_t(x,y)\le\frac{1}{V\left(x,\Psi^{-1}(C_1t)\right)}\exp\left(-\Upsilon\left(C_2d(x,y),t\right)\right),
\end{equation*}
where
$$\Upsilon(R,t)=\sup_{s\in(0,+\infty)}\left(\frac{R}{s}-\frac{t}{\Psi(s)}\right)\asymp
\begin{cases}
\frac{R^2}{t},&\text{if }t<R,\\
\left(\frac{R}{t^{1/\beta}}\right)^{\frac{\beta}{\beta-1}},&\text{if }t\ge R.
\end{cases}$$
Then \ref{eqn_UHK} can also be re-written as follows:
$$p_t(x,y)\le
\begin{cases}
\frac{1}{V\left(x,\Psi^{-1}(C_1t)\right)}\exp\left(-C_2\frac{d(x,y)^2}{t}\right),&\text{if }t<d(x,y),\\
\frac{1}{V\left(x,\Psi^{-1}(C_1t)\right)}\exp\left(-C_2\left(\frac{d(x,y)}{t^{1/\beta}}\right)^{\frac{\beta}{\beta-1}}\right),&\text{if }t\ge d(x,y).
\end{cases}
$$
If a lower bound, similar to \ref{eqn_UHK}, with different constants $C_i$ also holds, then we say that \hypertarget{eqn_HKPsi}{HK($\Psi$)} holds. Note that, in \cite{GHL15}, a lower bound for $p_t$, called the near-diagonal lower bound \ref{eqn_NLE}, is written as
\begin{equation*}\label{eqn_NLE}\tag*{$(NLE)_{\Psi}$}
p_t(x,y)\ge\frac{c}{V(x,\Psi^{-1}(t))}
\end{equation*}
for any $t>0$ and $m$-a.e. $x,y\in X$ such that $d(x,y)\le \varepsilon \Psi^{-1}(t)$ where $c,\varepsilon>0$ are constants independent of $t,x,y$. But \cite[THEOREM 3.2]{BGK12} ensures that, if the metric $d$ is furthermore assumed to be geodesic, which is the case of cable systems, then the conjunction of \ref{eqn_UHK} and \ref{eqn_NLE} is equivalent to \hyperlink{eqn_HKPsi}{HK($\Psi$)}. 

Observe that the function $\beta\mapsto\left(\frac{d(x,y)}{t^{1/\beta}}\right)^{\frac{\beta}{\beta-1}}$ is monotone decreasing if $d(x,y)>t$ and monotone increasing if $d(x,y)\le t$. Indeed, 
$$
\log \left(\frac{d(x,y)}{t^{1/\beta}}\right)^{\frac{\beta}{\beta-1}}=\log d(x,y)+\frac 1{\beta-1}\log\frac{d(x,y)}t.
$$
Assume now that $d(x,y)\le t$, so that
$$
p_t(x,y)\le\frac{1}{V\left(x,\Psi^{-1}(C_1t)\right)}\exp\left(-C_2\left(\frac{d(x,y)}{t^{1/\beta}}\right)^{\frac{\beta}{\beta-1}}\right).
$$
Since $\beta\ge 2$, the aforementioned monotonicity therefore yields 
$$
\frac{d(x,y)^2}{t}\le\left(\frac{d(x,y)}{t^{1/\beta}}\right)^{\frac{\beta}{\beta-1}},
$$
which implies
$$
 p_t(x,y)\le\frac{1}{V\left(x,\Psi^{-1}(C_1t)\right)} \exp\left(-C_2\frac{d(x,y)^2}{t}\right). 
$$
Assume now that $t<d(x,y)$. Then, arguing similarly,
$$
\left(\frac{d(x,y)}{t^{1/\beta}}\right)^{\frac{\beta}{\beta-1}}\le \frac{d(x,y)^2}{t},
$$
therefore
$$
p_t(x,y)\le \frac{1}{V\left(x,\Psi^{-1}(C_1t)\right)} \exp\left(- C_2 \left(\frac{d(x,y)}{t^{1/\beta}}\right)^{\frac{\beta}{\beta-1}}\right).
$$
Thus, \ref{eqn_UHK} implies that
\begin{align}
p_t(x,y)&\le
\begin{cases}
\frac{1}{V\left(x,\Psi^{-1}(C_1t)\right)}\exp\left(-C_2\frac{d(x,y)^2}{t}\right),&\text{if }t\in(0,1),\\
\frac{1}{V\left(x,\Psi^{-1}(C_1t)\right)}\exp\left(-C_2\left(\frac{d(x,y)}{t^{1/\beta}}\right)^{\frac{\beta}{\beta-1}}\right),&\text{if }t\in[1,+\infty),
\end{cases}\nonumber\\
&\asymp
\begin{cases}
\frac{C_1}{V\left(x,\sqrt{t}\right)}\exp\left(-C_2\frac{d(x,y)^2}{t}\right),&\text{if }t\in(0,1),\\
\frac{C_1}{V\left(x,t^{1/\beta}\right)}\exp\left(-C_2\left(\frac{d(x,y)}{t^{1/\beta}}\right)^{\frac{\beta}{\beta-1}}\right),&\text{if }t\in[1,+\infty).
\end{cases}\label{eqn_hk}
\end{align}

One can characterize \ref{eqn_UHK} and \hyperlink{eqn_HKPsi}{HK($\Psi$)} in terms of functional inequalities as follows:

\begin{myprop}(\cite[Theorem 1.12]{AB15})\label{prop_UHK}
Let $(X,d,m,\calE,\calF)$ be an unbounded MMD space satisfying \ref{eqn_VD}. Then the followings are equivalent.
\begin{enumerate}[(1)]
\item \ref{eqn_FK} and \ref{eqn_CS}.
\item \ref{eqn_UHK}.
\end{enumerate}
\end{myprop}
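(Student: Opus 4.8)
\textbf{From \ref{eqn_FK} and \ref{eqn_CS} to \ref{eqn_UHK}.} This is \cite[Theorem 1.12]{AB15}; below I recall the structure of the proof (we only use the statement in what follows), the whole argument being organized around the $\Psi$-scaled space--time relation, with $\Upsilon$ the Legendre-type transform that makes this scaling compatible with the semigroup law. First I would convert \ref{eqn_FK} into an on-diagonal bound: by Lemma \ref{lem_FKLS} it is equivalent to \ref{eqn_LS}, and a Nash-type iteration with the space--time scaling dictated by $\Psi$ (Grigor'yan's Faber--Krahn $\Rightarrow$ ultracontractivity argument), together with \ref{eqn_VD}, yields $p_t(x,x)\le C/V(x,\Psi^{-1}(t))$ and, more generally, a near-diagonal upper estimate. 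Next I would use \ref{eqn_CS}: a good cutoff function for a ball $B(x,r)$, built from \ref{eqn_CS}, gives the mean-exit-time bound $\bbE_x[\tau_{B(x,r)}]\le C\Psi(r)$ (equivalently a survival-probability/Davies--Gaffney-type decay at scale $\Psi$), the factor $\tfrac18$ in \ref{eqn_CS} being exactly what lets one absorb the energy term. The last step combines the near-diagonal bound with the exit-time estimate by a Davies--Grigor'yan-type argument (an integrated maximum principle, with an appropriately chosen space--time weight whose optimization reproduces the exponent $\Upsilon(C\,d(x,y),t)$); \ref{eqn_VD} re-enters through the ball-chaining that turns the near-diagonal bound into a genuine on-diagonal one.

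\textbf{From \ref{eqn_UHK} to \ref{eqn_FK} and \ref{eqn_CS}.} Integrating \ref{eqn_UHK} gives $\mynorm{P_t}_{1\to\infty}\lesssim 1/\inf_x V(x,\Psi^{-1}(t))$, from which \ref{eqn_FK} follows via a Nash-type inequality by the classical Coulhon/Grigor'yan equivalence between ultracontractivity and Faber--Krahn inequalities (again using \ref{eqn_VD}). For \ref{eqn_CS} I would construct the cutoff function for $B(x,R)\subseteq B(x,R+r)$ as a renormalized equilibrium potential of the annulus --- e.g.\ a cutoff of $\int_0^{\Psi(r)}P^{B(x,R+r)}_s\mathbf{1}\,\md s$, where $P^{B(x,R+r)}_s$ is the semigroup killed outside $B(x,R+r)$, or the capacitary potential of $\overline{B(x,R)}$ in $B(x,R+r)$ --- and bound $\int f^2\,\md\Gamma(\varphi,\varphi)$ on the annulus using the Green-function and capacity estimates supplied by \ref{eqn_UHK}. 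This is the route of \cite{AB15}, close in spirit to the ``generalized capacity condition'' of Grigor'yan--Hu--Lau (cf.\ \cite{GHL15}).

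\textbf{Expected main obstacle.} The two genuinely delicate points are, in the forward direction, verifying that $\Upsilon$ is reproduced without loss under the semigroup law and under the ball-chaining (so that the sub-Gaussian exponent is sharp and no logarithmic factor creeps in), and, in the converse, manufacturing cutoff functions whose energy measure on the annulus satisfies the sharp estimate with the self-improvement constant $\tfrac18$ starting only from \ref{eqn_UHK}. Since both are carried out in \cite{AB15}, I will simply invoke that theorem.
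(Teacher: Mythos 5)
Your proposal is correct and matches the paper exactly: the paper gives no independent argument for this proposition and simply cites \cite[Theorem 1.12]{AB15}, which is precisely what you do (your sketch of the Faber--Krahn/cutoff-Sobolev machinery is only expository and is not relied upon). Nothing further is needed.
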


\begin{myprop}(\cite[THEOREM 1.2]{GHL15})\label{prop_HK}
Let $(X,d,m,\calE,\calF)$ be an unbounded geodesic MMD space satisfying \ref{eqn_VPhi}. Then the followings are equivalent.
\begin{enumerate}[(1)]
\item \ref{eqn_PI} and \ref{eqn_CS}.
\item \hyperlink{eqn_HKPsi}{HK($\Psi$)}.
\end{enumerate}
\end{myprop}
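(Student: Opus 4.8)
The plan is to obtain this as a consequence of Proposition \ref{prop_UHK}, of a functional characterization of the near-diagonal lower estimate \ref{eqn_NLE}, and of the fact recalled above that, on a geodesic space, \ref{eqn_UHK} together with \ref{eqn_NLE} is equivalent to \hyperlink{eqn_HKPsi}{HK($\Psi$)} (\cite[Theorem 3.2]{BGK12}). Since this is precisely \cite[Theorem 1.2]{GHL15}, I will only describe the architecture of the argument rather than carrying it out.

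For the implication $(1)\Rightarrow(2)$: since \ref{eqn_VPhi} implies \ref{eqn_VD}, it is enough to produce \ref{eqn_UHK} and \ref{eqn_NLE}. For \ref{eqn_UHK} I would invoke Proposition \ref{prop_UHK}, which reduces the task to \ref{eqn_FK}; and \ref{eqn_FK} can be extracted from \ref{eqn_VD} and \ref{eqn_PI} (the space being geodesic) by a telescoping, Whitney-type argument turning the scaled Poincar\'e inequality into a Nash/Faber--Krahn inequality at scale $\Psi(r)$, after which Lemma \ref{lem_FKLS} and Proposition \ref{prop_UHK} give \ref{eqn_UHK}. For \ref{eqn_NLE}, the idea is that \ref{eqn_CS} forces the mean exit time of the associated process from a ball $B(x,r)$ to be comparable to $\Psi(r)$; together with \ref{eqn_VD} and \ref{eqn_PI} this should yield an elliptic and then a parabolic Harnack inequality at the space--time scaling $t\asymp\Psi(r)$ (Moser iteration using the cutoff functions supplied by \ref{eqn_CS}), whence \ref{eqn_NLE} by applying the parabolic Harnack inequality to $t\mapsto p_t(x,\cdot)$. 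Finally, the space is geodesic by hypothesis, so \cite[Theorem 3.2]{BGK12} upgrades \ref{eqn_UHK} and \ref{eqn_NLE} to \hyperlink{eqn_HKPsi}{HK($\Psi$)}.

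For the implication $(2)\Rightarrow(1)$: assuming \hyperlink{eqn_HKPsi}{HK($\Psi$)}, I would obtain \ref{eqn_PI} along classical lines, splitting $\int_B|u-u_B|^2\md m$ into $\int_B|u-P_{\Psi(r)}u|^2\md m$, controlled by $\Psi(r)\calE(u,u)$ via the spectral estimate $\mynorm{u-P_su}_2^2\le s\,\calE(u,u)$, plus a remainder handled by the on-diagonal part of \hyperlink{eqn_HKPsi}{HK($\Psi$)} (equivalently, deduce the parabolic Harnack inequality first and read off \ref{eqn_PI}). The cutoff Sobolev inequality \ref{eqn_CS} is the delicate half: I would show that \hyperlink{eqn_HKPsi}{HK($\Psi$)} implies the exit-time estimate $\bbE_x\tau_{B(x,r)}\asymp\Psi(r)$, which under \ref{eqn_VD} is equivalent to the generalized capacity estimate $\mathrm{cap}(B(x,r),B(x,2r))\asymp V(x,r)/\Psi(r)$, itself equivalent to \ref{eqn_CS}; this last equivalence is where most of the effort in \cite{GHL15} is invested.

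The hardest part is the implication $(1)\Rightarrow(2)$, and within it the passage from \ref{eqn_PI} and \ref{eqn_CS} to the near-diagonal lower estimate \ref{eqn_NLE}: one must turn purely functional information into a pointwise lower bound for $p_t$, which forces one through a parabolic Harnack inequality with the correct sub-Gaussian scaling, and this step carries the bulk of the technical difficulty.
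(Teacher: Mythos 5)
The paper gives no proof of this proposition at all: it is imported verbatim as \cite[THEOREM 1.2]{GHL15}, and your proposal likewise defers the substance to that reference, so you are taking essentially the same approach as the paper. Your sketched architecture (reduce to \ref{eqn_UHK} via Proposition \ref{prop_UHK} and a Faber--Krahn inequality, obtain \ref{eqn_NLE} from \ref{eqn_PI} and \ref{eqn_CS}, upgrade to \hyperlink{eqn_HKPsi}{HK($\Psi$)} by \cite[THEOREM 3.2]{BGK12} on geodesic spaces, and argue the converse through exit-time/capacity estimates) is consistent with the route taken in the cited literature and with the remarks in Subsection \ref{subsec_heat}.
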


\begin{myrmk}
{\em 
On any complete non-compact Riemannian manifold, \ref{eqn_CS} with $\beta=2$ (that is, $\Psi(r)=r^2$ for any $r\in(0,+\infty)$) holds automatically, so that the above equivalences hold without \ref{eqn_CS} and are classical, see \cite{Gri92,Sal92,Gri94}. However, on a general MMD space, \ref{eqn_CS} does not always hold and is involved in the formulation of the previous equivalences. Moreover, \ref{eqn_CS} is directly used in the present paper to provide an $L^1$-mean value inequality in Lemma \ref{lem_mv} below.
}
\end{myrmk}

\subsection{The Poisson Equation}\label{subsec_Poi}

Let $D$ be an open subset of $X$. Let $f\in L^1_{loc}(D)$. We say that $u\in\calF$ is a solution of the Poisson equation $\Delta u=f$ in $D$ if
$$\calE(u,\vphi)=\int_Df\vphi\md m\text{ for any }\vphi\in\calF\cap C_c(D).$$
If $\Delta u=f$ in $D$ with $f\in L^2(D)$, then the above equation also holds for any $\vphi\in\calF_D$. We say that $u\in\calF$ is harmonic in $D$ if $\Delta u=0$ in $D$.

We have some results about the existence, the uniqueness and the regularity of the solutions of Poisson equation, that we now state and prove.

\begin{mylem}\label{lem_Poi_exist}
Let $(X,d,m,\calE,\calF)$ be an unbounded MMD space satisfying \hyperlink{eqn_LSPsiq}{LS($\Psi,q$)}. Then for any $p\in\left[\frac{q}{q-1},+\infty\right)$, for any ball $B=B(x_0,r)$, for any $f\in L^p(B)$, there exists a unique \footnote{in the sense that if $u_1,u_2\in\calF_B$ satisfy $\Delta u_1=\Delta u_2=f$ in $B$, then $u_1=u_2$ $m$-a.e..} $u\in\calF_B$ such that $\Delta u=f$ in $B$. There exists $C\in(0,+\infty)$ depending only on $C_L$ such that
$$\dashint_B|u|\md m\le C\sqrt{\Psi(r)}\left(\frac{1}{m(B)}\calE(u,u)\right)^{1/2}\le C\Psi(r)\left(\dashint_B|f|^p\md m\right)^{1/p}.$$
\end{mylem}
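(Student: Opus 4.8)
The plan is to obtain $u$ by the Lax–Milgram / Riesz representation method applied to the Hilbert space $\calF_B$ equipped with the inner product $\calE(\cdot,\cdot)$, and then to convert the energy bound into the two displayed estimates using the local Sobolev inequality \hyperlink{eqn_LSPsiq}{LS($\Psi,q$)} and duality. First I would check that $(\calF_B,\calE)$ is genuinely a Hilbert space: since every metric ball is precompact and $X$ is unbounded, $\calF_B$ embeds into $L^2(B)$ and in fact, by \hyperlink{eqn_LSPsiq}{LS($\Psi,q$)} applied on the fixed ball $B$, one has $\mynorm{v}_{L^q(B)}\le C_L\sqrt{\Psi(r)}\,m(B)^{-1/2}\calE(v,v)^{1/2}$ for $v\in\calF_B$; combined with Hölder this gives $\mynorm{v}_2\le C(B)\,\calE(v,v)^{1/2}$, so $\calE$ is a norm on $\calF_B$ equivalent to a complete one, and $\calF_B$ is complete by definition (it is the $\calE_1$-closure of $\calF\cap C_c(B)$). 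Hence $(\calF_B,\calE)$ is a Hilbert space.

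Next I would check that $\vphi\mapsto -\int_B f\vphi\,\md m$ is a bounded linear functional on $(\calF_B,\calE)$. For $f\in L^p(B)$ with $p\ge \frac{q}{q-1}=q'$, Hölder gives $\bigl|\int_B f\vphi\,\md m\bigr|\le \mynorm{f}_{L^p(B)}\mynorm{\vphi}_{L^{p'}(B)}$, and since $p'\le q$ and $B$ has finite measure, $\mynorm{\vphi}_{L^{p'}(B)}\lesssim \mynorm{\vphi}_{L^q(B)}\lesssim \sqrt{\Psi(r)}\,m(B)^{-1/2}\calE(\vphi,\vphi)^{1/2}$ by \hyperlink{eqn_LSPsiq}{LS($\Psi,q$)} again. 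So the functional is bounded, and the Riesz representation theorem produces a unique $u\in\calF_B$ with $\calE(u,\vphi)=\int_B f\vphi\,\md m$ for all $\vphi\in\calF\cap C_c(B)$ (by density, for all $\vphi\in\calF_B$); this is exactly the statement that $\Delta u=f$ in $B$ and $u\in\calF_B$. Uniqueness is immediate: if $\Delta u_1=\Delta u_2=f$ then $\calE(u_1-u_2,\vphi)=0$ for all $\vphi\in\calF_B$, and taking $\vphi=u_1-u_2$ gives $\calE(u_1-u_2,u_1-u_2)=0$, hence $u_1=u_2$ $m$-a.e. by the norm property just established.

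Finally I would derive the quantitative bound. Testing the equation with $\vphi=u$ gives $\calE(u,u)=\int_B f u\,\md m\le \mynorm{f}_{L^p(B)}\mynorm{u}_{L^{p'}(B)}$. Bounding $\mynorm{u}_{L^{p'}(B)}\le m(B)^{\frac1{p'}-\frac1q}\mynorm{u}_{L^q(B)}$ and applying \hyperlink{eqn_LSPsiq}{LS($\Psi,q$)} to $u\in\calF_B$ yields $\calE(u,u)\le C\,m(B)\,\Psi(r)\bigl(\dashint_B|f|^p\md m\bigr)^{2/p}$ after collecting the powers of $m(B)$; taking square roots gives the right-hand inequality $\sqrt{\Psi(r)}\bigl(\tfrac1{m(B)}\calE(u,u)\bigr)^{1/2}\le C\,\Psi(r)\bigl(\dashint_B|f|^p\md m\bigr)^{1/p}$. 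For the left-hand inequality, $\dashint_B|u|\,\md m\le \bigl(\dashint_B|u|^q\md m\bigr)^{1/q}\le C_L\sqrt{\Psi(r)}\bigl(\tfrac1{m(B)}\calE(u,u)\bigr)^{1/2}$ is just Jensen plus \hyperlink{eqn_LSPsiq}{LS($\Psi,q$)}. I do not expect a serious obstacle here; the only point requiring a little care is the bookkeeping of the exponents of $m(B)$ and the verification that $p'\le q$ exactly when $p\ge q/(q-1)$, so that \hyperlink{eqn_LSPsiq}{LS($\Psi,q$)} can legitimately be invoked — the endpoint $p=q/(q-1)$ is where $p'=q$ and the embedding is used at full strength.
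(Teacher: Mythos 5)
Your argument is correct, and its core is the same as the paper's: make $(\calF_B,\calE)$ a Hilbert space via \hyperlink{eqn_LSPsiq}{LS($\Psi,q$)}, solve by Riesz representation, and convert the energy identity into the two displayed bounds through H\"older and \hyperlink{eqn_LSPsiq}{LS($\Psi,q$)}, with the exponent bookkeeping resting exactly on $p'\le q$. Where you genuinely diverge is in the existence step for general $f\in L^p(B)$: the paper first solves the problem for $f\in L^2(B)$ and then treats $f\in L^p(B)$ by truncating, $f_k=(f\vee(-k))\wedge k$, solving for each $f_k$ and showing the solutions $u_k$ form an $\calE$-Cauchy sequence before passing to the limit; you instead observe that the functional $\vphi\mapsto\int_B f\vphi\,\md m$ is already bounded on $(\calF_B,\calE)$ for $f\in L^p(B)$, since $\vphi\in L^q(B)\subseteq L^{p'}(B)$ by \hyperlink{eqn_LSPsiq}{LS($\Psi,q$)} and the finiteness of $m(B)$, so Riesz representation applies in one stroke. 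This is a legitimate simplification (the paper's detour through $L^2$, inherited from its model \cite{CJKS20}, is not forced by the definition of a solution, which only tests against $\calF\cap C_c(B)$), at the mild cost that you should note explicitly, as you implicitly do, that the identity $\calE(u,\vphi)=\int_Bf\vphi\,\md m$ obtained from Riesz holds for all $\vphi\in\calF_B$ and hence in particular for the test class $\calF\cap C_c(B)$. Your uniqueness argument also differs: the paper deduces it from the $L^1$-estimate, while you extend $\calE(u_1-u_2,\cdot)=0$ from $\calF\cap C_c(B)$ to $\calF_B$ by $\calE_1$-density and Cauchy--Schwarz and test with $u_1-u_2$; both are valid, and yours is the more standard variational route.
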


\begin{proof}
The proof is inspired by \cite[Lemma 2.6]{CJKS20}. First, we prove the existence. By \hyperlink{eqn_LSPsiq}{LS($\Psi,q$)}, for any $\vphi\in\calF_{B}$, we have
\begin{align*}
&\left(\int_{B}|\vphi|^2\md m\right)^{1/2}\le m(B)^{1/2}\left(\dashint_{B}|\vphi|^q\md m\right)^{1/q}\\
&\le m(B)^{1/2}C_L\sqrt{\Psi(r)}\left(\frac{1}{m(B)}\calE(\vphi,\vphi)\right)^{1/2}=C_L\sqrt{\Psi(r)}\calE(\vphi,\vphi)^{1/2},
\end{align*}
hence $(\calF_{B},\calE)$ is a Hilbert space.

We split the rest of the proof of the existence part into two steps. To start with, we assume that $f\in L^2(B)$. For any $\vphi\in\calF_B$, since
\begin{align*}|
&\int_{B}f\vphi\md m|\le\left(\int_{B}|f|^2\md m\right)^{1/2}\left(\int_{B}|\vphi|^2\md m\right)^{1/2}\le \left(\int_{B}|f|^2\md m\right)^{1/2}C_L\sqrt{\Psi(r)}\calE(\vphi,\vphi)^{1/2},
\end{align*}
we have $\vphi\mapsto\int_{B}f\vphi\md m$ is a bounded linear functional on $(\calF_{B},\calE)$. By the Riesz representation theorem, there exists a unique $u\in\calF_{B}$ such that $\calE(u,\vphi)=\int_{B}f\vphi\md m$ for any $\vphi\in\calF_{B}$, hence $\Delta u=f$ in $B$.

Next, we assume that $f\in L^p(B)$. For any $k\ge1$, let $f_k=(f\vee(-k))\wedge k$, then $f_k\in L^\infty(B)\subseteq L^2(B)$ and $\{f_k\}$ converges to $f$ in $L^p(B)$. By the first step, there exists $u_k\in\calF_B$ such that $\Delta u_k=f_k$ in $B$. For any $k,l\ge1$, by \hyperlink{eqn_LSPsiq}{LS($\Psi,q$)}, we have
\begin{align*}
&\calE(u_k-u_l,u_k-u_l)=\int_B(f_k-f_l)(u_k-u_l)\md m\\
&\le\mynorm{f_k-f_l}_{L^p(B)}\left(\dashint_B|u_k-u_l|^{p'}\md m\right)^{1/p'}m(B)^{1/p'}\\
&\le\mynorm{f_k-f_l}_{L^p(B)}\left(\dashint_B|u_k-u_l|^{q}\md m\right)^{1/q}m(B)^{1/p'}\\
&\le\mynorm{f_k-f_l}_{L^p(B)}C_L\sqrt{\Psi(r)}\left(\frac{1}{m(B)}\calE(u_k-u_l,u_k-u_l)\right)^{1/2}m(B)^{1/p'},
\end{align*}
where the third line uses the inequality $p^{\prime}\le q$. Hence
\begin{equation}\label{eqn_similar}
\calE(u_k-u_l,u_k-u_l)^{1/2}\le C_L\sqrt{\Psi(r)}m(B)^{\frac{1}{2}-\frac{1}{p}}\mynorm{f_k-f_l}_{L^p(B)},
\end{equation}
hence $\myset{u_k}$ is an $\calE$-Cauchy sequence in $\calF_B$. Since $(\calF_B,\calE)$ is a Hilbert space, there exists $u\in\calF_B$ such that $\myset{u_k}$ is $\calE$-convergent to $u$.

For any $\vphi\in\calF_B$, we have
$$\calE(u,\vphi)=\lim_{k\to+\infty}\calE(u_k,\vphi)=\lim_{k\to+\infty}\int_{B}f_k\vphi\md m.$$
Since $\vphi\in\calF_B$, by \hyperlink{eqn_LSPsiq}{LS($\Psi,q$)}, we have $\vphi\in L^q(B)$, so that $\vphi\in L^{p^{\prime}}(B)$ (recall that $p^{\prime}\le q$). Using the convergence of $\myset{f_k}$ to $f$ in $L^p(B)$, we have
$$\lim_{k\to+\infty}\int_{B}f_k\vphi\md m=\int_Bf\vphi\md m.$$
Hence $\calE(u,\vphi)=\int_Bf\vphi\md m$ for any $\vphi\in\calF_B$, hence $\Delta u=f$ in $B$. This concludes the proof of the existence.

We now prove the $L^1$-estimate. Let $u\in\calF_B$ satisfy $\Delta u=f$ in $B$. Similarly to Equation (\ref{eqn_similar}), we have
$$\calE(u,u)^{1/2}\le C_L\sqrt{\Psi(r)}m(B)^{\frac{1}{2}-\frac{1}{p}}\mynorm{f}_{L^p(B)}.$$
By \hyperlink{eqn_LSPsiq}{LS($\Psi,q$)}, we have
\begin{align*}
&\dashint_B|u|\md m\le\left(\dashint_B|u|^q\md m\right)^{1/q}\le C_L\sqrt{\Psi(r)}\left(\frac{1}{m(B)}\calE(u,u)\right)^{1/2}\\
&\le C_L\sqrt{\Psi(r)}\frac{1}{m(B)^{1/2}}C_L\sqrt{\Psi(r)}m(B)^{\frac{1}{2}-\frac{1}{p}}\mynorm{f}_{L^p(B)}=C_L^2\Psi(r)\left(\dashint_B|f|^p\md m\right)^{1/p}.
\end{align*}

Finally, we prove the uniqueness. Indeed, let $u_1,u_2\in\calF_B$ satisfy $\Delta u_1=\Delta u_2=f$ in $B$, then $u_1-u_2\in\calF_B$ satisfies $\Delta(u_1-u_2)=0$ in $B$. By the above $L^1$-estimate, we have $u_1=u_2$ $m$-a.e..
\end{proof}

\begin{mylem}\label{lem_Poi_point}
Let $(X,d,m,\calE,\calF)$ be an unbounded MMD space satisfying \ref{eqn_VD}, \hyperlink{eqn_LSPsiq}{LS($\Psi,q$)} and \ref{eqn_CS}. Then for any $p\in\left[\frac{q}{q-1},+\infty\right)$, there exists $C\in(0,+\infty)$ such that for any ball $B=B(x_0,r)$, for any $f\in L^\infty(2B)$, if $u\in\calF$ satisfies $\Delta u=f$ in $2B$, then for $m$-a.e. $x\in B$, we have
$$|u(x)|\le C\left(\dashint_{2B}|u|\md m+F_1(x)\right),$$
where
$$F_1(x)=\sum_{j\le[\log_2r]}\Psi(2^j)\left(\dashint_{B(x,2^j)}|f|^p\md m\right)^{1/p}.$$ 
\end{mylem}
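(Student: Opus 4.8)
The plan is to run a Moser-type iteration on dyadic annuli adapted to the scaling function $\Psi$, combined with the $L^1$-mean value inequality that (as the preceding remark on \ref{eqn_CS} indicates) is available in Lemma \ref{lem_mv}. First I would reduce the problem to controlling $u$ at a Lebesgue point $x\in B$ through a telescoping decomposition along the scales $r_j=2^j$ for $j\le [\log_2 r]$. Write $B_j=B(x,r_j)$; the idea is that the oscillation of $u$ between consecutive scales is governed by the Poisson data $f$ restricted to the annulus, so one sets up, for each $j$, an auxiliary function $w_j\in\calF_{2B_j}$ solving $\Delta w_j=f$ in $2B_j$ supplied by Lemma \ref{lem_Poi_exist}, and then $v_j:=u-w_j$ is harmonic in $2B_j$.

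The key steps, in order, are: (i) For each $j$, apply Lemma \ref{lem_Poi_exist} on the ball $2B_j=B(x,2r_j)$ to obtain $w_j\in\calF_{2B_j}$ with $\Delta w_j=f$, together with the bound $\dashint_{2B_j}|w_j|\,\md m\le C\Psi(2r_j)\bigl(\dashint_{2B_j}|f|^p\,\md m\bigr)^{1/p}\lesssim \Psi(2^j)\bigl(\dashint_{B(x,2^{j+1})}|f|^p\,\md m\bigr)^{1/p}$, which up to reindexing is the $j$-th term of $F_1(x)$. (ii) Since $v_j=u-w_j$ is harmonic in $2B_j$, invoke the $L^1$-mean value inequality for subsolutions/harmonic functions (Lemma \ref{lem_mv}, a consequence of \ref{eqn_VD}, \hyperlink{eqn_LSPsiq}{LS($\Psi,q$)} and \ref{eqn_CS}) to get $|v_j(x)|\lesssim \dashint_{B_j}|v_j|\,\md m$ for $m$-a.e. $x$; hence the pointwise value of $u$ at $x$ is comparable, up to the $w_j$-error, to its average over $B_j$. (iii) Telescope: write $u(x)$ as $\dashint_{2B}|u|$ plus a sum over $j$ of differences of averages at consecutive dyadic scales, each difference being estimated, via step (ii) applied at scale $j+1$ and monotonicity of $\Psi$ and volume doubling, by a multiple of $\dashint_{2B_{j+1}}|w_{j+1}|\,\md m$ plus a geometrically-controlled tail. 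Summing the resulting geometric-type series over $j\le[\log_2 r]$ produces exactly $\dashint_{2B}|u|\,\md m+F_1(x)$, up to a constant depending only on the structural constants and on $p$.

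A few technical points need care. The mean value inequality must be formulated for the \emph{difference} $u-w_j$, which requires that $w_j$ indeed lies in $\calF_{2B_j}$ so that $u-w_j$ is a genuine element of $\calF$ harmonic in $2B_j$; this is guaranteed by Lemma \ref{lem_Poi_exist} provided $f\in L^p(2B_j)$, which holds since $f\in L^\infty(2B)$ and $B_j\subseteq B$. The exponent constraint $p\in[q/(q-1),+\infty)$ is inherited directly from Lemma \ref{lem_Poi_exist}. One also needs to handle Lebesgue points versus $m$-a.e.\ statements: since $u$ need not be continuous a priori, the pointwise value $u(x)$ should be understood through the mean value inequality, which yields the stated bound for $m$-a.e.\ $x\in B$.

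The main obstacle I expect is step (ii)–(iii): obtaining a \emph{clean} mean value inequality for harmonic functions at the correct $\Psi$-scaling and then controlling the telescoping sum without losing a factor that grows with the number $[\log_2 r]$ of scales. The delicate part is that the error terms $\dashint_{2B_j}|w_j|\,\md m$ already decay like $\Psi(2^j)\bigl(\dashint |f|^p\bigr)^{1/p}$, so summing them gives $F_1(x)$ directly, but one must verify that the \emph{residual} harmonic-function terms from the telescoping (the differences of averages of $v_j$ and $v_{j+1}$ over nested balls) are absorbed either into $F_1(x)$ or, at the coarsest scale, into $\dashint_{2B}|u|\,\md m$, and that the constant stays uniform in $r$. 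This is where the interplay between volume doubling, the sub-multiplicativity/monotonicity of $\Psi$ (encoded in $2\le\beta\le\alpha+1$), and \ref{eqn_CS} (entering through Lemma \ref{lem_mv}) is essential.
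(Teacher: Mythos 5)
Your ingredients are the right ones and coincide with the paper's: auxiliary solutions at dyadic scales supplied by Lemma \ref{lem_Poi_exist}, the $L^1$-mean value inequality of Lemma \ref{lem_mv} applied to harmonic differences, and recovery of $u(x)$ at a Lebesgue point as a limit of averages. But the telescoping in your step (iii) is where the proof actually lives, and as you describe it it does not close. You telescope the averages of $u$ over consecutive balls $B_j\supseteq B_{j+1}$ and claim each increment is bounded by a multiple of $\dashint_{2B_{j+1}}|w_{j+1}|\,\md m$ plus a geometrically controlled tail. Writing $u=v_{j+1}+w_{j+1}$, the increment contains the harmonic contribution $\dashint_{B_j}v_{j+1}\,\md m-\dashint_{B_{j+1}}v_{j+1}\,\md m$, and in this setting harmonic functions have no mean value property: the only available control is through $\mynorm{v_{j+1}}_{L^\infty(B_{j+1})}$ via Lemma \ref{lem_mv}, which is of size $\dashint_{2B_{j+1}}|u|\,\md m$ plus a Poisson error. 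That quantity does not decay in $j$, and since the telescoping must run over all scales $j\le[\log_2 r]$ down to $-\infty$ to reach the Lebesgue point, the sum of these harmonic residuals is not controlled. This is exactly the obstacle you flag at the end; it is not a technicality to be absorbed, and with this decomposition the argument fails.

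The repair is to telescope the auxiliary solutions rather than the averages of $u$. With $u_j\in\calF_{B(x,2^j)}$ solving $\Delta u_j=f$ in $B(x,2^j)$ for $j\le j_0:=[\log_2 r]$ (note also that one should use the balls $B(x,2^j)\subseteq 2B$ themselves rather than $B(x,2^{j+1})$, which at the top scale may leave $2B$ where the equation for $u$ is no longer available), write on $B(x,2^k)$ the identity $u=(u-u_{j_0})+\sum_{j=k+2}^{j_0}(u_j-u_{j-1})+u_{k+1}$. Each difference $u_j-u_{j-1}$ is harmonic in $B(x,2^{j-1})$ because both terms solve the same Poisson equation there, so Lemma \ref{lem_mv} bounds its sup-norm on $B(x,2^{j-2})$ by $\Psi(2^j)\bigl(\dashint_{B(x,2^j)}|f|^p\,\md m\bigr)^{1/p}+\Psi(2^{j-1})\bigl(\dashint_{B(x,2^{j-1})}|f|^p\,\md m\bigr)^{1/p}$, i.e.\ by Poisson errors only, and these sum to $F_1(x)$. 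The term $\dashint_{2B}|u|\,\md m$ then enters exactly once, through the single harmonic difference $u-u_{j_0}$ at the top scale, and the bottom term $\dashint_{B(x,2^k)}|u_{k+1}|\,\md m$ is again a Poisson error. Averaging the identity over $B(x,2^k)$ and letting $k\to-\infty$ at a Lebesgue point of $u$ yields the stated bound with a constant uniform in $r$, which is precisely the paper's argument.
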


The proof is inspired by \cite[Proposition 3.1]{CJKS20}, where an $L^1$-version of the mean value inequality (see \cite[Proposition 2.1]{CJKS20}) was needed. The condition \ref{eqn_CS} is intrinsically used to obtain the $L^1$-mean value inequality as follows.

\begin{mylem}(\cite[THEOREM 6.3, LEMMA 9.2]{GHL15})\label{lem_mv}
Let $(X,d,m,\calE,\calF)$ be an unbounded MMD space satisfying \ref{eqn_VD}, \ref{eqn_LS} and \ref{eqn_CS}. Then there exists $C\in(0,+\infty)$ such that for any ball $B=B(x_0,r)$, for any $u\in\calF$ which is harmonic in $2B$, we have
$$\mynorm{u}_{L^\infty\left(B\right)}\le C\dashint_{2B}|u|\md m.$$
\end{mylem}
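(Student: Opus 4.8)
The plan is to derive the $L^1$-mean value inequality from the results of \cite{GHL15}, which give it under the hypotheses \ref{eqn_VD}, \ref{eqn_CS} together with a suitable Faber-Krahn or Sobolev-type inequality. Since we assume \ref{eqn_LS} here, the first step is to invoke Lemma \ref{lem_FKLS} to convert \ref{eqn_LS} into \ref{eqn_FK}, which is the form of the Sobolev/Faber-Krahn input used in \cite{GHL15}. Concretely, \ref{eqn_LS} with exponent $q$ is equivalent to \ref{eqn_FK} with $\nu = 1 - \tfrac{2}{q}$, and this $\nu$ lies in $(0,1)$ as required.

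Next I would feed \ref{eqn_VD}, \ref{eqn_FK} and \ref{eqn_CS} into \cite[THEOREM 6.3]{GHL15}, which is exactly the (elliptic) $L^2$-mean value inequality for functions harmonic in a ball: there exists $C$ such that for any ball $B = B(x_0,r)$ and any $u \in \calF$ harmonic in $2B$,
$$\mynorm{u}_{L^\infty(B)} \le C \left(\dashint_{2B} |u|^2 \md m\right)^{1/2}.$$
To pass from the $L^2$-average on the right-hand side to the $L^1$-average, one uses the standard iteration/self-improvement argument: apply the $L^2$-mean value inequality on a chain of concentric balls $B \subseteq \cdots \subseteq 2B$ with geometrically shrinking gaps, absorbing powers of the volume-doubling constant (which stay bounded because there are finitely many steps, or by summing a convergent geometric series in the iteration exponent). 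This is precisely the content of \cite[LEMMA 9.2]{GHL15}, so in practice both steps are already packaged there; I would simply cite \cite[THEOREM 6.3, LEMMA 9.2]{GHL15} and note that \ref{eqn_LS}, via Lemma \ref{lem_FKLS}, supplies the Faber-Krahn hypothesis required by those statements.

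There is essentially no serious obstacle here, as the lemma is a direct citation; the only point requiring care is bookkeeping the dictionary between the hypotheses as stated in the present paper (with the explicit scaling function $\Psi$) and the hypotheses in \cite{GHL15}. In particular one should check that the Faber-Krahn and cutoff-Sobolev inequalities in \cite{GHL15} are formulated with the same $\Psi$-dependence as \ref{eqn_FK} and \ref{eqn_CS} above — which they are, since $\Psi$ is assumed to satisfy the usual doubling and scaling conditions that make \cite{GHL15} applicable — and that the harmonicity notion (weak solution of $\Delta u = 0$ via $\calE(u,\vphi)=0$ for $\vphi \in \calF \cap C_c(2B)$) matches. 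Once this matching is verified, the constant $C$ produced depends only on the constants in \ref{eqn_VD}, \ref{eqn_LS} and \ref{eqn_CS}, which is all that is claimed.
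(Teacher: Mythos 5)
Your proposal is correct and follows essentially the same route as the paper, which offers no independent argument for this lemma but simply cites \cite[THEOREM 6.3, LEMMA 9.2]{GHL15} — precisely the two results you invoke, with the hypothesis \ref{eqn_LS} translated into the Faber--Krahn form via Lemma \ref{lem_FKLS} exactly as you describe.
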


\begin{proof}[Proof of Lemma \ref{lem_Poi_point}]
Let $j_0=[\log_2r]$. Take an arbitrary Lebesgue point $x\in B$ of $u\in\calF$. For any $j\le j_0$, by Lemma \ref{lem_Poi_exist}, there exists $u_j\in\calF_{B(x,2^j)}$ such that $\Delta u_j=f$ in $B(x,2^j)$ and
$$\dashint_{B(x,2^{j-1})}|u_j|\md m\lesssim\dashint_{B(x,2^j)}|u_j|\md m\lesssim\Psi(2^j)\left(\dashint_{B(x,2^j)}|f|^p\md m\right)^{1/p}.$$
Since $\Delta(u_j-u_{j-1})=0$ in $B(x,2^{j-1})$, by Lemma \ref{lem_mv}, we have
\begin{align*}
&\mynorm{u_j-u_{j-1}}_{L^\infty(B(x,2^{j-2}))}\lesssim\dashint_{B(x,2^{j-1})}|u_j-u_{j-1}|\md m\\
&\le\dashint_{B(x,2^{j-1})}|u_j|\md m+\dashint_{B(x,2^{j-1})}|u_{j-1}|\md m\\
&\lesssim\Psi(2^{j})\left(\dashint_{B(x,2^j)}|f|^p\md m\right)^{1/p}+\Psi(2^{j-1})\left(\dashint_{B(x,2^{j-1})}|f|^p\md m\right)^{1/p}.
\end{align*}
Since $\Delta(u-u_{j_0})=0$ in $B(x,2^{j_0})$, by Lemma \ref{lem_mv}, we have
\begin{align*}
&\mynorm{u-u_{j_0}}_{L^\infty(B(x,2^{j_0-1}))}\lesssim\dashint_{B(x,2^{j_0})}|u-u_{j_0}|\md m\\
&\le\dashint_{B(x,2^{j_0})}|u|\md m+\dashint_{B(x,2^{j_0})}|u_{j_0}|\md m\lesssim\dashint_{2B}|u|\md m+\Psi(2^{j_0})\left(\dashint_{B(x,2^{j_0})}|f|^p\md m\right)^{1/p}.
\end{align*}
Hence
\begin{align*}
&|u(x)|=\lim_{k\to-\infty}\dashint_{B(x,2^{k})}|u|\md m\\
&\le\varliminf_{k\to-\infty}\dashint_{B(x,2^{k})}\left(|u-u_{j_0}|+\sum_{j=k+2}^{j_0}|u_j-u_{j-1}|+|u_{k+1}|\right)\md m\\
&\le\varliminf_{k\to-\infty}\left(\mynorm{u-u_{j_0}}_{L^\infty(B(x,2^{j_0-1}))}+\sum_{j=k+2}^{j_0}\mynorm{u_j-u_{j-1}}_{L^\infty(B(x,2^{j-2}))}+\dashint_{B(x,2^k)}|u_{k+1}|\md m\right)\\
&\lesssim\varliminf_{k\to-\infty}\left(\dashint_{2B}|u|\md m+\Psi(2^{j_0})\left(\dashint_{B(x,2^{j_0})}|f|^p\md m\right)^{1/p}\right.\\
&+\sum_{j=k+2}^{j_0}\left(\Psi(2^{j})\left(\dashint_{B(x,2^j)}|f|^p\md m\right)^{1/p}+\Psi(2^{j-1})\left(\dashint_{B(x,2^{j-1})}|f|^p\md m\right)^{1/p}\right)\\
&\left.+\Psi(2^{k+1})\left(\dashint_{B(x,2^{k+1})}|f|^p\md m\right)^{1/p}\right)\\
&\lesssim\varliminf_{k\to-\infty}\left(\dashint_{2B}|u|\md m+\sum_{j=k+1}^{j_0}\Psi(2^{j})\left(\dashint_{B(x,2^j)}|f|^p\md m\right)^{1/p}\right)\\
&=\dashint_{2B}|u|\md m+\sum_{j\le j_0}\Psi(2^{j})\left(\dashint_{B(x,2^j)}|f|^p\md m\right)^{1/p}.
\end{align*}
\end{proof}

Let us end up this section by presenting reverse H\"older inequalities. We say that an MMD space $(X,d,m,\calE,\calF)$ admits a ``carr\'e du champ" if the energy measure $\Gamma(u,v)$ is absolutely continuous with respect to $m$ for any $u,v\in\calF$. Let $\langle\nabla u,\nabla v\rangle$ denote the Radon derivative $\frac{\md\Gamma(u,v)}{\md m}$ and let $|\nabla u|$ denote the square root of the Radon derivative $\frac{\md\Gamma(u,u)}{\md m}$.

As already encountered in the introduction, say that the reverse H\"older inequality \ref{eqn_RH} holds if there exists $C_H\in(0,+\infty)$ such that for any ball $B=B(x_0,r)$, for any $u\in\calF$ which is harmonic in $2B$, we have
\begin{equation*}\label{eqn_RH}\tag*{RH}
\mynorm{|\nabla u|}_{L^\infty(B)}\le\frac{C_H}{r}\dashint_{2B}|u|\md m.
\end{equation*}

We say that the generalized reverse H\"older inequality \ref{eqn_GRH} holds if there exists $C_H\in(0,+\infty)$ such that for any ball $B=B(x_0,r)$, for any $u\in\calF$ which is harmonic in $2B$, we have
\begin{equation*}\label{eqn_GRH}\tag*{GRH($\Phi,\Psi$)}
\mynorm{|\nabla u|}_{L^\infty(B)}\le C_H\frac{\Phi(r)}{\Psi(r)}\dashint_{2B}|u|\md m,
\end{equation*}
or equivalently, 
$$\mynorm{|\nabla u|}_{L^\infty(B)}\le
\begin{cases}
\frac{C_H}{r}\dashint_{2B}|u|\md m,&\text{if }r\in(0,1),\\
\frac{C_H}{r^{\beta-\alpha}}\dashint_{2B}|u|\md m,&\text{if }r\in[1,+\infty).
\end{cases}
$$

\section{The Vicsek and the Sierpi\'nski Cable Systems}\label{sec_cable}

Let $(V,E)$ be an infinite, locally bounded, connected (undirected) graph, that is, $V$ is the set of vertices which is a countably infinite set, $E\subseteq\{\{p,q\}:p,q\in V\}$ is the set of edges satisfying $\{p,q\}\in E$ if and only if $\{q,p\}\in E$, $\sup_{p\in V}\#\{q\in V:\{p,q\}\in E\}<+\infty$ and for any distinct $p,q\in V$, there exist an integer $n\ge1$ and $p_0,p_1,\ldots,p_n\in V$ satisfying $p_0=p$, $p_n=q$ and $\{p_i,p_{i+1}\}\in E$ for any $i=0,\ldots,n-1$.

We give an arbitrary orientation on each edge $e\in E$ by taking $s:E\to V$ and $t:E\to V$ such that $e=\{s(e),t(e)\}$. Let
$$X=(E\times[0,1])/\sim,$$
where $\sim$ is an equivalence relation given by $s(e_1)=s(e_2)$ implies $(e_1,0)\sim(e_2,0)$, $t(e_1)=t(e_2)$ implies $(e_1,1)\sim(e_2,1)$ and $s(e_1)=t(e_2)$ implies $(e_1,0)\sim (e_2,1)$ for any $e_1,e_2\in E$. Let $\pi:E\times[0,1]\to X$ be the quotient map. We have $V=\pi(E\times\{0,1\})\subseteq X$. For any $x=\pi(e,a)$ and $y=\pi(e,b)$ with $e\in E$ and $a,b\in[0,1]$, let
\begin{align*}
[x,y]&:=\pi\left(e\times[\min\{a,b\},\max\{a,b\}]\right),\\
(x,y)&:=\pi\left(e\times(\min\{a,b\},\max\{a,b\})\right).
\end{align*}
For any $\{p,q\}\in E$, we say that $[p,q]$ is a closed cable and $(p,q)$ is an open cable.

For any distinct $p,q\in V\subseteq X$, let $d(p,p)=0$ and
$$d(p,q)=\inf\myset{n:p=p_0,p_1,\ldots,p_n=q\in V,\{p_i,p_{i+1}\}\in E\text{ for any }i=0,\ldots,n-1}.$$
For any $x,y\in X$, if there exist $e\in E$ and $a,b\in[0,1]$ such that $x=\pi(e,a)$ and $y=\pi(e,b)$, then let $d(x,y)=|a-b|$. Otherwise there exist distinct $e_1,e_2\in E$, there exist $a,b\in[0,1]$ such that $x=\pi(e_1,a)$ and $y=\pi(e_2,b)$, let
\begin{align*}
d(x,y)=\min\Bigl\{&|a|+d(\pi(e_1,0),\pi(e_2,0))+|b|,|a|+d(\pi(e_1,0),\pi(e_2,1))+|b-1|,\\
&|a-1|+d(\pi(e_1,1),\pi(e_2,0))+|b|,|a-1|+d(\pi(e_1,1),\pi(e_2,1))+|b-1|\Bigr\}.
\end{align*}
It is obvious that $d$ is well-defined and $(X,d)$ is a locally compact separable unbounded geodesic metric space. Let $m$ be the unique positive Radon measure on $X$ satisfying
$$m(\pi(e\times[a,b]))=|a-b|\text{ for any }e\in E,\text{ for any }a,b\in[0,1]\text{ with }a\le b.$$

Let $u$ and $v$ be two real-valued functions on $X$, and let $p,q\in V$ with $\{p,q\}\in E$. For any $x$ in the open cable $(p,q)$, define
$$\nabla u(x)=\lim_{(p,q)\ni y\to x}\frac{u(y)-u(x)}{d(y,p)-d(x,p)}.$$
At the vertex $p$ itself, we define the directional derivative in the direction $q$ as
$$\nabla_qu(p)=\lim_{(p,q)\ni y\to p}\frac{u(y)-u(p)}{d(y,p)}.$$
Note that the choice of the roles of $p,q$ determines the sign of $\nabla u(x)$ but does not influence $|\nabla u(x)|$ and $\nabla u(x)\nabla v(x)$. For any measurable subset $D$ of $X$, we denote
$$\mynorm{|\nabla u|}_{L^\infty(D)}=\esssup_{x\in D\backslash V}|\nabla u(x)|.$$
Note that $m(V)=0$, so the above definition makes sense even if $\nabla u(x)$ is not well-defined for any $x\in V$.

Let
\begin{align*}
\calK=&\left\{u\in C_c(X):\nabla u(x),\nabla_qu(p)\text{ exist for any }x\in(p,q),\right.\\
&\left.\text{ for any }p,q\in V\text{ with }\{p,q\}\in E,\mynorm{|\nabla u|}_{L^\infty(X;m)}<+\infty\right\}.
\end{align*}
Let
\begin{align*}
\calE(u,u)&=\frac{1}{2}
\sum_{\mbox{\tiny
$
\begin{subarray}{c}
p,q\in V\\
\{p,q\}\in E
\end{subarray}
$
}}\int_{(p,q)}|\nabla u|^2\md m,\\
\calF&=\text{ the }\calE_1\text{-closure of }\calK.
\end{align*}
Then $(\calE,\calF)$ is a strongly local regular Dirichlet form on $L^2(X;m)$, $(X,d,m,\calE,\calF)$ is an unbounded geodesic MMD space called an unbounded cable system.

It is obvious that $(X,d,m,\calE,\calF)$ admits a ``carr\'e du champ". Indeed, for any $u,v\in\calF$, $\nabla u\nabla v$ is the Radon derivative $\frac{\md\Gamma(u,v)}{\md m}$ and $|\nabla u|^2$ is the Radon derivative $\frac{\md\Gamma(u,u)}{\md m}$. 

Harmonic functions have the following explicit characterization. Let $D$ be a domain in $X$, that is, $D$ is a connected open subset of $X$. Let $u\in\calF$. Then $u$ is harmonic in $D$ if and only if:
\begin{itemize}
\item For any open cable $(p,q)$ intersecting $D$, the function $u$ is linear on each open connected component of $(p,q)\cap D$ (note that there are at most two such components).
\item For any $p\in V\cap D$, the directional derivative $\nabla_qu(p)$ exists for any $q\in V$ with $\{p,q\}\in E$ and the following Kirchhoff condition at $p$ holds:
$$\sum_{\mbox{\tiny
$
\begin{subarray}{c}
q\in V\\
\{p,q\}\in E
\end{subarray}
$
}}\nabla_qu(p)=0.$$
\end{itemize}
See \cite[Section 1.3]{S94} for the Kirchhoff condition.

After the introduction of general cable systems, we describe our two main examples: the Vicsek and the Sierpi\'nski cable systems. Let us start with the Vicsek cable systems. Let $N\ge2$ be an integer. In $\R^N$, let $p_1=(0,\ldots,0),\ldots,p_{2^N}$ be the vertices of the cube $[0,\frac{2}{\sqrt{N}}]^N\subseteq\R^N$, let $p_{2^N+1}=\frac{1}{2^N}\sum_{i=1}^{2^N}p_i=(\frac{1}{\sqrt{N}},\ldots,\frac{1}{\sqrt{N}})$. Let $f_i(x)=\frac{1}{3}x+\frac{2}{3}p_i$, $x\in\R^N$, $i=1,\ldots,2^N,2^N+1$. Then the $N$-dimensional Vicsek set is the unique non-empty compact set $K$ in $\R^N$ satisfying $K=\cup_{i=1}^{2^N+1}f_i(K)$.

Let $V_0=\{p_1,\ldots,p_{2^N},p_{2^N+1}\}$ and $V_{n+1}=\cup_{i=1}^{2^N+1}f_i(V_n)$ for any $n\ge0$. Then $\myset{V_n}_{n\ge0}$ is an increasing sequence of finite subsets of $K$ and the closure of $\cup_{n\ge0}V_n$ is $K$.

\begin{figure}[ht]
\centering
\begin{subfigure}[b]{0.2\textwidth}
\centering
\begin{tikzpicture}[scale=0.15]
\draw (0,0)--(2,2);
\draw (0,2)--(2,0);
\draw[fill=black] (0,0) circle (0.25);
\draw[fill=black] (0,2) circle (0.25);
\draw[fill=black] (2,2) circle (0.25);
\draw[fill=black] (2,0) circle (0.25);
\draw[fill=black] (1,1) circle (0.25);
\end{tikzpicture}
\caption{$V^{(0)}$}
\end{subfigure}
\hspace{2em}
\begin{subfigure}[b]{0.2\textwidth}
\centering
\begin{tikzpicture}[scale=0.15]
\draw (0,2)--(2,0);
\draw (0,0)--(2,2);
\draw (4,0)--(6,2);
\draw (4,2)--(6,0);
\draw (4,6)--(6,4);
\draw (4,4)--(6,6);
\draw (0,4)--(2,6);
\draw (2,4)--(0,6);
\draw (2,2)--(4,4);
\draw (2,4)--(4,2);

\draw[fill=black] (0,0) circle (0.25);
\draw[fill=black] (0,2) circle (0.25);
\draw[fill=black] (2,2) circle (0.25);
\draw[fill=black] (2,0) circle (0.25);
\draw[fill=black] (1,1) circle (0.25);

\draw[fill=black] (0+4,0) circle (0.25);
\draw[fill=black] (0+4,2) circle (0.25);
\draw[fill=black] (2+4,2) circle (0.25);
\draw[fill=black] (2+4,0) circle (0.25);
\draw[fill=black] (1+4,1) circle (0.25);

\draw[fill=black] (0,0+4) circle (0.25);
\draw[fill=black] (0,2+4) circle (0.25);
\draw[fill=black] (2,2+4) circle (0.25);
\draw[fill=black] (2,0+4) circle (0.25);
\draw[fill=black] (1,1+4) circle (0.25);

\draw[fill=black] (0+4,0+4) circle (0.25);
\draw[fill=black] (0+4,2+4) circle (0.25);
\draw[fill=black] (2+4,2+4) circle (0.25);
\draw[fill=black] (2+4,0+4) circle (0.25);
\draw[fill=black] (1+4,1+4) circle (0.25);

\draw[fill=black] (0+2,0+2) circle (0.25);
\draw[fill=black] (0+2,2+2) circle (0.25);
\draw[fill=black] (2+2,2+2) circle (0.25);
\draw[fill=black] (2+2,0+2) circle (0.25);
\draw[fill=black] (1+2,1+2) circle (0.25);
\end{tikzpicture}
\caption{$V^{(1)}$}
\end{subfigure}
\hspace{2em}
\begin{subfigure}[b]{0.3\textwidth}
\centering
\begin{tikzpicture}[scale=0.15]
\draw (0,2)--(2,0);
\draw (0,0)--(2,2);
\draw (4,0)--(6,2);
\draw (4,2)--(6,0);
\draw (4,6)--(6,4);
\draw (4,4)--(6,6);
\draw (0,4)--(2,6);
\draw (2,4)--(0,6);
\draw (2,2)--(4,4);
\draw (2,4)--(4,2);

\draw (0+12,2)--(2+12,0);
\draw (0+12,0)--(2+12,2);
\draw (4+12,0)--(6+12,2);
\draw (4+12,2)--(6+12,0);
\draw (4+12,6)--(6+12,4);
\draw (4+12,4)--(6+12,6);
\draw (0+12,4)--(2+12,6);
\draw (2+12,4)--(0+12,6);
\draw (2+12,2)--(4+12,4);
\draw (2+12,4)--(4+12,2);

\draw (0+6,2+6)--(2+6,0+6);
\draw (0+6,0+6)--(2+6,2+6);
\draw (4+6,0+6)--(6+6,2+6);
\draw (4+6,2+6)--(6+6,0+6);
\draw (4+6,6+6)--(6+6,4+6);
\draw (4+6,4+6)--(6+6,6+6);
\draw (0+6,4+6)--(2+6,6+6);
\draw (2+6,4+6)--(0+6,6+6);
\draw (2+6,2+6)--(4+6,4+6);
\draw (2+6,4+6)--(4+6,2+6);

\draw (0,2+12)--(2,0+12);
\draw (0,0+12)--(2,2+12);
\draw (4,0+12)--(6,2+12);
\draw (4,2+12)--(6,0+12);
\draw (4,6+12)--(6,4+12);
\draw (4,4+12)--(6,6+12);
\draw (0,4+12)--(2,6+12);
\draw (2,4+12)--(0,6+12);
\draw (2,2+12)--(4,4+12);
\draw (2,4+12)--(4,2+12);

\draw (0+12,2+12)--(2+12,0+12);
\draw (0+12,0+12)--(2+12,2+12);
\draw (4+12,0+12)--(6+12,2+12);
\draw (4+12,2+12)--(6+12,0+12);
\draw (4+12,6+12)--(6+12,4+12);
\draw (4+12,4+12)--(6+12,6+12);
\draw (0+12,4+12)--(2+12,6+12);
\draw (2+12,4+12)--(0+12,6+12);
\draw (2+12,2+12)--(4+12,4+12);
\draw (2+12,4+12)--(4+12,2+12);

\draw[fill=black] (0,0) circle (0.25);
\draw[fill=black] (0,2) circle (0.25);
\draw[fill=black] (2,2) circle (0.25);
\draw[fill=black] (2,0) circle (0.25);
\draw[fill=black] (1,1) circle (0.25);

\draw[fill=black] (0+4,0) circle (0.25);
\draw[fill=black] (0+4,2) circle (0.25);
\draw[fill=black] (2+4,2) circle (0.25);
\draw[fill=black] (2+4,0) circle (0.25);
\draw[fill=black] (1+4,1) circle (0.25);

\draw[fill=black] (0,0+4) circle (0.25);
\draw[fill=black] (0,2+4) circle (0.25);
\draw[fill=black] (2,2+4) circle (0.25);
\draw[fill=black] (2,0+4) circle (0.25);
\draw[fill=black] (1,1+4) circle (0.25);

\draw[fill=black] (0+4,0+4) circle (0.25);
\draw[fill=black] (0+4,2+4) circle (0.25);
\draw[fill=black] (2+4,2+4) circle (0.25);
\draw[fill=black] (2+4,0+4) circle (0.25);
\draw[fill=black] (1+4,1+4) circle (0.25);

\draw[fill=black] (0+2,0+2) circle (0.25);
\draw[fill=black] (0+2,2+2) circle (0.25);
\draw[fill=black] (2+2,2+2) circle (0.25);
\draw[fill=black] (2+2,0+2) circle (0.25);
\draw[fill=black] (1+2,1+2) circle (0.25);

\draw[fill=black] (0+12,0) circle (0.25);
\draw[fill=black] (0+12,2) circle (0.25);
\draw[fill=black] (2+12,2) circle (0.25);
\draw[fill=black] (2+12,0) circle (0.25);
\draw[fill=black] (1+12,1) circle (0.25);

\draw[fill=black] (0+4+12,0) circle (0.25);
\draw[fill=black] (0+4+12,2) circle (0.25);
\draw[fill=black] (2+4+12,2) circle (0.25);
\draw[fill=black] (2+4+12,0) circle (0.25);
\draw[fill=black] (1+4+12,1) circle (0.25);

\draw[fill=black] (0+12,0+4) circle (0.25);
\draw[fill=black] (0+12,2+4) circle (0.25);
\draw[fill=black] (2+12,2+4) circle (0.25);
\draw[fill=black] (2+12,0+4) circle (0.25);
\draw[fill=black] (1+12,1+4) circle (0.25);

\draw[fill=black] (0+4+12,0+4) circle (0.25);
\draw[fill=black] (0+4+12,2+4) circle (0.25);
\draw[fill=black] (2+4+12,2+4) circle (0.25);
\draw[fill=black] (2+4+12,0+4) circle (0.25);
\draw[fill=black] (1+4+12,1+4) circle (0.25);

\draw[fill=black] (0+2+12,0+2) circle (0.25);
\draw[fill=black] (0+2+12,2+2) circle (0.25);
\draw[fill=black] (2+2+12,2+2) circle (0.25);
\draw[fill=black] (2+2+12,0+2) circle (0.25);
\draw[fill=black] (1+2+12,1+2) circle (0.25);

\draw[fill=black] (0,0+12) circle (0.25);
\draw[fill=black] (0,2+12) circle (0.25);
\draw[fill=black] (2,2+12) circle (0.25);
\draw[fill=black] (2,0+12) circle (0.25);
\draw[fill=black] (1,1+12) circle (0.25);

\draw[fill=black] (0+4,0+12) circle (0.25);
\draw[fill=black] (0+4,2+12) circle (0.25);
\draw[fill=black] (2+4,2+12) circle (0.25);
\draw[fill=black] (2+4,0+12) circle (0.25);
\draw[fill=black] (1+4,1+12) circle (0.25);

\draw[fill=black] (0,0+4+12) circle (0.25);
\draw[fill=black] (0,2+4+12) circle (0.25);
\draw[fill=black] (2,2+4+12) circle (0.25);
\draw[fill=black] (2,0+4+12) circle (0.25);
\draw[fill=black] (1,1+4+12) circle (0.25);

\draw[fill=black] (0+4,0+4+12) circle (0.25);
\draw[fill=black] (0+4,2+4+12) circle (0.25);
\draw[fill=black] (2+4,2+4+12) circle (0.25);
\draw[fill=black] (2+4,0+4+12) circle (0.25);
\draw[fill=black] (1+4,1+4+12) circle (0.25);

\draw[fill=black] (0+2,0+2+12) circle (0.25);
\draw[fill=black] (0+2,2+2+12) circle (0.25);
\draw[fill=black] (2+2,2+2+12) circle (0.25);
\draw[fill=black] (2+2,0+2+12) circle (0.25);
\draw[fill=black] (1+2,1+2+12) circle (0.25);

\draw[fill=black] (0+12,0+12) circle (0.25);
\draw[fill=black] (0+12,2+12) circle (0.25);
\draw[fill=black] (2+12,2+12) circle (0.25);
\draw[fill=black] (2+12,0+12) circle (0.25);
\draw[fill=black] (1+12,1+12) circle (0.25);

\draw[fill=black] (0+4+12,0+12) circle (0.25);
\draw[fill=black] (0+4+12,2+12) circle (0.25);
\draw[fill=black] (2+4+12,2+12) circle (0.25);
\draw[fill=black] (2+4+12,0+12) circle (0.25);
\draw[fill=black] (1+4+12,1+12) circle (0.25);

\draw[fill=black] (0+12,0+4+12) circle (0.25);
\draw[fill=black] (0+12,2+4+12) circle (0.25);
\draw[fill=black] (2+12,2+4+12) circle (0.25);
\draw[fill=black] (2+12,0+4+12) circle (0.25);
\draw[fill=black] (1+12,1+4+12) circle (0.25);

\draw[fill=black] (0+4+12,0+4+12) circle (0.25);
\draw[fill=black] (0+4+12,2+4+12) circle (0.25);
\draw[fill=black] (2+4+12,2+4+12) circle (0.25);
\draw[fill=black] (2+4+12,0+4+12) circle (0.25);
\draw[fill=black] (1+4+12,1+4+12) circle (0.25);

\draw[fill=black] (0+2+12,0+2+12) circle (0.25);
\draw[fill=black] (0+2+12,2+2+12) circle (0.25);
\draw[fill=black] (2+2+12,2+2+12) circle (0.25);
\draw[fill=black] (2+2+12,0+2+12) circle (0.25);
\draw[fill=black] (1+2+12,1+2+12) circle (0.25);

\draw[fill=black] (0+6,0+6) circle (0.25);
\draw[fill=black] (0+6,2+6) circle (0.25);
\draw[fill=black] (2+6,2+6) circle (0.25);
\draw[fill=black] (2+6,0+6) circle (0.25);
\draw[fill=black] (1+6,1+6) circle (0.25);

\draw[fill=black] (0+4+6,0+6) circle (0.25);
\draw[fill=black] (0+4+6,2+6) circle (0.25);
\draw[fill=black] (2+4+6,2+6) circle (0.25);
\draw[fill=black] (2+4+6,0+6) circle (0.25);
\draw[fill=black] (1+4+6,1+6) circle (0.25);

\draw[fill=black] (0+6,0+4+6) circle (0.25);
\draw[fill=black] (0+6,2+4+6) circle (0.25);
\draw[fill=black] (2+6,2+4+6) circle (0.25);
\draw[fill=black] (2+6,0+4+6) circle (0.25);
\draw[fill=black] (1+6,1+4+6) circle (0.25);

\draw[fill=black] (0+4+6,0+4+6) circle (0.25);
\draw[fill=black] (0+4+6,2+4+6) circle (0.25);
\draw[fill=black] (2+4+6,2+4+6) circle (0.25);
\draw[fill=black] (2+4+6,0+4+6) circle (0.25);
\draw[fill=black] (1+4+6,1+4+6) circle (0.25);

\draw[fill=black] (0+2+6,0+2+6) circle (0.25);
\draw[fill=black] (0+2+6,2+2+6) circle (0.25);
\draw[fill=black] (2+2+6,2+2+6) circle (0.25);
\draw[fill=black] (2+2+6,0+2+6) circle (0.25);
\draw[fill=black] (1+2+6,1+2+6) circle (0.25);

\end{tikzpicture}
\caption{$V^{(2)}$}
\end{subfigure}
\caption{$V^{(0)}$, $V^{(1)}$ and $V^{(2)}$ for $N=2$}\label{fig_V012_Vicsek}
\end{figure}
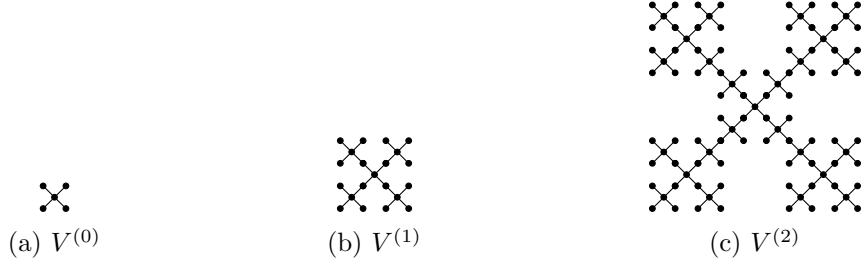

For any $n\ge0$, let $V^{(n)}=3^nV_n=\{3^nv:v\in V_n\}$, see Figure \ref{fig_V012_Vicsek} for $V^{(0)}$, $V^{(1)}$ and $V^{(2)}$ for $N=2$. Then $\myset{V^{(n)}}_{n\ge0}$ is an increasing sequence of finite sets. Let $V=\cup_{n\ge0}V^{(n)}$ and $E=\{\{p,q\}:p,q\in V,|p-q|=1\}$, then $(V,E)$ is an infinite, locally bounded, connected graph, the corresponding unbounded cable system is called the $N$-dimensional Vicsek cable system. Each closed (open) cable is a(n) closed (open) interval in $\R^N$ and
$$X=\bigcup_{\mbox{\tiny
$
\begin{subarray}{c}
p,q\in V\\
|p-q|=1
\end{subarray}
$
}}
[p,q]\subseteq\R^N,$$
here $[p,q]$ denotes the closed interval with endpoints $p,q\in\R^N$. It can be easily checked (\cite[Equation (4.14)]{BCG01}) that \ref{eqn_VPhi} holds with $\alpha=\log(2^N+1)/\log3$.

For any $n\ge0$, we say that a subset $W$ of $X$ is an $n$-skeleton if $W$ is a translation of the intersection of the closed convex hull of $V^{(n)}$ and $X$. It is obvious that the closed convex hull of $W$ is a cube, we say that the $2^N$ vertices of the cube are the boundary points of the skeleton and the center of the cube is the center of the skeleton.

Let $n\ge0$ and let $W$ be an $n$-skeleton; denote by $q_1,\ldots,q_{2^N}$ its boundary points and $q_{2^N+1}$ its center, see Figure \ref{fig_n_Vicsek} for $N=2$. Let $u$ be a harmonic function in $W\backslash\{q_1,\ldots,q_{2^N}\}$ with $u(q_i)=a_i$, $i=1,\ldots,2^N$. The fact that $(V,E)$ is a tree implies that each point $x\in W\backslash\cup_{i=1}^{2^N}[q_i,q_{2^N+1}]$ can be joined to $\cup_{i=1}^{2^N}[q_i,q_{2^N+1}]$ by a unique path, let $\gamma(x)\in\cup_{i=1}^{2^N}[q_i,q_{2^N+1}]$ denote the other endpoint of the path. For any $x\in\cup_{i=1}^{2^N}[q_i,q_{2^N+1}]$, let $\gamma(x)=x$. Then the harmonicity of $u$ in $W\backslash\{q_1,\ldots,q_{2^N}\}$ is equivalent to the following.
\begin{itemize}
\item $u(q_{2^N+1})=\frac{1}{2^N}\sum_{i=1}^{2^N}a_i$.
\item For any $i=1,\ldots,2^N$, the function $u$ is linear on the closed interval $[q_i,q_{2^N+1}]$.
\item For any $x\in W$, there holds: $u(x)=u(\gamma(x))$.
\end{itemize}

\begin{figure}[ht]
\centering
\begin{tikzpicture}[scale=0.3]
\draw (-0.7,-0.7) node {$q_1$};
\draw (18.7,-0.7) node {$q_2$};
\draw (18.7,18.7) node {$q_3$};
\draw (-0.7,18.7) node {$q_4$};

\draw (10.3,9) node {$q_5$};

\draw (0,2)--(2,0);
\draw (0,0)--(2,2);
\draw (4,0)--(6,2);
\draw (4,2)--(6,0);
\draw (4,6)--(6,4);
\draw (4,4)--(6,6);
\draw (0,4)--(2,6);
\draw (2,4)--(0,6);
\draw (2,2)--(4,4);
\draw (2,4)--(4,2);

\draw (0+12,2)--(2+12,0);
\draw (0+12,0)--(2+12,2);
\draw (4+12,0)--(6+12,2);
\draw (4+12,2)--(6+12,0);
\draw (4+12,6)--(6+12,4);
\draw (4+12,4)--(6+12,6);
\draw (0+12,4)--(2+12,6);
\draw (2+12,4)--(0+12,6);
\draw (2+12,2)--(4+12,4);
\draw (2+12,4)--(4+12,2);

\draw (0+6,2+6)--(2+6,0+6);
\draw (0+6,0+6)--(2+6,2+6);
\draw (4+6,0+6)--(6+6,2+6);
\draw (4+6,2+6)--(6+6,0+6);
\draw (4+6,6+6)--(6+6,4+6);
\draw (4+6,4+6)--(6+6,6+6);
\draw (0+6,4+6)--(2+6,6+6);
\draw (2+6,4+6)--(0+6,6+6);
\draw (2+6,2+6)--(4+6,4+6);
\draw (2+6,4+6)--(4+6,2+6);

\draw (0,2+12)--(2,0+12);
\draw (0,0+12)--(2,2+12);
\draw (4,0+12)--(6,2+12);
\draw (4,2+12)--(6,0+12);
\draw (4,6+12)--(6,4+12);
\draw (4,4+12)--(6,6+12);
\draw (0,4+12)--(2,6+12);
\draw (2,4+12)--(0,6+12);
\draw (2,2+12)--(4,4+12);
\draw (2,4+12)--(4,2+12);

\draw (0+12,2+12)--(2+12,0+12);
\draw (0+12,0+12)--(2+12,2+12);
\draw (4+12,0+12)--(6+12,2+12);
\draw (4+12,2+12)--(6+12,0+12);
\draw (4+12,6+12)--(6+12,4+12);
\draw (4+12,4+12)--(6+12,6+12);
\draw (0+12,4+12)--(2+12,6+12);
\draw (2+12,4+12)--(0+12,6+12);
\draw (2+12,2+12)--(4+12,4+12);
\draw (2+12,4+12)--(4+12,2+12);

\draw[fill=black] (0,0) circle (0.25);
\draw[fill=black] (0,2) circle (0.25);
\draw[fill=black] (2,2) circle (0.25);
\draw[fill=black] (2,0) circle (0.25);
\draw[fill=black] (1,1) circle (0.25);

\draw[fill=black] (0+4,0) circle (0.25);
\draw[fill=black] (0+4,2) circle (0.25);
\draw[fill=black] (2+4,2) circle (0.25);
\draw[fill=black] (2+4,0) circle (0.25);
\draw[fill=black] (1+4,1) circle (0.25);

\draw[fill=black] (0,0+4) circle (0.25);
\draw[fill=black] (0,2+4) circle (0.25);
\draw[fill=black] (2,2+4) circle (0.25);
\draw[fill=black] (2,0+4) circle (0.25);
\draw[fill=black] (1,1+4) circle (0.25);

\draw[fill=black] (0+4,0+4) circle (0.25);
\draw[fill=black] (0+4,2+4) circle (0.25);
\draw[fill=black] (2+4,2+4) circle (0.25);
\draw[fill=black] (2+4,0+4) circle (0.25);
\draw[fill=black] (1+4,1+4) circle (0.25);

\draw[fill=black] (0+2,0+2) circle (0.25);
\draw[fill=black] (0+2,2+2) circle (0.25);
\draw[fill=black] (2+2,2+2) circle (0.25);
\draw[fill=black] (2+2,0+2) circle (0.25);
\draw[fill=black] (1+2,1+2) circle (0.25);

\draw[fill=black] (0+12,0) circle (0.25);
\draw[fill=black] (0+12,2) circle (0.25);
\draw[fill=black] (2+12,2) circle (0.25);
\draw[fill=black] (2+12,0) circle (0.25);
\draw[fill=black] (1+12,1) circle (0.25);

\draw[fill=black] (0+4+12,0) circle (0.25);
\draw[fill=black] (0+4+12,2) circle (0.25);
\draw[fill=black] (2+4+12,2) circle (0.25);
\draw[fill=black] (2+4+12,0) circle (0.25);
\draw[fill=black] (1+4+12,1) circle (0.25);

\draw[fill=black] (0+12,0+4) circle (0.25);
\draw[fill=black] (0+12,2+4) circle (0.25);
\draw[fill=black] (2+12,2+4) circle (0.25);
\draw[fill=black] (2+12,0+4) circle (0.25);
\draw[fill=black] (1+12,1+4) circle (0.25);

\draw[fill=black] (0+4+12,0+4) circle (0.25);
\draw[fill=black] (0+4+12,2+4) circle (0.25);
\draw[fill=black] (2+4+12,2+4) circle (0.25);
\draw[fill=black] (2+4+12,0+4) circle (0.25);
\draw[fill=black] (1+4+12,1+4) circle (0.25);

\draw[fill=black] (0+2+12,0+2) circle (0.25);
\draw[fill=black] (0+2+12,2+2) circle (0.25);
\draw[fill=black] (2+2+12,2+2) circle (0.25);
\draw[fill=black] (2+2+12,0+2) circle (0.25);
\draw[fill=black] (1+2+12,1+2) circle (0.25);

\draw[fill=black] (0,0+12) circle (0.25);
\draw[fill=black] (0,2+12) circle (0.25);
\draw[fill=black] (2,2+12) circle (0.25);
\draw[fill=black] (2,0+12) circle (0.25);
\draw[fill=black] (1,1+12) circle (0.25);

\draw[fill=black] (0+4,0+12) circle (0.25);
\draw[fill=black] (0+4,2+12) circle (0.25);
\draw[fill=black] (2+4,2+12) circle (0.25);
\draw[fill=black] (2+4,0+12) circle (0.25);
\draw[fill=black] (1+4,1+12) circle (0.25);

\draw[fill=black] (0,0+4+12) circle (0.25);
\draw[fill=black] (0,2+4+12) circle (0.25);
\draw[fill=black] (2,2+4+12) circle (0.25);
\draw[fill=black] (2,0+4+12) circle (0.25);
\draw[fill=black] (1,1+4+12) circle (0.25);

\draw[fill=black] (0+4,0+4+12) circle (0.25);
\draw[fill=black] (0+4,2+4+12) circle (0.25);
\draw[fill=black] (2+4,2+4+12) circle (0.25);
\draw[fill=black] (2+4,0+4+12) circle (0.25);
\draw[fill=black] (1+4,1+4+12) circle (0.25);

\draw[fill=black] (0+2,0+2+12) circle (0.25);
\draw[fill=black] (0+2,2+2+12) circle (0.25);
\draw[fill=black] (2+2,2+2+12) circle (0.25);
\draw[fill=black] (2+2,0+2+12) circle (0.25);
\draw[fill=black] (1+2,1+2+12) circle (0.25);

\draw[fill=black] (0+12,0+12) circle (0.25);
\draw[fill=black] (0+12,2+12) circle (0.25);
\draw[fill=black] (2+12,2+12) circle (0.25);
\draw[fill=black] (2+12,0+12) circle (0.25);
\draw[fill=black] (1+12,1+12) circle (0.25);

\draw[fill=black] (0+4+12,0+12) circle (0.25);
\draw[fill=black] (0+4+12,2+12) circle (0.25);
\draw[fill=black] (2+4+12,2+12) circle (0.25);
\draw[fill=black] (2+4+12,0+12) circle (0.25);
\draw[fill=black] (1+4+12,1+12) circle (0.25);

\draw[fill=black] (0+12,0+4+12) circle (0.25);
\draw[fill=black] (0+12,2+4+12) circle (0.25);
\draw[fill=black] (2+12,2+4+12) circle (0.25);
\draw[fill=black] (2+12,0+4+12) circle (0.25);
\draw[fill=black] (1+12,1+4+12) circle (0.25);

\draw[fill=black] (0+4+12,0+4+12) circle (0.25);
\draw[fill=black] (0+4+12,2+4+12) circle (0.25);
\draw[fill=black] (2+4+12,2+4+12) circle (0.25);
\draw[fill=black] (2+4+12,0+4+12) circle (0.25);
\draw[fill=black] (1+4+12,1+4+12) circle (0.25);

\draw[fill=black] (0+2+12,0+2+12) circle (0.25);
\draw[fill=black] (0+2+12,2+2+12) circle (0.25);
\draw[fill=black] (2+2+12,2+2+12) circle (0.25);
\draw[fill=black] (2+2+12,0+2+12) circle (0.25);
\draw[fill=black] (1+2+12,1+2+12) circle (0.25);

\draw[fill=black] (0+6,0+6) circle (0.25);
\draw[fill=black] (0+6,2+6) circle (0.25);
\draw[fill=black] (2+6,2+6) circle (0.25);
\draw[fill=black] (2+6,0+6) circle (0.25);
\draw[fill=black] (1+6,1+6) circle (0.25);

\draw[fill=black] (0+4+6,0+6) circle (0.25);
\draw[fill=black] (0+4+6,2+6) circle (0.25);
\draw[fill=black] (2+4+6,2+6) circle (0.25);
\draw[fill=black] (2+4+6,0+6) circle (0.25);
\draw[fill=black] (1+4+6,1+6) circle (0.25);

\draw[fill=black] (0+6,0+4+6) circle (0.25);
\draw[fill=black] (0+6,2+4+6) circle (0.25);
\draw[fill=black] (2+6,2+4+6) circle (0.25);
\draw[fill=black] (2+6,0+4+6) circle (0.25);
\draw[fill=black] (1+6,1+4+6) circle (0.25);

\draw[fill=black] (0+4+6,0+4+6) circle (0.25);
\draw[fill=black] (0+4+6,2+4+6) circle (0.25);
\draw[fill=black] (2+4+6,2+4+6) circle (0.25);
\draw[fill=black] (2+4+6,0+4+6) circle (0.25);
\draw[fill=black] (1+4+6,1+4+6) circle (0.25);

\draw[fill=black] (0+2+6,0+2+6) circle (0.25);
\draw[fill=black] (0+2+6,2+2+6) circle (0.25);
\draw[fill=black] (2+2+6,2+2+6) circle (0.25);
\draw[fill=black] (2+2+6,0+2+6) circle (0.25);
\draw[fill=black] (1+2+6,1+2+6) circle (0.25);

\end{tikzpicture}
\caption{An $n$-Skeleton in the 2-Dimensional Vicsek Cable System}\label{fig_n_Vicsek}
\end{figure}
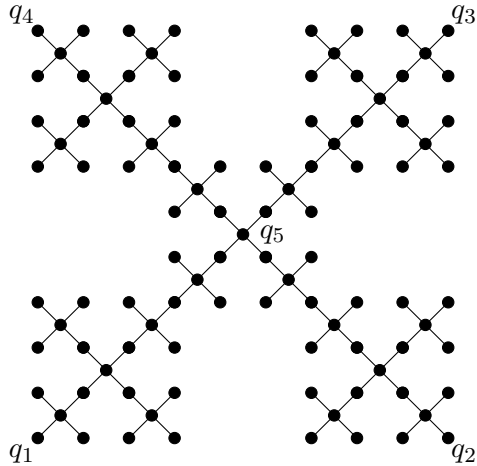

We now describe the Sierpi\'nski cable system. In $\R^2$, let $p_1=(0,0)$, $p_2=(1,0)$ and $p_3=(\frac{1}{2},\frac{\sqrt{3}}{2})$. Let $f_i(x)=\frac{1}{2}(x+p_i)$, $x\in\R^2$, $i=1,2,3$. Then the Sierpi\'nski gasket is the unique non-empty compact set $K$ in $\R^2$ satisfying $K=\cup_{i=1}^3f_i(K)$.

Let $V_0=\myset{p_1,p_2,p_3}$ and $V_{n+1}=\cup_{i=1}^3f_i(V_n)$ for any $n\ge0$. Then $\myset{V_n}_{n\ge0}$ is an increasing sequence of finite subsets of $K$ and the closure of $\cup_{n\ge0}V_n$ is $K$.

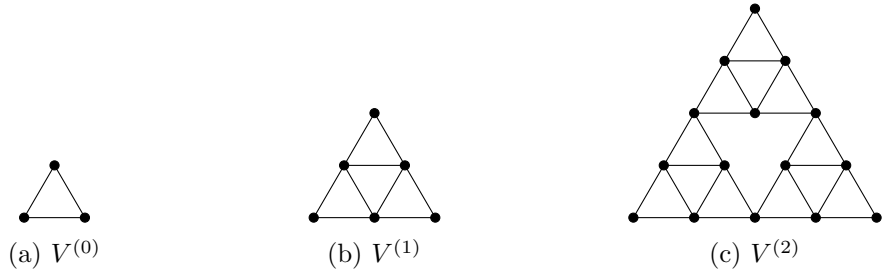
\begin{figure}[ht]
\centering
\begin{subfigure}[b]{0.2\textwidth}
\centering
\begin{tikzpicture}[scale=0.2]
\draw (0,0)--(4,0)--(2,2*1.73205080756887)--cycle;

\draw[fill=black] (0,0) circle (0.3);
\draw[fill=black] (4,0) circle (0.3);
\draw[fill=black] (2,2*1.73205080756887) circle (0.3);
\end{tikzpicture}
\caption{$V^{(0)}$}
\end{subfigure}
\hspace{2em}
\begin{subfigure}[b]{0.2\textwidth}
\centering
\begin{tikzpicture}[scale=0.4]
\draw (0,0)--(4,0)--(2,2*1.73205080756887)--cycle;
\draw (2,0)--(3,1.73205080756887)--(1,1.73205080756887)--cycle;

\draw[fill=black] (1,1*1.73205080756887) circle (0.3/2);
\draw[fill=black] (3,1*1.73205080756887) circle (0.3/2);
\draw[fill=black] (2,2*1.73205080756887) circle (0.3/2);
\draw[fill=black] (0,0) circle (0.3/2);
\draw[fill=black] (2,0) circle (0.3/2);
\draw[fill=black] (4,0) circle (0.3/2);
\end{tikzpicture}
\caption{$V^{(1)}$}
\end{subfigure}
\hspace{2em}
\begin{subfigure}[b]{0.3\textwidth}
\centering
\begin{tikzpicture}[scale=0.8]
\draw (0,0)--(4,0)--(2,2*1.73205080756887)--cycle;
\draw (2,0)--(3,1.73205080756887)--(1,1.73205080756887)--cycle;
\draw (1,0)--(1.5,0.5*1.73205080756887)--(0.5,0.5*1.73205080756887)--cycle;
\draw (3,0)--(3.5,0.5*1.73205080756887)--(2.5,0.5*1.73205080756887)--cycle;
\draw (2,1.73205080756887)--(1.5,1.5*1.73205080756887)--(2.5,1.5*1.73205080756887)--cycle;

\draw[fill=black] (0.5,0.5*1.73205080756887) circle (0.3/4);
\draw[fill=black] (1,1*1.73205080756887) circle (0.3/4);
\draw[fill=black] (1.5,1.5*1.73205080756887) circle (0.3/4);
\draw[fill=black] (2,2*1.73205080756887) circle (0.3/4);
\draw[fill=black] (2.5,1.5*1.73205080756887) circle (0.3/4);
\draw[fill=black] (3,1*1.73205080756887) circle (0.3/4);
\draw[fill=black] (3.5,0.5*1.73205080756887) circle (0.3/4);

\draw[fill=black] (0,0) circle (0.3/4);
\draw[fill=black] (1,0) circle (0.3/4);
\draw[fill=black] (2,0) circle (0.3/4);
\draw[fill=black] (3,0) circle (0.3/4);
\draw[fill=black] (4,0) circle (0.3/4);

\draw[fill=black] (1.5,0.5*1.73205080756887) circle (0.3/4);
\draw[fill=black] (2.5,0.5*1.73205080756887) circle (0.3/4);
\draw[fill=black] (2,1*1.73205080756887) circle (0.3/4);

\end{tikzpicture}
\caption{$V^{(2)}$}
\end{subfigure}
\caption{$V^{(0)}$, $V^{(1)}$ and $V^{(2)}$}\label{fig_V012_Sierpinski}
\end{figure}

For any $n\ge0$, let $V^{(n)}=2^nV_n=\{2^nv:v\in V_n\}$, see Figure \ref{fig_V012_Sierpinski} for $V^{(0)}$, $V^{(1)}$ and $V^{(2)}$. Then $\myset{V^{(n)}}_{n\ge0}$ is an increasing sequence of finite sets. Let $V=\cup_{n\ge0}V^{(n)}$ and $E=\{\{p,q\}:p,q\in V,|p-q|=1\}$, then $(V,E)$ is an infinite, locally bounded, connected graph, the corresponding unbounded cable system is called the Sierpi\'nski cable system. Each closed (open) cable is a(n) closed (open) interval in $\R^2$ and
$$X=\bigcup_{\mbox{\tiny
$
\begin{subarray}{c}
p,q\in V\\
|p-q|=1
\end{subarray}
$
}}
[p,q]\subseteq\R^2,$$
here $[p,q]$ denotes the closed interval with endpoints $p,q\in\R^2$. It is well-known (\cite[Lemma 2.1]{B98}) that \ref{eqn_VPhi} holds with $\alpha=\log3/\log2$.

For any $n\ge0$, we say that a subset $W$ of $X$ is an $n$-skeleton if $W$ is a translation of the intersection of the closed convex hull of $V^{(n)}$ and $X$. It is obvious that the closed convex hull of $W$ is an equilateral triangle, we call the three vertices of the triangle the boundary points of the skeleton.

Let $n\ge1$ and let $W$ be an $n$-skeleton with boundary points $q_1,q_2,q_3$. Let $q_4,q_5,q_6\in V$ denote the midpoints of the closed intervals $[q_1,q_2]$, $[q_2,q_3]$, $[q_3,q_1]$, respectively, see Figure \ref{fig_n_Sierpinski}. Let $u$ be a harmonic function in $W\backslash\{q_1,q_2,q_3\}$ with $u(q_i)=a_i$, $i=1,2,3$. By the standard $\frac{2}{5}$-$\frac{2}{5}$-$\frac{1}{5}$-algorithm (see \cite[Proposition 3.2.1, Example 3.2.6]{Kig01}), we have
\begin{align}
u(q_4)&=\frac{2}{5}a_1+\frac{2}{5}a_2+\frac{1}{5}a_3,\nonumber\\
u(q_5)&=\frac{1}{5}a_1+\frac{2}{5}a_2+\frac{2}{5}a_3,\label{eqn_2215}\\
u(q_6)&=\frac{2}{5}a_1+\frac{1}{5}a_2+\frac{2}{5}a_3.\nonumber
\end{align}

\begin{figure}[ht]
\centering
\begin{tikzpicture}[scale=1.2]
\draw (-0.3,-0.3) node {$q_1$};
\draw (4.3,-0.3) node {$q_2$};
\draw (2,2*1.73205080756887+0.3) node {$q_3$};

\draw (2,-0.3) node {$q_4$};
\draw (3+0.3,1*1.73205080756887) node {$q_5$};
\draw (1-0.3,1*1.73205080756887) node {$q_6$};

\draw (0,0)--(4,0)--(2,2*1.73205080756887)--cycle;
\draw (2,0)--(3,1.73205080756887)--(1,1.73205080756887)--cycle;
\draw (1,0)--(1.5,0.5*1.73205080756887)--(0.5,0.5*1.73205080756887)--cycle;
\draw (3,0)--(3.5,0.5*1.73205080756887)--(2.5,0.5*1.73205080756887)--cycle;
\draw (2,1.73205080756887)--(1.5,1.5*1.73205080756887)--(2.5,1.5*1.73205080756887)--cycle;

\draw[fill=black] (0.5,0.5*1.73205080756887) circle (0.3/4);
\draw[fill=black] (1,1*1.73205080756887) circle (0.3/4);
\draw[fill=black] (1.5,1.5*1.73205080756887) circle (0.3/4);
\draw[fill=black] (2,2*1.73205080756887) circle (0.3/4);
\draw[fill=black] (2.5,1.5*1.73205080756887) circle (0.3/4);
\draw[fill=black] (3,1*1.73205080756887) circle (0.3/4);
\draw[fill=black] (3.5,0.5*1.73205080756887) circle (0.3/4);

\draw[fill=black] (0,0) circle (0.3/4);
\draw[fill=black] (1,0) circle (0.3/4);
\draw[fill=black] (2,0) circle (0.3/4);
\draw[fill=black] (3,0) circle (0.3/4);
\draw[fill=black] (4,0) circle (0.3/4);

\draw[fill=black] (1.5,0.5*1.73205080756887) circle (0.3/4);
\draw[fill=black] (2.5,0.5*1.73205080756887) circle (0.3/4);
\draw[fill=black] (2,1*1.73205080756887) circle (0.3/4);

\end{tikzpicture}
\caption{An $n$-Skeleton in the Sierpi\'nski Cable System}\label{fig_n_Sierpinski}
\end{figure}
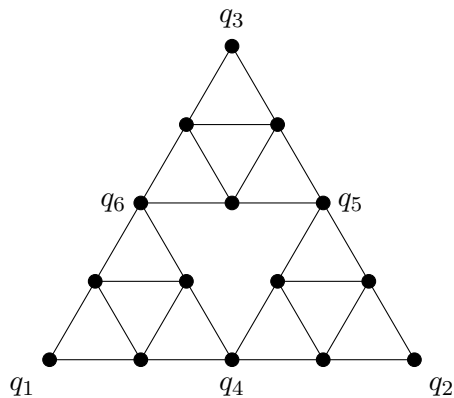

It can be shown that \hyperlink{eqn_HKPsi}{HK($\Psi$)} holds with $\beta=\log(3\cdot(2^N+1))/\log3$ for the $N$-dimensional Vicsek cable system and $\beta=\log5/\log2$ for the Sierpi\'nski cable system. For example, it is easy to check that the conditions $(H)$ and $(R_F)$ from \cite{GH14a} hold with $F=\Psi$, then it follows from \cite[Theorem 3.14]{GH14a} that the conditions $(UE)$ and $(NLE)$ from \cite{GH14a} hold, that is, \ref{eqn_UHK} and \ref{eqn_NLE} hold. Since $(X,d)$ is geodesic, we have \hyperlink{eqn_HKPsi}{HK($\Psi$)}. By Proposition \ref{prop_UHK}, we have \ref{eqn_FK} and \ref{eqn_CS}, so that the results about Poisson equation in Subsection \ref{subsec_Poi} apply.

\section{Reverse H\"older Inequalities}\label{sec_GRH}

To start with, we show that \ref{eqn_RH} holds on the $N$-dimensional Vicsek cable system.

\begin{myprop}\label{prop_RH_Vicsek}
The reverse H\"older  inequality \ref{eqn_RH} holds on the $N$-dimensional Vicsek cable system.
\end{myprop}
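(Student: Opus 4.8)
The plan is to reduce the whole statement to the explicit description of harmonic functions on Vicsek skeletons recalled above, exploiting two features of the Vicsek cable system: for every $n\ge 0$ the space $X$ is covered by translates of the $n$-skeleton $V^{(n)}$ overlapping only in corner points, and the $L^1$-mean value inequality of Lemma~\ref{lem_mv} is available (since \ref{eqn_VD}, \ref{eqn_LS} and \ref{eqn_CS} hold here). Fix a ball $B=B(x_0,r)$ and $u\in\calF$ harmonic in $2B$; it suffices to bound $|\nabla u(x)|$ for $m$-a.e. $x\in B$. Such an $x$ lies on an open cable $(p,q)$, and since cables have length $1$ we have $(p,q)\subseteq B(x,1)\subseteq B(x_0,r+1)$, so $[p,q]\subseteq 2B$ as soon as $r\ge 1$; then $u$ is affine on $[p,q]$ and $|\nabla u(x)|=|u(p)-u(q)|$.

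First I would dispose of the case where $r$ stays below an absolute constant $C_\ast$ (to be fixed in the next step). For such $r$ there is, near $x$, an interval $I\subseteq 2B$ lying in a single cable, with $x$ as one endpoint and length $\gtrsim\min(r,1)$ — because $B(x,r/2)\subseteq 2B$ always, $B(x,1/2)\subseteq 2B$ when $r\ge 1/2$, and $B(x,1/2)$ meets at most one vertex — on which $u$ is affine. The one-dimensional reverse H\"older inequality $\lVert|\nabla u|\rVert_{L^\infty(I)}\le\frac{4}{|I|}\dashint_{I}|u|\,dm$ together with $V(x_0,2r)\asymp\Phi(2r)$ from \ref{eqn_VPhi} then yields $|\nabla u(x)|\le\frac{C}{r}\dashint_{2B}|u|\,dm$ with $C=C(C_\ast)$.

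Next, for $r\ge C_\ast$, I would localize to a skeleton of the right size. Put $\rho=r/8$ and let $n$ be the largest integer with $\operatorname{diam}(V^{(n)})\le\rho/2$; for $C_\ast$ large this forces $n\ge 0$ and $3^n\asymp r$. Since $d(x,x_0)<r$, $u$ is harmonic in $B(x,\rho)\subseteq 2B$. Choose a translate $W$ of $V^{(n)}$ with $x\in W$; then $\operatorname{diam}(W)\le\rho/2$ gives $W\subseteq B(x,\rho/2)\subseteq B(x,\rho)$, so $u$ is harmonic in the interior of $W$, i.e. $u|_W$ is one of the skeleton-harmonic functions described above with corner values $a_i=u(q_i)$. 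Hence $|\nabla u|$ vanishes off the radial segments $[q_i,q_{2^N+1}]$ and, on the $i$-th of these (a segment of length $3^n$), equals $\bigl|a_i-\tfrac1{2^N}\sum_j a_j\bigr|/3^n$, so that
$$
|\nabla u(x)|\le\lVert|\nabla u|\rVert_{L^\infty(W)}\le\frac{2}{3^n}\sup_W|u|\le\frac{2}{3^n}\sup_{B(x,\rho/2)}|u|.
$$
Applying Lemma~\ref{lem_mv} to $u$ on $B(x,\rho/2)$ gives $\sup_{B(x,\rho/2)}|u|\le C\dashint_{B(x,\rho)}|u|\,dm$, and since $B(x,\rho)\subseteq 2B$ with $V(x,\rho)\asymp V(x_0,2r)$ by \ref{eqn_VPhi}, this is $\le C\dashint_{2B}|u|\,dm$; combined with $3^n\asymp r$ this is exactly \ref{eqn_RH}.

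The hard part is the structural input in the third step: one needs that the gradient of a harmonic function on $V^{(n)}$ is spread over radial segments of length $\asymp 3^n$, equivalently $\lVert|\nabla u|\rVert_{\infty}\asymp\operatorname{osc}_{W}(u)/3^n$ rather than concentrated, and this uses decisively that the Vicsek set is a tree — a harmonic function is radially linear along the skeleton and constant on the branches hanging off it, which is precisely the content of the $\tfrac1{2^N}$-averaging description recalled before the proposition. Once that is granted, everything else is bookkeeping: calibrating the skeleton scale $3^n$ against $r$ so that the ball on which the mean value inequality is used stays inside $2B$, and the elementary one-dimensional treatment of the bounded range of $r$. (By contrast, no scaling argument can shortcut this, since the dilation $x\mapsto 3x$ maps $X$ into a proper subset, not onto $X$.)
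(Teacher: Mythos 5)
Your argument is correct, and it reaches \ref{eqn_RH} by a genuinely different route at the one place where the proof is not bookkeeping. Both you and the paper dispose of small radii by the one-dimensional estimate on a single cable, and both reduce the case $r$ large to the explicit form of harmonic functions on an $n$-skeleton of size $3^n\asymp r$, which gives $|\nabla u(x)|\le\frac{2}{3^n}\max_i|u(q_i)|$. The difference is how $\max_i|u(q_i)|$ is converted into $\dashint_{2B}|u|\,\md m$: you invoke the $L^1$-mean value inequality of Lemma \ref{lem_mv} on a ball $B(x,\rho/2)\supseteq W$, which is legitimate here since \ref{eqn_VD}, \ref{eqn_LS} and \ref{eqn_CS} are verified for the Vicsek cable system independently of this proposition (via \hyperlink{eqn_HKPsi}{HK($\Psi$)} and Proposition \ref{prop_UHK}), so there is no circularity; the paper instead stays elementary: assuming $u(q_1)=\max_i|u(q_i)|>0$, it uses the maximum principle twice to show $u\ge\frac{5}{9}u(q_1)>0$ on a fixed $(n-2)$-sub-skeleton $W_0\subseteq W$, so that $u(q_1)\le\frac{9}{5}\dashint_{W_0}u\,\md m\lesssim\dashint_{2B}|u|\,\md m$ follows from \ref{eqn_VPhi} alone. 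Your version is shorter and more robust (it works verbatim wherever the $L^1$-mean value inequality is available, and with a skeleton centred at $x$ rather than one containing the cable $(p,q)$), but it imports the nontrivial machinery behind Lemma \ref{lem_mv}; the paper's version is self-contained, uses only the skeleton harmonic structure, the maximum principle and \ref{eqn_VPhi}, and yields explicit constants. The structural facts you take for granted (every point lies in an $n$-skeleton for each $n$, skeletons consist of whole cables and meet the rest of $X$ only at corner points) are used without proof in the paper as well, so this is not a gap relative to its level of detail.
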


\begin{proof}
Let $B$ be a ball with radius $r$. If $r\in(0,27)$, then the conclusion follows from the result on intervals.  Therefore, we may assume that $r\in[27,+\infty)$.

For any $x\in B\backslash V$, there exist $p,q\in V\cap 2B$ with $|p-q|=1$ such that $x\in(p,q)\subseteq2B$. Since $u$ is harmonic in $2B$, we have $|\nabla u(x)|=|u(p)-u(q)|$. Let $n\ge 2$ be the positive integer satisfying $3^{n+1}\le r<3^{n+2}$, then there exists an $n$-skeleton $W$ satisfying $p,q\in W\subseteq 2B$. Therefore,
$$m(W)=2^N\cdot(2^N+1)^{n}\le m(2B)\le C_R\Phi(2r)\le C_R(3\cdot3^{n+2})^\alpha=C_R(2^N+1)^{n+3},$$
where $C_R$ is the constant in \ref{eqn_VPhi}.

Let $q_1,\ldots,q_{2^N}$ be the boundary points of $W$ and $q_{2^{N}+1}$ the center of $W$. Then we have
\begin{align*}
|u(p)-u(q)|&\le\frac{1}{3^{n}}\max\{|u(q_i)-u(q_{2^N+1})|:i=1,\ldots,2^N\}\\
&=\frac{1}{3^{n}}\max\{|u(q_i)-\frac{1}{2^N}\sum_{i=1}^{2^N}u(q_i)|:i=1,\ldots,2^N\}\\
&\le\frac{2}{3^n}\max\{|u(q_i)|:i=1,\ldots,2^N\}.
\end{align*}

Without loss of generality, we may assume that $u(q_1)>0$ and $|u(q_1)|=\max\{|u(q_i)|:i=1,\ldots,2^N\}$. Let $W_0$ be the $(n-2)$-skeleton with a boundary point $q_1$ satisfying $W_0\subseteq W$. Let $q_0$ be the boundary point of $W_0$ that lies in $(q_1,q_{2^N+1})$, see Figure \ref{fig_look_Vicsek} for $N=2$.

\begin{figure}[ht]
\centering
\begin{tikzpicture}[scale=0.3]
\draw (-0.7,-0.7) node {$q_1$};
\draw (18.7,-0.7) node {$q_2$};
\draw (18.7,18.7) node {$q_3$};
\draw (-0.7,18.7) node {$q_4$};

\draw (10.3,9) node {$q_5$};

\draw (2.7,1.3) node {$q_0$};

\draw (0,2)--(2,0);
\draw (0,0)--(2,2);
\draw (4,0)--(6,2);
\draw (4,2)--(6,0);
\draw (4,6)--(6,4);
\draw (4,4)--(6,6);
\draw (0,4)--(2,6);
\draw (2,4)--(0,6);
\draw (2,2)--(4,4);
\draw (2,4)--(4,2);

\draw (0+12,2)--(2+12,0);
\draw (0+12,0)--(2+12,2);
\draw (4+12,0)--(6+12,2);
\draw (4+12,2)--(6+12,0);
\draw (4+12,6)--(6+12,4);
\draw (4+12,4)--(6+12,6);
\draw (0+12,4)--(2+12,6);
\draw (2+12,4)--(0+12,6);
\draw (2+12,2)--(4+12,4);
\draw (2+12,4)--(4+12,2);

\draw (0+6,2+6)--(2+6,0+6);
\draw (0+6,0+6)--(2+6,2+6);
\draw (4+6,0+6)--(6+6,2+6);
\draw (4+6,2+6)--(6+6,0+6);
\draw (4+6,6+6)--(6+6,4+6);
\draw (4+6,4+6)--(6+6,6+6);
\draw (0+6,4+6)--(2+6,6+6);
\draw (2+6,4+6)--(0+6,6+6);
\draw (2+6,2+6)--(4+6,4+6);
\draw (2+6,4+6)--(4+6,2+6);

\draw (0,2+12)--(2,0+12);
\draw (0,0+12)--(2,2+12);
\draw (4,0+12)--(6,2+12);
\draw (4,2+12)--(6,0+12);
\draw (4,6+12)--(6,4+12);
\draw (4,4+12)--(6,6+12);
\draw (0,4+12)--(2,6+12);
\draw (2,4+12)--(0,6+12);
\draw (2,2+12)--(4,4+12);
\draw (2,4+12)--(4,2+12);

\draw (0+12,2+12)--(2+12,0+12);
\draw (0+12,0+12)--(2+12,2+12);
\draw (4+12,0+12)--(6+12,2+12);
\draw (4+12,2+12)--(6+12,0+12);
\draw (4+12,6+12)--(6+12,4+12);
\draw (4+12,4+12)--(6+12,6+12);
\draw (0+12,4+12)--(2+12,6+12);
\draw (2+12,4+12)--(0+12,6+12);
\draw (2+12,2+12)--(4+12,4+12);
\draw (2+12,4+12)--(4+12,2+12);

\draw[fill=black] (0,0) circle (0.25);
\draw[fill=black] (0,2) circle (0.25);
\draw[fill=black] (2,2) circle (0.25);
\draw[fill=black] (2,0) circle (0.25);
\draw[fill=black] (1,1) circle (0.25);

\draw[fill=black] (0+4,0) circle (0.25);
\draw[fill=black] (0+4,2) circle (0.25);
\draw[fill=black] (2+4,2) circle (0.25);
\draw[fill=black] (2+4,0) circle (0.25);
\draw[fill=black] (1+4,1) circle (0.25);

\draw[fill=black] (0,0+4) circle (0.25);
\draw[fill=black] (0,2+4) circle (0.25);
\draw[fill=black] (2,2+4) circle (0.25);
\draw[fill=black] (2,0+4) circle (0.25);
\draw[fill=black] (1,1+4) circle (0.25);

\draw[fill=black] (0+4,0+4) circle (0.25);
\draw[fill=black] (0+4,2+4) circle (0.25);
\draw[fill=black] (2+4,2+4) circle (0.25);
\draw[fill=black] (2+4,0+4) circle (0.25);
\draw[fill=black] (1+4,1+4) circle (0.25);

\draw[fill=black] (0+2,0+2) circle (0.25);
\draw[fill=black] (0+2,2+2) circle (0.25);
\draw[fill=black] (2+2,2+2) circle (0.25);
\draw[fill=black] (2+2,0+2) circle (0.25);
\draw[fill=black] (1+2,1+2) circle (0.25);

\draw[fill=black] (0+12,0) circle (0.25);
\draw[fill=black] (0+12,2) circle (0.25);
\draw[fill=black] (2+12,2) circle (0.25);
\draw[fill=black] (2+12,0) circle (0.25);
\draw[fill=black] (1+12,1) circle (0.25);

\draw[fill=black] (0+4+12,0) circle (0.25);
\draw[fill=black] (0+4+12,2) circle (0.25);
\draw[fill=black] (2+4+12,2) circle (0.25);
\draw[fill=black] (2+4+12,0) circle (0.25);
\draw[fill=black] (1+4+12,1) circle (0.25);

\draw[fill=black] (0+12,0+4) circle (0.25);
\draw[fill=black] (0+12,2+4) circle (0.25);
\draw[fill=black] (2+12,2+4) circle (0.25);
\draw[fill=black] (2+12,0+4) circle (0.25);
\draw[fill=black] (1+12,1+4) circle (0.25);

\draw[fill=black] (0+4+12,0+4) circle (0.25);
\draw[fill=black] (0+4+12,2+4) circle (0.25);
\draw[fill=black] (2+4+12,2+4) circle (0.25);
\draw[fill=black] (2+4+12,0+4) circle (0.25);
\draw[fill=black] (1+4+12,1+4) circle (0.25);

\draw[fill=black] (0+2+12,0+2) circle (0.25);
\draw[fill=black] (0+2+12,2+2) circle (0.25);
\draw[fill=black] (2+2+12,2+2) circle (0.25);
\draw[fill=black] (2+2+12,0+2) circle (0.25);
\draw[fill=black] (1+2+12,1+2) circle (0.25);

\draw[fill=black] (0,0+12) circle (0.25);
\draw[fill=black] (0,2+12) circle (0.25);
\draw[fill=black] (2,2+12) circle (0.25);
\draw[fill=black] (2,0+12) circle (0.25);
\draw[fill=black] (1,1+12) circle (0.25);

\draw[fill=black] (0+4,0+12) circle (0.25);
\draw[fill=black] (0+4,2+12) circle (0.25);
\draw[fill=black] (2+4,2+12) circle (0.25);
\draw[fill=black] (2+4,0+12) circle (0.25);
\draw[fill=black] (1+4,1+12) circle (0.25);

\draw[fill=black] (0,0+4+12) circle (0.25);
\draw[fill=black] (0,2+4+12) circle (0.25);
\draw[fill=black] (2,2+4+12) circle (0.25);
\draw[fill=black] (2,0+4+12) circle (0.25);
\draw[fill=black] (1,1+4+12) circle (0.25);

\draw[fill=black] (0+4,0+4+12) circle (0.25);
\draw[fill=black] (0+4,2+4+12) circle (0.25);
\draw[fill=black] (2+4,2+4+12) circle (0.25);
\draw[fill=black] (2+4,0+4+12) circle (0.25);
\draw[fill=black] (1+4,1+4+12) circle (0.25);

\draw[fill=black] (0+2,0+2+12) circle (0.25);
\draw[fill=black] (0+2,2+2+12) circle (0.25);
\draw[fill=black] (2+2,2+2+12) circle (0.25);
\draw[fill=black] (2+2,0+2+12) circle (0.25);
\draw[fill=black] (1+2,1+2+12) circle (0.25);

\draw[fill=black] (0+12,0+12) circle (0.25);
\draw[fill=black] (0+12,2+12) circle (0.25);
\draw[fill=black] (2+12,2+12) circle (0.25);
\draw[fill=black] (2+12,0+12) circle (0.25);
\draw[fill=black] (1+12,1+12) circle (0.25);

\draw[fill=black] (0+4+12,0+12) circle (0.25);
\draw[fill=black] (0+4+12,2+12) circle (0.25);
\draw[fill=black] (2+4+12,2+12) circle (0.25);
\draw[fill=black] (2+4+12,0+12) circle (0.25);
\draw[fill=black] (1+4+12,1+12) circle (0.25);

\draw[fill=black] (0+12,0+4+12) circle (0.25);
\draw[fill=black] (0+12,2+4+12) circle (0.25);
\draw[fill=black] (2+12,2+4+12) circle (0.25);
\draw[fill=black] (2+12,0+4+12) circle (0.25);
\draw[fill=black] (1+12,1+4+12) circle (0.25);

\draw[fill=black] (0+4+12,0+4+12) circle (0.25);
\draw[fill=black] (0+4+12,2+4+12) circle (0.25);
\draw[fill=black] (2+4+12,2+4+12) circle (0.25);
\draw[fill=black] (2+4+12,0+4+12) circle (0.25);
\draw[fill=black] (1+4+12,1+4+12) circle (0.25);

\draw[fill=black] (0+2+12,0+2+12) circle (0.25);
\draw[fill=black] (0+2+12,2+2+12) circle (0.25);
\draw[fill=black] (2+2+12,2+2+12) circle (0.25);
\draw[fill=black] (2+2+12,0+2+12) circle (0.25);
\draw[fill=black] (1+2+12,1+2+12) circle (0.25);

\draw[fill=black] (0+6,0+6) circle (0.25);
\draw[fill=black] (0+6,2+6) circle (0.25);
\draw[fill=black] (2+6,2+6) circle (0.25);
\draw[fill=black] (2+6,0+6) circle (0.25);
\draw[fill=black] (1+6,1+6) circle (0.25);

\draw[fill=black] (0+4+6,0+6) circle (0.25);
\draw[fill=black] (0+4+6,2+6) circle (0.25);
\draw[fill=black] (2+4+6,2+6) circle (0.25);
\draw[fill=black] (2+4+6,0+6) circle (0.25);
\draw[fill=black] (1+4+6,1+6) circle (0.25);

\draw[fill=black] (0+6,0+4+6) circle (0.25);
\draw[fill=black] (0+6,2+4+6) circle (0.25);
\draw[fill=black] (2+6,2+4+6) circle (0.25);
\draw[fill=black] (2+6,0+4+6) circle (0.25);
\draw[fill=black] (1+6,1+4+6) circle (0.25);

\draw[fill=black] (0+4+6,0+4+6) circle (0.25);
\draw[fill=black] (0+4+6,2+4+6) circle (0.25);
\draw[fill=black] (2+4+6,2+4+6) circle (0.25);
\draw[fill=black] (2+4+6,0+4+6) circle (0.25);
\draw[fill=black] (1+4+6,1+4+6) circle (0.25);

\draw[fill=black] (0+2+6,0+2+6) circle (0.25);
\draw[fill=black] (0+2+6,2+2+6) circle (0.25);
\draw[fill=black] (2+2+6,2+2+6) circle (0.25);
\draw[fill=black] (2+2+6,0+2+6) circle (0.25);
\draw[fill=black] (1+2+6,1+2+6) circle (0.25);

\end{tikzpicture}
\caption{Looking at an $(n-2)$-Skeleton in the $2$-Dimensional Vicsek Cable System}\label{fig_look_Vicsek}
\end{figure}
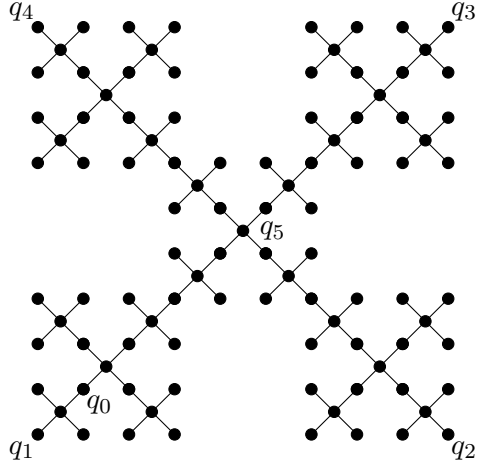

By the maximum principle, we have $u(q_{2^{N}+1})\ge-u(q_1)$, hence
$$u(q_0)=\frac{7}{9}u(q_1)+\frac{2}{9}u(q_{2^N+1})\ge\frac{5}{9}u(q_1).$$
Note that $u$ is harmonic in the open set $W_0\setminus \left\{q_0,q_1\right\}$, and moreover $u(q_0)\ge \frac 59u(q_1)$, $u(q_1)\ge \frac 59 u(q_1)$. By the maximum principle again, we have
$$u\ge\frac{5}{9}u(q_1)>0\text{ on }W_0.$$
Hence
\begin{align*}
&|\nabla u(x)|=|u(p)-u(q)|\le\frac{2}{3^n}u(q_1)\le\frac{2}{3^n}\frac{9}{5}\dashint_{W_0}u\md m\\
&\le\frac{2}{5}\frac{1}{3^{n-2}}\frac{1}{2^N\cdot(2^N+1)^{n-2}}\int_{2B}|u|\md m\le\frac{3^4\cdot(2^N+1)^5C_R}{5\cdot2^{N-1}}\frac{1}{3^{n+2}}\dashint_{2B}|u|\md m\le\frac{C}{r}\dashint_{2B}|u|\md m,
\end{align*}
where $C=\frac{3^4\cdot(2^N+1)^5C_R}{5\cdot2^{N-1}}$, hence
$$\mynorm{|\nabla u|}_{L^\infty(B)}\le\frac{C}{r}\dashint_{2B}|u|\md m.$$
\end{proof}

We now show that \ref{eqn_RH} does not hold on the Sierpi\'nski cable system as follows.

\begin{myprop}\label{prop_RH}
The reverse H\"older inequality \ref{eqn_RH} does not hold on the Sierpi\'nski cable system.
\end{myprop}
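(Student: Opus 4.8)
The plan is to produce, for every large $n$, a ball $B$ of radius $r=2^{n-2}$ and a function $u\in\calF$ harmonic in $2B$ with $\mynorm{|\nabla u|}_{L^\infty(B)}\ge (3/5)^{n}$ and $\dashint_{2B}|u|\md m\le 2$; then \ref{eqn_RH} would give $(6/5)^{n}\le 8C_H$ for all large $n$, which is absurd. The first step is to compute $|\nabla u|$ for harmonic functions on skeletons. Let $M_{n}$ denote the supremum of $|\nabla w|$ over an $n$-skeleton, $w$ being harmonic in its interior with boundary values $1,0,0$. From the $\frac25$-$\frac25$-$\frac15$ rule (\ref{eqn_2215}): on the level-$(n-1)$ sub-skeleton at the corner where $w=1$ one has $w=\frac25+\frac35\mytilde{w}$ with $\mytilde{w}$ of the same type, while on each of the other two sub-skeletons $w$ is a convex combination $\lambda_1 g_1+\lambda_2 g_2$ of two such functions with $\lambda_1+\lambda_2=\frac35$; hence $M_{n}=\frac35M_{n-1}$, and since $M_0=1$ (a triangle of three unit cables, $w$ having slope $1$ along two of them) we get $M_{n}=(3/5)^{n}$. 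Tracking the recursion also shows that $M_{n}$ is already attained on the two cables incident to the corner where $w=1$, hence within distance $1$ of it.

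By the maximum principle $|\nabla w|$ is largest near the boundary points of a skeleton, where $w$ need not be harmonic, so a single skeleton cannot be used directly (a ball $2B$ surrounding such a point would contain a vertex at which $u$ is not harmonic). To get around this I would glue two $n$-skeletons. Fix $n$; let $W$ be the standard $n$-skeleton, with corners $q_1$ (the one at the origin), $q_2$, $q_3$, and let $W'$ be the $n$-skeleton meeting $W$ exactly at $q_2$. Its existence, together with $W\cap W'=\{q_2\}$, $W\cup W'\subseteq X$, and the fact that $q_2$ is a degree-$4$ vertex all of whose cables lie in $W\cup W'$, follows from the self-similar description of the Sierpi\'nski cable system in Section \ref{sec_cable}. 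The only vertices through which a path in $X$ can leave $W\cup W'$ are the four remaining corners $q_1,q_3,q_2',q_3'$, and $q_1$ is a degree-$2$ dead end; since two corners of an $n$-skeleton are at distance $2^{n}$ (immediate by induction, corner-to-corner distance at least doubling from each level to the next), it follows that $B(q_2,2^{n})\subseteq W\cup W'$.

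Now let $u$ be harmonic in the interiors of $W$ and $W'$ with boundary values $u(q_2)=1$, $u(q_2')=u(q_3')=0$ on $W'$ and $u(q_2)=1$, $u(q_1)=u(q_3)=2$ on $W$. The only vertex where harmonicity remains to be checked is $q_2$: the Kirchhoff condition there asks the sum of the directional derivatives of $u$ along its four cables to vanish, and since for an $n$-skeleton with corner values $c_i,c_j,c_k$ the sum of the two directional derivatives of the harmonic extension at the corner with value $c_i$ equals $(3/5)^{n}\bigl[(c_j-c_i)+(c_k-c_i)\bigr]$, this reads
$$(3/5)^{n}\bigl[(2-1)+(2-1)\bigr]+(3/5)^{n}\bigl[(0-1)+(0-1)\bigr]=0,$$
which holds. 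Hence $u$ is well defined and $0\le u\le 2$ on $W\cup W'$. Put $B:=B(q_2,2^{n-2})$, so $2B=B(q_2,2^{n-1})\subseteq B(q_2,2^n)\subseteq W\cup W'$; then $u$ is harmonic in $2B$. On $W'$ the corner $q_2$ has value $1$, so by the first step $|\nabla u|=(3/5)^{n}$ on the cables of $W'$ incident to $q_2$, which lie in $B$. Replacing $u$ by $\chi u$ for a Lipschitz cutoff $\chi$ equal to $1$ on $\overline{2B}$ and supported in $B(q_2,3\cdot 2^{n-2})\subseteq W\cup W'$ yields $\mytilde{u}\in\calK\subseteq\calF$ agreeing with $u$ on $2B$, hence harmonic there. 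Feeding $\mytilde{u}$, $B$, $2B$ into \ref{eqn_RH} would give
$$(3/5)^{n}\le\mynorm{|\nabla \mytilde{u}|}_{L^\infty(B)}\le\frac{C_H}{2^{n-2}}\dashint_{2B}|\mytilde{u}|\md m\le\frac{8C_H}{2^{n}},$$
that is $(6/5)^{n}\le 8C_H$ for all large $n$ — a contradiction. The places requiring care are the self-similarity bookkeeping that locates $W\cup W'$ and its exits inside $X$, and the elementary recursion $M_n=(3/5)^nM_0$; the conceptual core is the maximum-principle obstruction that makes the two-skeleton gluing necessary.
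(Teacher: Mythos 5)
Your proposal is correct and is essentially the paper's own argument: you build the same counterexample family (two large skeletons glued at a degree-$4$ corner, with harmonic data balanced at the shared vertex so that the Kirchhoff condition holds, which in the paper is arranged by the antisymmetric choice $\pm1$ with value $0$ at the center — your values $0,1,2$ are just an affine shift of this), use the $\frac{2}{5}$-$\frac{2}{5}$-$\frac{1}{5}$ algorithm to get gradient $(3/5)^{n}$ on the cables at the center while the average of $|u|$ over $2B$ stays bounded, and reach the same contradiction $(6/5)^{n}\lesssim C_H$. The only differences are cosmetic (explicit Kirchhoff bookkeeping and the recursion $M_n=(3/5)^n$ in place of the paper's direct computation of $u$ at the neighbors of the center, and slightly different radii), so the proof stands as a faithful reconstruction of the paper's route.
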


\begin{proof}
Suppose by contradiction that \ref{eqn_RH} holds. For any $n\ge0$, consider the ball $B=B(2^{n+1}p_2,2^{n})$, and let $u\in\calF$ be a harmonic function in $2B=B(2^{n+1}p_2,2^{n+1})$ with $u(p_1)=u(2^{n+1}p_3)=-1$ and $u(2^{n+2}p_2)=u(2^{n+1}p_2+2^{n+1}p_3)=1$, see Figure \ref{fig_RH}. The function $u$ can be obtained by applying the standard $\frac{2}{5}$-$\frac{2}{5}$-$\frac{1}{5}$-algorithm in $2B$, and then extending the function arbitrarily outside $\overline{2B}$ only to ensure that $u\in\calF$. Note that $p_1$, $2^{n+1}p_3$, $2^{n+2}p_2$, $2^{n+1}p_2+2^{n+1}p_3\not\in2B$. It is obvious that $u(2^{n+1}p_2)=0$ and by the maximum principle, $|u|\leq 1$ in $2B$, therefore:
$$\dashint_{2B}|u|\md m\le1.$$

\begin{figure}[ht]
\centering
\begin{tikzpicture}[scale=0.6]
\draw (0,0)--(-4,0)--(-2,2*1.73205080756887)--cycle;
\draw (-2,0)--(-3,1.73205080756887)--(-1,1.73205080756887)--cycle;
\draw (0,0)--(4,0)--(2,2*1.73205080756887)--cycle;
\draw (2,0)--(3,1.73205080756887)--(1,1.73205080756887)--cycle;
\draw (0,-0.5) node {\footnotesize{$2^{n+1}p_2$}};
\draw (-4,-0.5) node {\footnotesize{$p_1$}};
\draw (-2,2*1.73205080756887+0.5) node {\footnotesize{$2^{n+1}p_3$}};
\draw (4,-0.5) node {\footnotesize{$2^{n+2}p_2$}};
\draw (2,2*1.73205080756887+0.5) node {\footnotesize{$2^{n+1}p_2+2^{n+1}p_3$}};

\draw[fill=black] (0,0) circle (0.12);
\draw[fill=black] (4,0) circle (0.12);
\draw[fill=black] (2,2*1.73205080756887) circle (0.12);
\draw[fill=black] (-4,0) circle (0.12);
\draw[fill=black] (-2,2*1.73205080756887) circle (0.12);
\draw[fill=black] (2,0) circle (0.12);
\draw[fill=black] (1,1*1.73205080756887) circle (0.12);
\draw[fill=black] (3,1*1.73205080756887) circle (0.12);
\draw[fill=black] (-2,0) circle (0.12);
\draw[fill=black] (-1,1*1.73205080756887) circle (0.12);
\draw[fill=black] (-3,1*1.73205080756887) circle (0.12);
\end{tikzpicture}
\caption{The Ball $2B=B(2^{n+1}p_2,2^{n+1})$}\label{fig_RH}
\end{figure}
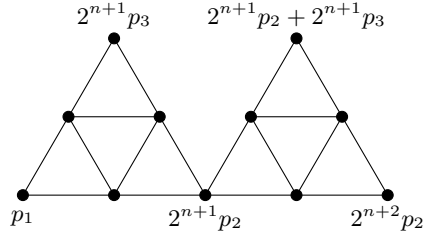

By induction and the standard $\frac{2}{5}$-$\frac{2}{5}$-$\frac{1}{5}$-algorithm Equation (\ref{eqn_2215}), we have
$$u(2^{n+1}p_2+p_2)=u(2^{n+1}p_2+p_3)=\left(\frac{3}{5}\right)^{n+1}.$$
Hence
$$|\nabla u|=\left(\frac{3}{5}\right)^{n+1}\text{ in }(2^{n+1}p_2,2^{n+1}p_2+p_2)\cup(2^{n+1}p_2,2^{n+1}p_2+p_3)\subseteq B.$$
By \ref{eqn_RH}, we have
$$\left(\frac{3}{5}\right)^{n+1}\le\mynorm{|\nabla u|}_{L^\infty(B)}\le\frac{C_H}{2^n}\dashint_{2B}|u|\md m\le\frac{C_H}{2^n},$$
consequently,
$$\left(\frac{6}{5}\right)^{n+1}\le2C_H\text{ for any }n\ge0,$$
contradiction! Hence \ref{eqn_RH} does not hold.
\end{proof}

Proposition \ref{prop_RH} justifies the introduction of the generalized reverse H\"older inequality \ref{eqn_GRH}, which we now show holds both on the $N$-dimensional Vicsek cable system and the Sierpi\'nski cable system.

\begin{myprop}\label{prop_GRH}
The generalized reverse H\"older inequality \ref{eqn_GRH} holds on the $N$-dimensional Vicsek cable system and the Sierpi\'nski cable system.
\end{myprop}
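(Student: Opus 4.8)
\emph{Proof idea.} The plan is to treat the two cable systems separately, noticing first that the estimate for small balls is elementary in both cases. On the $N$-dimensional Vicsek cable system one has $\beta=\log(3(2^N+1))/\log 3=\alpha+1$, so $\Phi(r)/\Psi(r)=1/r$ for every $r>0$, and \ref{eqn_GRH} then coincides with the reverse H\"older inequality \ref{eqn_RH}, which was proved in Proposition \ref{prop_RH_Vicsek}. So the real content is the Sierpi\'nski cable system, for which $\alpha=\log 3/\log 2$ and $\beta=\log 5/\log 2$, hence $\Phi(r)/\Psi(r)=1/r$ for $r\in(0,1)$ and $\Phi(r)/\Psi(r)=r^{\alpha-\beta}=r^{(\log 3-\log 5)/\log 2}$ for $r\in[1,+\infty)$. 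Fix a ball $B=B(x_0,r)$ and a function $u$ harmonic in $2B$. For $r$ below a fixed threshold $r_0$ the bound reduces to \ref{eqn_RH}: any $x\in B\setminus V$ lies in a cable on which $u$ is affine, and the connected component through $x$ of the intersection of that cable with $2B$ has length $\gtrsim r$; comparing the slope of $u$ on that component with its $L^1$-average over $2B$ and using \ref{eqn_VPhi} to estimate $m(2B)$ yields $|\nabla u(x)|\lesssim \frac 1r\dashint_{2B}|u|\md m$, which is $\lesssim \frac{\Phi(r)}{\Psi(r)}\dashint_{2B}|u|\md m$ on $(0,r_0]$.

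Now assume $r\ge r_0$ with $r_0$ a suitable constant. Fix $x\in B\setminus V$; it lies in the interior of a cable $[p,q]$, and since $r_0$ is large this cable is contained in $2B$, so harmonicity of $u$ in $2B$ gives $|\nabla u(x)|=|u(p)-u(q)|$. Choose the integer $m\ge 2$ with $2^m\asymp r$ for which the unique $m$-skeleton $W$ that contains $[p,q]$ still satisfies $W\subseteq 2B$; this is possible because the graph-diameter of an $m$-skeleton is $\asymp 2^m$ while $p$ lies within graph-distance $1$ of $x\in B$. Denote by $q_1,q_2,q_3$ the boundary points of $W$.

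The crux is a contraction estimate for the $\tfrac25$-$\tfrac25$-$\tfrac15$-algorithm (\ref{eqn_2215}): a direct case analysis on the triple of corner values shows that, passing from any cell to any one of its three sub-cells, the oscillation of the nodal values decreases by a factor at most $\tfrac35$. Iterating $m$ times down to the scale of individual cables gives $|u(p)-u(q)|\le(\tfrac35)^m\,\mathrm{osc}_{\{q_1,q_2,q_3\}}(u)\le 2(\tfrac35)^m\max_i|u(q_i)|$. To control $\max_i|u(q_i)|$ by the average of $|u|$ over $2B$, assume without loss of generality that $u(q_1)=\max_i|u(q_i)|>0$, and let $W_0\subseteq W$ be the side-$2^{m-2}$ sub-skeleton at the corner $q_1$. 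Applying (\ref{eqn_2215}) twice, the two boundary points of $W_0$ other than $q_1$ carry $u$-values $\ge(1-2(\tfrac35)^2)u(q_1)=\tfrac{7}{25}u(q_1)>0$, so by the minimum principle for harmonic functions on a cable system (immediate from the characterization of harmonic functions recalled in Section \ref{sec_cable}) we get $u\ge\tfrac{7}{25}u(q_1)$ on all of $W_0$. Since $W_0\subseteq 2B$ and $m(W_0)\asymp 3^m\asymp\Phi(r)\asymp m(2B)$ by \ref{eqn_VPhi}, this yields $u(q_1)\le\tfrac{25}{7}\dashint_{W_0}|u|\md m\le\tfrac{25}{7}\frac{m(2B)}{m(W_0)}\dashint_{2B}|u|\md m\lesssim\dashint_{2B}|u|\md m$.

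Combining the last two estimates, $|\nabla u(x)|\lesssim(\tfrac35)^m\dashint_{2B}|u|\md m\asymp r^{(\log 3-\log 5)/\log 2}\dashint_{2B}|u|\md m=r^{\alpha-\beta}\dashint_{2B}|u|\md m=\frac{\Phi(r)}{\Psi(r)}\dashint_{2B}|u|\md m$ for $r\ge 1$; taking $\esssup$ over $x\in B\setminus V$ gives \ref{eqn_GRH}. I expect the main obstacle to be the contraction lemma for the $\tfrac25$-$\tfrac25$-$\tfrac15$-algorithm — establishing the uniform factor $\tfrac35$ over all configurations of boundary values and recognizing that it reproduces precisely the exponent $\alpha-\beta$ — together with the combinatorial bookkeeping needed to nest the skeletons $W_0\subseteq W$ inside $2B$; the passage from the pointwise bound to the $L^1$-average via the minimum principle is then routine.
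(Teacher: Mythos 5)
Your proposal is correct and follows essentially the same route as the paper: reduce the Vicsek case to \ref{eqn_RH} via Proposition \ref{prop_RH_Vicsek}, handle small balls by the interval estimate, and for large balls on the Sierpi\'nski cable system combine the $(3/5)^n$ oscillation decay of harmonic functions across $n$ subdivision levels with the corner sub-skeleton $W_0$, the $\tfrac{7}{25}u(q_1)$ lower bound from the $\tfrac25$-$\tfrac25$-$\tfrac15$ rule, and the maximum principle to control $\max_i|u(q_i)|$ by $\dashint_{2B}|u|\,\md m$. The only (harmless) difference is that you prove the per-level $\tfrac35$ contraction of the nodal oscillation directly by iterating Equation (\ref{eqn_2215}) on nested skeletons, whereas the paper transfers $u$ to a harmonic function on the gasket via an affine map and cites the H\"older estimates of \cite{Str00,THP14} for the same decay.
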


\begin{myrmk}
{\em 
In the small scale, \ref{eqn_GRH} behaves as \ref{eqn_RH}. However, in the large scale, the fractal property comes into effect.
}
\end{myrmk}

\begin{proof}
For the $N$-dimensional Vicsek cable system, since $\frac{\Psi(r)}{\Phi(r)}=r$ for any $r\in(0,+\infty)$, \ref{eqn_GRH} reduces to \ref{eqn_RH}, so that the result follows from Proposition \ref{prop_RH_Vicsek}. Therefore, we only need to consider the Sierpi\'nski cable system. Let $B$ be a ball with radius $r$. If $r\in(0,4)$, then the result follows from the result on intervals. We may thus assume that $r\in[4,+\infty)$.

For any $x\in B\backslash V$, there exist $p,q\in V\cap 2B$ with $|p-q|=1$ such that $x\in(p,q)\subseteq 2B$. Since $u$ is harmonic in $2B$, we have $|\nabla u(x)|=|u(p)-u(q)|$. Let $n\ge2$ be the positive integer satisfying $2^n\le r<2^{n+1}$, then there exists an $n$-skeleton $W$ satisfying $p,q\in W\subseteq 2B$ and

$$m(W)=3^{n+1}\le m(2B)\le C_R\Phi(2r)\le C_R(2\cdot2^{n+1})^\alpha=C_R3^{n+2},$$
where $C_R$ is the constant in \ref{eqn_VPhi}.

Let $q_1,q_2,q_3$ be the boundary points of $W$, see Figure \ref{fig_look_Sierpinski}. Let $F:\R^2\to\R^2$ be the affine mapping that maps $p_i$ to $q_i$, $i=1,2,3$. Let $v$ be the harmonic function on the Sierpi\'nski gasket $K$ with $v(p_i)=u(q_i)$, $i=1,2,3$ (see \cite[Proposition 3.2.1, Example 3.2.6]{Kig01}). Noting that $W\cap V=F(V_n)$, we have $v=u\circ F$ on $V_n$ or $u=v\circ F^{-1}$ on $W\cap V$. Let $i_1,\ldots,i_n\in\{1,2,3\}$ satisfy $F^{-1}(p),F^{-1}(q)\in f_{i_1}\circ\ldots\circ f_{i_n}(K)$.

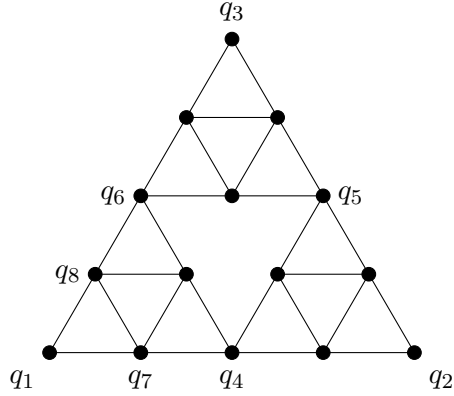
\begin{figure}[ht]
\centering
\begin{tikzpicture}[scale=1.2]
\draw (-0.3,-0.3) node {$q_1$};
\draw (4.3,-0.3) node {$q_2$};
\draw (2,2*1.73205080756887+0.3) node {$q_3$};

\draw (2,-0.3) node {$q_4$};
\draw (3+0.3,1*1.73205080756887) node {$q_5$};
\draw (1-0.3,1*1.73205080756887) node {$q_6$};

\draw (1,-0.3) node {$q_7$};
\draw (0.5-0.3,0.5*1.73205080756887) node {$q_8$};

\draw (0,0)--(4,0)--(2,2*1.73205080756887)--cycle;
\draw (2,0)--(3,1.73205080756887)--(1,1.73205080756887)--cycle;
\draw (1,0)--(1.5,0.5*1.73205080756887)--(0.5,0.5*1.73205080756887)--cycle;
\draw (3,0)--(3.5,0.5*1.73205080756887)--(2.5,0.5*1.73205080756887)--cycle;
\draw (2,1.73205080756887)--(1.5,1.5*1.73205080756887)--(2.5,1.5*1.73205080756887)--cycle;

\draw[fill=black] (0.5,0.5*1.73205080756887) circle (0.3/4);
\draw[fill=black] (1,1*1.73205080756887) circle (0.3/4);
\draw[fill=black] (1.5,1.5*1.73205080756887) circle (0.3/4);
\draw[fill=black] (2,2*1.73205080756887) circle (0.3/4);
\draw[fill=black] (2.5,1.5*1.73205080756887) circle (0.3/4);
\draw[fill=black] (3,1*1.73205080756887) circle (0.3/4);
\draw[fill=black] (3.5,0.5*1.73205080756887) circle (0.3/4);

\draw[fill=black] (0,0) circle (0.3/4);
\draw[fill=black] (1,0) circle (0.3/4);
\draw[fill=black] (2,0) circle (0.3/4);
\draw[fill=black] (3,0) circle (0.3/4);
\draw[fill=black] (4,0) circle (0.3/4);

\draw[fill=black] (1.5,0.5*1.73205080756887) circle (0.3/4);
\draw[fill=black] (2.5,0.5*1.73205080756887) circle (0.3/4);
\draw[fill=black] (2,1*1.73205080756887) circle (0.3/4);

\end{tikzpicture}
\caption{Looking at an $(n-2)$-Skeleton in the Sierpi\'nski Cable System}\label{fig_look_Sierpinski}
\end{figure}

Recall that the oscillation $\mathrm{Osc}(u,D)$ of a function $u$ on a set $D$ is defined as
$$\mathrm{Osc}(u,D):=\sup_Du-\inf_Du.$$

By \cite[THEOREM 8.3]{Str00} or \cite[Theorem 1.3, Example 5.1]{THP14} about H\"older estimates of harmonic functions on the Sierpi\'nski gasket, we have

\begin{align*}
&|u(p)-u(q)|=|v(F^{-1}(p))-v(F^{-1}(q))|\le\mathrm{Osc}(v,f_{i_1}\circ\ldots\circ f_{i_n}(K))\le\left(\frac{3}{5}\right)^{n}\mathrm{Osc}(v,K)\\
&=\left(\frac{3}{5}\right)^n\max\myset{|v(p_i)-v(p_j)|:i,j=1,2,3}=\left(\frac{3}{5}\right)^n\max\myset{|u(q_i)-u(q_j)|:i,j=1,2,3}.
\end{align*}

Without loss of generality, we may assume that $u(q_1)>0$ and $|u(q_1)|=\max_{i=1,2,3}|u(q_i)|$. Let $W_0$ be the $(n-2)$-skeleton with a boundary point $q_1$ satisfying $W_0\subseteq W$. Let $q_7,q_8$ denote the other two boundary points of $W_0$, see Figure \ref{fig_look_Sierpinski}. By the standard $\frac{2}{5}$-$\frac{2}{5}$-$\frac{1}{5}$-algorithm Equation (\ref{eqn_2215}), we have
\begin{align*}
u(q_7)&=\frac{2}{5}u(q_1)+\frac{2}{5}u(q_4)+\frac{1}{5}u(q_6)\\
&=\frac{2}{5}u(q_1)+\frac{2}{5}\left(\frac{2}{5}u(q_1)+\frac{2}{5}u(q_2)+\frac{1}{5}u(q_3)\right)+\frac{1}{5}\left(\frac{2}{5}u(q_1)+\frac{1}{5}u(q_2)+\frac{2}{5}u(q_3)\right)\\
&=\frac{16}{25}u(q_1)+\frac{5}{25}u(q_2)+\frac{4}{25}u(q_3)\ge\frac{7}{25}u(q_1),
\end{align*}
and
$$u(q_8)=\frac{16}{25}u(q_1)+\frac{4}{25}u(q_2)+\frac{5}{25}u(q_3)\ge\frac{7}{25}u(q_1).$$
By the maximum principle, we have
$$u\ge\frac{7}{25}u(q_1)>0\text{ on }W_0.$$
Hence
\begin{align*}
&\max\myset{|u(q_i)-u(q_j)|:i,j=1,2,3}\le 2u(q_1)\\
&\le2\cdot\frac{25}{7}\dashint_{W_0}u\md m\le\frac{50}{7}\frac{1}{3^{n-1}}\int_{2B}|u|\md m\le\frac{50\cdot3^{3}C_R}{7}\dashint_{2B}|u|\md m,
\end{align*}
hence
\begin{align*}
|\nabla u(x)|&=|u(p)-u(q)|\le\frac{50\cdot3^{3}C_R}{7}\left(\frac{3}{5}\right)^n\dashint_{2B}|u|\md m\\
&=\frac{250\cdot3^{2}C_R}{7}\frac{1}{(2^{n+1})^{\beta-\alpha}}\dashint_{2B}|u|\md m\le\frac{C}{r^{\beta-\alpha}}\dashint_{2B}|u|\md m,
\end{align*}
where $C=\frac{250\cdot3^{2}C_R}{7}$, hence
$$\mynorm{|\nabla u|}_{L^\infty(B)}\le\frac{C}{r^{\beta-\alpha}}\dashint_{2B}|u|\md m.$$
\end{proof}

\section{Proof of Theorem \ref{thm_grad}}\label{sec_grad}

We start by proving gradient estimates for the solutions of Poisson equation using

\noindent \ref{eqn_GRH}, see \cite[Theorem 3.2]{CJKS20} for a similar result using \ref{eqn_RH}.

\begin{myprop}\label{prop_Poi_grad}
Let $(X,d,m,\calE,\calF)$ be an unbounded cable system satisfying \ref{eqn_VPhi},

\noindent \hyperlink{eqn_LSPsiq}{LS($\Psi,q$)}, \ref{eqn_CS} and \ref{eqn_GRH}. Then for any $p\in\left[\frac{q}{q-1},+\infty\right)$, there exists $C\in(0,+\infty)$ such that for any ball $B=B(x_0,r)$, for any $f\in L^\infty(2B)$, if $u\in\calF$ satisfies $\Delta u=f$ in $2B$, then for $m$-a.e. $x\in B$, we have
$$|\nabla u(x)|\le C\left(\frac{\Phi(r)}{\Psi(r)}\dashint_{2B}|u|\md m+F_2(x)\right),$$
where
$$F_2(x)=\sum_{j\le[\log_2r]}\Phi(2^{j})\left(\dashint_{B(x,2^j)}|f|^p\md m\right)^{1/p}.$$
\end{myprop}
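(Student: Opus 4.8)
The plan is to carry out the telescoping argument from the proof of Lemma \ref{lem_Poi_point}, but with \ref{eqn_GRH} playing the role of the $L^1$-mean value inequality (Lemma \ref{lem_mv}), now applied to the \emph{gradients} of harmonic functions rather than to the functions themselves. Fix $p\in\left[\frac{q}{q-1},+\infty\right)$, a ball $B=B(x_0,r)$, a function $f\in L^\infty(2B)$, and $u\in\calF$ with $\Delta u=f$ in $2B$; put $j_0=[\log_2r]$. For $m$-a.e.\ $x\in B$ (i.e.\ $x\notin V$) and each $j\le j_0$, Lemma \ref{lem_Poi_exist} furnishes $u_j\in\calF_{B(x,2^j)}$ with $\Delta u_j=f$ in $B(x,2^j)$ and $\dashint_{B(x,2^j)}|u_j|\md m\lesssim\Psi(2^j)\big(\dashint_{B(x,2^j)}|f|^p\md m\big)^{1/p}$. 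Since $u$ and the $u_j$ solve Poisson equations with bounded right-hand side, they are $C^1$ off $V$ on the relevant cables, so for $m$-a.e.\ $x\in B$ one has the pointwise telescoping identity
$$\nabla u(x)=\nabla(u-u_{j_0})(x)+\sum_{j=k+1}^{j_0}\nabla(u_j-u_{j-1})(x)+\nabla u_k(x)\qquad(k\le j_0),$$
in which $u-u_{j_0}$ is harmonic in $B(x,2^{j_0})$ and $u_j-u_{j-1}$ is harmonic in $B(x,2^{j-1})$.

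Next I would control the harmonic terms. Applying \ref{eqn_GRH} on the ball $B(x,2^{j_0-1})$ to $u-u_{j_0}$, bounding $|u-u_{j_0}|\le|u|+|u_{j_0}|$, and using \ref{eqn_VPhi} (hence \ref{eqn_VD}, which gives $m(2B)\lesssim m(B(x,2^{j_0}))$ because $2^{j_0}>r/2$) together with the $L^1$-bound for $u_{j_0}$, one obtains
$$\mynorm{|\nabla(u-u_{j_0})|}_{L^\infty(B(x,2^{j_0-1}))}\lesssim\frac{\Phi(r)}{\Psi(r)}\dashint_{2B}|u|\md m+\Phi(2^{j_0})\left(\dashint_{B(x,2^{j_0})}|f|^p\md m\right)^{1/p},$$
using the elementary doubling-type bounds $\Phi(2^{j_0-1})/\Psi(2^{j_0-1})\asymp\Phi(r)/\Psi(r)$ and $\Phi(2^{j_0-1})\Psi(2^{j_0})/\Psi(2^{j_0-1})\asymp\Phi(2^{j_0})$. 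Similarly, \ref{eqn_GRH} on $B(x,2^{j-2})$ applied to $u_j-u_{j-1}$, combined with the $L^1$-bounds for $u_j,u_{j-1}$ (and \ref{eqn_VD} to pass from $B(x,2^{j-1})$ to $B(x,2^j)$), yields
$$\mynorm{|\nabla(u_j-u_{j-1})|}_{L^\infty(B(x,2^{j-2}))}\lesssim\Phi(2^j)\left(\dashint_{B(x,2^j)}|f|^p\md m\right)^{1/p}+\Phi(2^{j-1})\left(\dashint_{B(x,2^{j-1})}|f|^p\md m\right)^{1/p}.$$
Summing over $j\le j_0$ gives a constant times $F_2(x)$, which is finite since $f\in L^\infty(2B)$ and $\sum_{j\le j_0}\Phi(2^j)<+\infty$; in particular the series in the telescoping identity converges absolutely.

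The delicate point is the remainder $\nabla u_k(x)$: Lemma \ref{lem_Poi_exist} controls $u_k$ only in $L^1$, not $\nabla u_k$ in $L^\infty$, and since $u_k$ is not harmonic, \ref{eqn_GRH} is of no direct use. Here I would invoke the fact that $X$ is a cable system: for $m$-a.e.\ $x$ and all sufficiently negative $k$, the ball $B(x,2^k)$ lies inside a single open cable, on which $u_k$ solves the one-dimensional equation $u_k''=-f$ and vanishes at both endpoints. Integrating this twice gives $\mynorm{|\nabla u_k|}_{L^\infty(B(x,2^k))}\lesssim\int_{B(x,2^k)}|f|\md m\lesssim 2^k\mynorm{f}_{L^\infty(2B)}\to0$ as $k\to-\infty$. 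Letting $k\to-\infty$ in the telescoping identity, taking absolute values at $x$, bounding each term by its sup over a ball containing $x$, and assembling the three estimates gives, for $m$-a.e.\ $x\in B$,
$$|\nabla u(x)|\lesssim\frac{\Phi(r)}{\Psi(r)}\dashint_{2B}|u|\md m+F_2(x),$$
which is the assertion. What remains is routine: bookkeeping of the constants (which ultimately depend only on $\alpha$, $\beta$, $C_R$, $C_L$, $C_S$, $C_H$) and checking the elementary scaling comparisons for $\Phi$, $\Psi$ and $\Phi/\Psi$. I expect the cable-system remainder estimate to be the only step requiring more than a mechanical adaptation of Lemma \ref{lem_Poi_point}.
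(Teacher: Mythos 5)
Your argument is correct, and the core of it (the dyadic decomposition $u=(u-u_{j_0})+\sum_j(u_j-u_{j-1})+u_k$ with $u_j$ from Lemma \ref{lem_Poi_exist}, and \ref{eqn_GRH} applied to the harmonic differences) is exactly the paper's; where you genuinely diverge is in how the limit is taken and how the non-harmonic remainder is killed. The paper never telescopes gradients pointwise: it telescopes the difference quotient $\frac{|u(x)-u(y)|}{d(x,y)}$ for $y$ on the same cable as $x$, bounds each harmonic increment by $d(x,y)\,\mynorm{|\nabla(\cdot)|}_{L^\infty}$ via \ref{eqn_GRH}, and disposes of the remainder $u_{k_0+3}$ by the pointwise estimate of Lemma \ref{lem_Poi_point}, which gives $|u_{k_0+3}(x)|+|u_{k_0+3}(y)|\lesssim d(x,y)^2\mynorm{f}_{L^\infty(2B)}$, hence $o(d(x,y))$; finally $|\nabla u(x)|$ is recovered as $\lim_{y\to x}|u(x)-u(y)|/d(x,y)$. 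You instead differentiate the telescoping identity at $x$ and control $\nabla u_k(x)$ by the one-dimensional structure: for $x\notin V$ and $2^k$ small, $B(x,2^k)$ is an interval in a single cable, $u_k\in\calF_{B(x,2^k)}$ vanishes at its endpoints, $u_k''=-f$ there, so Rolle plus one integration gives $\mynorm{|\nabla u_k|}_{L^\infty(B(x,2^k))}\le\int_{B(x,2^k)}|f|\md m\lesssim 2^k\mynorm{f}_{L^\infty(2B)}\to0$. This is a sound and arguably more elementary route; it bypasses Lemma \ref{lem_Poi_point} altogether (and with it the only place where \ref{eqn_CS}, via the $L^1$-mean value inequality, enters this proposition), at the price of leaning harder on the explicit cable geometry, both for the ODE remainder estimate and for the implicit step that the pointwise values $|\nabla(u_j-u_{j-1})(x)|$, $|\nabla(u-u_{j_0})(x)|$ at the center are controlled by the essential suprema appearing in \ref{eqn_GRH} — which you should state is legitimate because harmonic functions on a cable system are linear on cables, so their gradients are locally constant near $x\notin V$, and because $f\in L^\infty$ makes $u$ and the $u_j$ genuinely $C^1$ near $x$ on the cable. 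The paper's difference-quotient formulation is the more robust of the two (it would survive in settings where a pointwise remainder estimate like Lemma \ref{lem_Poi_point} is available but no one-dimensional ODE trick is), while yours uses marginally fewer hypotheses and avoids one auxiliary lemma.
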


\begin{proof}
Let $x\in B\setminus V$ be fixed. Let $y\in B\setminus V$ be such that $[x,y]\subseteq B$, and $d(x,y)<\min\{\frac{r}{16},\frac{1}{16}\}$. If $y$ is close enough to $x$, then one can also find $p,q\in V$ such that $\{p,q\}\in E$ and $x,y\in(p,q)$. Let $k_0=[\log_2d(x,y)]$ and $k_1=[\log_2r]$, then $k_0+3\le k_1$. For any $k=k_0+3,\ldots,k_1$, by Lemma \ref{lem_Poi_exist}, there exists $u_k\in\calF_{B(x,2^k)}$ such that $\Delta u_k=f$ in $B(x,2^k)$ and
$$\dashint_{B(x,2^{k-1})}|u_k|\md m\lesssim\dashint_{B(x,2^k)}|u_k|\md m\lesssim\Psi(2^k)\left(\dashint_{B(x,2^k)}|f|^p\md m\right)^{1/p}.$$
Then
\begin{align*}
&|u(x)-u(y)|\\
&\le|(u-u_{k_1})(x)-(u-u_{k_1})(y)|\\
&+\sum_{k=k_0+4}^{k_1}|(u_k-u_{k-1})(x)-(u_k-u_{k-1})(y)|\\
&+|u_{k_0+3}(x)|+|u_{k_0+3}(y)|.
\end{align*}
For any $k=k_0+4,\ldots,k_1$, we have $d(x,y)<2^{k_0+1}\le2^{k-2}$, that is, $y\in B(x,2^{k-2})$.

\noindent Since $\Delta(u-u_{k_1})=0$ in $B(x,2^{k_1})$, by \ref{eqn_GRH}, we have
\begin{align*}
&|(u-u_{k_1})(x)-(u-u_{k_1})(y)|\\
&\le d(x,y)\mynorm{|\nabla(u-u_{k_1})|}_{L^\infty(B(x,2^{k_1-1}))}\\
&\lesssim d(x,y)\frac{\Phi(2^{k_1-1})}{\Psi(2^{k_1-1})}\dashint_{B(x,2^{k_1})}|u-u_{k_1}|\md m\\
&\le d(x,y)\frac{\Phi(2^{k_1-1})}{\Psi(2^{k_1-1})}\left(\dashint_{B(x,2^{k_1})}|u|\md m+\dashint_{B(x,2^{k_1})}|u_{k_1}|\md m\right)\\
&\lesssim d(x,y)\frac{\Phi(2^{k_1-1})}{\Psi(2^{k_1-1})}\left(\dashint_{2B}|u|\md m+\Psi(2^{k_1})\left(\dashint_{B(x,2^{k_1})}|f|^p\md m\right)^{1/p}\right)\\
&\lesssim d(x,y)\left(\frac{\Phi(r)}{\Psi(r)}\dashint_{2B}|u|\md m+\Phi(2^{k_1})\left(\dashint_{B(x,2^{k_1})}|f|^p\md m\right)^{1/p}\right).
\end{align*}

Similarly, for any $k=k_0+4,\ldots,k_1$, since $\Delta (u_k-u_{k-1})=0$ in $B(x,2^{k-1})$, by \ref{eqn_GRH}, we have
\begin{align*}
&|(u_k-u_{k-1})(x)-(u_k-u_{k-1})(y)|\\
&\le d(x,y)\mynorm{|\nabla(u_k-u_{k-1})|}_{L^\infty(B(x,2^{k-2}))}\\
&\lesssim d(x,y)\frac{\Phi(2^{k-2})}{\Psi(2^{k-2})}\dashint_{B(x,2^{k-1})}|u_k-u_{k-1}|\md m\\
&\le d(x,y)\frac{\Phi(2^{k-2})}{\Psi(2^{k-2})}\left(\dashint_{B(x,2^{k-1})}|u_k|\md m+\dashint_{B(x,2^{k-1})}|u_{k-1}|\md m\right)\\
&\lesssim d(x,y)\frac{\Phi(2^{k-2})}{\Psi(2^{k-2})}\left(\Psi(2^k)\left(\dashint_{B(x,2^k)}|f|^p\md m\right)^{1/p}+\Psi(2^{k-1})\left(\dashint_{B(x,2^{k-1})}|f|^p\md m\right)^{1/p}\right)\\
&\lesssim d(x,y)\left(\Phi(2^k)\left(\dashint_{B(x,2^k)}|f|^p\md m\right)^{1/p}+\Phi(2^{k-1})\left(\dashint_{B(x,2^{k-1})}|f|^p\md m\right)^{1/p}\right).
\end{align*}
Therefore,
\begin{align*}
&d(x,y)^{-1} \left(|(u-u_{k_1})(x)-(u-u_{k_1})(y)|\right.\\
&\left.\qquad\qquad+\sum_{k=k_0+4}^{k_1}|(u_k-u_{k-1})(x)-(u_k-u_{k-1})(y)|\right)\\
&\lesssim \frac{\Phi(r)}{\Psi(r)}\dashint_{2B}|u|\md m+\sum_{k=k_0+3}^{k_1}\Phi(2^k)\left(\dashint_{B(x,2^k)}|f|^p\md m\right)^{1/p},
\end{align*}
so letting $d(x,y)\downarrow0$, or equivalently, $k_0\to-\infty$, we have
\begin{align*}
&\varlimsup_{d(x,y)\downarrow0} d(x,y)^{-1} \left(|(u-u_{k_1})(x)-(u-u_{k_1})(y)|\right.\\
&\left.\qquad\qquad\qquad\qquad+\sum_{k=k_0+4}^{k_1}|(u_k-u_{k-1})(x)-(u_k-u_{k-1})(y)|\right)\\
&\lesssim \frac{\Phi(r)}{\Psi(r)}\dashint_{2B}|u|\md m+\sum_{k\leq k_1}\Phi(2^k)\left(\dashint_{B(x,2^k)}|f|^p\md m\right)^{1/p}.
\end{align*}
Since $\Delta u_{k_0+3}=f$ in $B(x,2^{k_0+3})$, by Lemma \ref{lem_Poi_point}, we have
\begin{align*}
&|u_{k_0+3}(x)|\\
&\lesssim\dashint_{B(x,2^{k_0+3})}|u_{k_0+3}|\md m+\sum_{j\le k_0+2}\Psi(2^j)\left(\dashint_{B(x,2^j)}|f|^p\md m\right)^{1/p}\\
&\lesssim\Psi(2^{k_0+3})\left(\dashint_{B(x,2^{k_0+3})}|f|^p\md m\right)^{1/p}+\sum_{j\le k_0+2}\Psi(2^j)\left(\dashint_{B(x,2^j)}|f|^p\md m\right)^{1/p}\\
&\le\sum_{j\le k_0+3}\Psi(2^j)\mynorm{f}_{L^\infty(2B)}\\
&=\sum_{j\le k_0+3}(2^j)^2\mynorm{f}_{L^\infty(2B)}\\
&\asymp2^{2k_0}\mynorm{f}_{L^\infty(2B)}\\
&\asymp d(x,y)^2\mynorm{f}_{L^\infty(2B)},
\end{align*}
where, in the fifth line, we use the fact that $d(x,y)<\frac{1}{16}$ which implies that $2^j\le1$ for any $j\le k_0+3$. 

Also, since $d(x,y)<2^{k_0+1}<2^{k_0+2}$, that is, $y\in B(x,2^{k_0+2})$, Lemma \ref{lem_Poi_point} implies by analogous computations that

\begin{align*}
|u_{k_0+3}(y)|&\lesssim\Psi(2^{k_0+3})\left(\dashint_{B(x,2^{k_0+3})}|f|^p\md m\right)^{1/p}+\sum_{j\le k_0+2}\Psi(2^j)\left(\dashint_{B(y,2^j)}|f|^p\md m\right)^{1/p}\\
&\le\sum_{j\le k_0+3}\Psi(2^j)\mynorm{f}_{L^\infty(2B)}\\
&\asymp d(x,y)^2\mynorm{f}_{L^\infty(2B)}.
\end{align*}
Consequently,
$$|u_{k_0+3}(x)|+|u_{k_0+3}(y)|\lesssim d(x,y)^2\mynorm{f}_{L^\infty(2B)}.$$
Letting $d(x,y)\downarrow0$, or equivalently, $k_0\to-\infty$, we have
$$\lim_{d(x,y)\downarrow0} d(x,y)^{-1}(|u_{k_0+3}(x)|+|u_{k_0+3}(y)|)=0.$$
Hence,
$$|\nabla u(x)|=\lim_{d(x,y)\downarrow0}\frac{|u(x)-u(y)|}{d(x,y)}\lesssim \frac{\Phi(r)}{\Psi(r)}\dashint_{2B}|u|\md m+\sum_{k\le k_1}\Phi(2^k)\left(\dashint_{B(x,2^k)}|f|^p\md m\right)^{1/p}.$$
\end{proof}

According to the main idea of the proof of \cite[Theorem 3.2]{Jiang15}, gradient estimates for the solutions of Poisson equation can be used to derive gradient estimate for the heat kernel. Thanks to the result of Proposition \ref{prop_Poi_grad}, we can now apply this idea to our setting, thus completing the proof of Theorem \ref{thm_grad}: 

\begin{proof}[Proof of Theorem \ref{thm_grad}]
By \cite[THEOREM 4]{Dav97}, we have the following estimate of the time derivative of the heat kernel:
\begin{equation}\label{eqn_hkt}
\left|\frac{\dd}{\dd t}p_t(x,y)\right|\le\frac{1}{tV\left(x,\Psi^{-1}(C_1t)\right)}\exp\left(-\Upsilon\left(C_2d(x,y),t\right)\right).
\end{equation}
In particular, $y\mapsto \frac{\dd}{\dd t}p_t(x,y)$ is an $L^\infty$ function, for any $t>0$ and $m$-a.e. $x\in X$.

For $m$-a.e. $x\in X$, the function $(t,y)\mapsto p_t(x,y)$ is a solution of the heat equation $\Delta_yp_t(x,y)+\frac{\dd}{\dd t}p_t(x,y)=0$ (here we use $\Delta_y,\nabla_y$ to mean that the operators act on the variable $y$). For any $r\in(0,+\infty)$, by Proposition \ref{prop_Poi_grad}, for $m$-a.e. $y\in X$, we have
\begin{align*}
&|\nabla_yp_t(x,y)|\\
&\lesssim\frac{\Phi(r)}{\Psi(r)}\dashint_{B(y,2r)}p_t(x,z)m(\md z)+\sum_{j\le[\log_2r]}\Phi(2^j)\left(\dashint_{B(y,2^j)}|\frac{\dd}{\dd t}p_t(x,z)|^pm(\md z)\right)^{1/p}.
\end{align*}
Letting $r=\Psi^{-1}(t)$, we have
\begin{align*}
&|\nabla_yp_t(x,y)|\\
&\lesssim\frac{\Phi(\Psi^{-1}(t))}{t}\dashint_{B(y,2\Psi^{-1}(t))}p_t(x,z)m(\md z)\\
&+\sum_{j\le[\log_2\Psi^{-1}(t)]}\Phi(2^j)\left(\dashint_{B(y,2^j)}|\frac{\dd}{\dd t}p_t(x,z)|^pm(\md z)\right)^{1/p}.
\end{align*}
We now distinguish two cases: first, we assume that $d(x,y)\ge4\Psi^{-1}(t)$; then, for any $z\in B(y,2\Psi^{-1}(t))$, we have $d(x,z)\ge\frac{1}{2}d(x,y)$, and for any $j\le[\log_2\Psi^{-1}(t)]$, for any $z\in B(y,2^j)$, we have $d(x,z)\ge\frac{1}{2}d(x,y)$. By \ref{eqn_UHK}, we therefore obtain
\begin{align*}
\dashint_{B(y,2\Psi^{-1}(t))}p_t(x,z)m(\md z)&\le\dashint_{B(y,2\Psi^{-1}(t))}\frac{1}{V(x,\Psi^{-1}(C_1t))}\exp\left(-\Upsilon\left(C_2d(x,z),t\right)\right)m(\md z)\\
&\le\dashint_{B(y,2\Psi^{-1}(t))}\frac{1}{V(x,\Psi^{-1}(C_1t))}\exp\left(-\Upsilon\left(\frac{C_2}{2}d(x,y),t\right)\right)m(\md z)\\
&=\frac{1}{V(x,\Psi^{-1}(C_1t))}\exp\left(-\Upsilon\left(\frac{C_2}{2}d(x,y),t\right)\right).
\end{align*}
Moreover, by Equation (\ref{eqn_hkt}), we have
\begin{align*}
&\sum_{j\le[\log_2\Psi^{-1}(t)]}\Phi(2^j)\left(\dashint_{B(y,2^j)}|\frac{\dd}{\dd t}p_t(x,z)|^pm(\md z)\right)^{1/p}\\
&\le\sum_{j\le[\log_2\Psi^{-1}(t)]}\Phi(2^j)\left(\dashint_{B(y,2^j)}\left(\frac{1}{tV\left(x,\Psi^{-1}(C_1t)\right)}\exp\left(-\Upsilon\left(C_2d(x,z),t\right)\right)\right)^pm(\md z)\right)^{1/p}\\
&\le\sum_{j\le[\log_2\Psi^{-1}(t)]}\Phi(2^j)\left(\dashint_{B(y,2^j)}\left(\frac{1}{tV\left(x,\Psi^{-1}(C_1t)\right)}\exp\left(-\Upsilon\left(\frac{C_2}{2}d(x,y),t\right)\right)\right)^pm(\md z)\right)^{1/p}\\
&=\sum_{j\le[\log_2\Psi^{-1}(t)]}\Phi(2^j)\frac{1}{tV\left(x,\Psi^{-1}(C_1t)\right)}\exp\left(-\Upsilon\left(\frac{C_2}{2}d(x,y),t\right)\right).
\end{align*}
For any $r\in(0,1)$, we have
$$\sum_{j\le[\log_2r]}\Phi(2^j)=\sum_{j\le[\log_2r]}2^j\asymp2^{[\log_2r]}\asymp r=\Phi(r),$$
while for any $r\in[1,+\infty)$, 
$$\sum_{j\le[\log_2r]}\Phi(2^j)=\sum_{j\le-1}2^j+\sum_{j=0}^{[\log_2r]}(2^j)^\alpha\asymp1+r^\alpha\asymp r^\alpha=\Phi(r).$$
Hence
$$\sum_{j\le[\log_2r]}\Phi(2^j)\asymp\Phi(r)\text{ for any }r\in(0,+\infty),$$
and consequently,
\begin{align*}
&\sum_{j\le[\log_2\Psi^{-1}(t)]}\Phi(2^j)\left(\dashint_{B(y,2^j)}|\frac{\dd}{\dd t}p_t(x,z)|^pm(\md z)\right)^{1/p}\\
&\lesssim\frac{\Phi(\Psi^{-1}(t))}{tV\left(x,\Psi^{-1}(C_1t)\right)}\exp\left(-\Upsilon\left(\frac{C_2}{2}d(x,y),t\right)\right).
\end{align*}
Finally, we obtain:
\begin{align*}
&|\nabla_yp_t(x,y)|\\
&\lesssim\frac{\Phi(\Psi^{-1}(t))}{t}\frac{1}{V(x,\Psi^{-1}(C_1t))}\exp\left(-\Upsilon\left(\frac{C_2}{2}d(x,y),t\right)\right)\\
&+\frac{\Phi(\Psi^{-1}(t))}{tV\left(x,\Psi^{-1}(C_1t)\right)}\exp\left(-\Upsilon\left(\frac{C_2}{2}d(x,y),t\right)\right)\\
&=\frac{2\Phi(\Psi^{-1}(t))}{tV\left(x,\Psi^{-1}(C_1t)\right)}\exp\left(-\Upsilon\left(\frac{C_2}{2}d(x,y),t\right)\right).
\end{align*}
The proof is thus complete in the case $d(x,y)\geq 4\Psi^{-1}(t)$.

We now assume that $d(x,y)<4\Psi^{-1}(t)$. Then,
\begin{align*}
&|\nabla_yp_t(x,y)|\\
&\lesssim\frac{\Phi(\Psi^{-1}(t))}{t}\dashint_{B(y,2\Psi^{-1}(t))}p_t(x,z)m(\md z)\\
&+\sum_{j\le[\log_2\Psi^{-1}(t)]}\Phi(2^j)\left(\dashint_{B(y,2^j)}|\frac{\dd}{\dd t}p_t(x,z)|^pm(\md z)\right)^{1/p}\\
&\le\frac{\Phi(\Psi^{-1}(t))}{t}\dashint_{B(y,2\Psi^{-1}(t))}\frac{1}{V(x,\Psi^{-1}(C_1t))}m(\md z)\\
&+\sum_{j\le[\log_2\Psi^{-1}(t)]}\Phi(2^j)\left(\dashint_{B(y,2^j)}\left(\frac{1}{tV(x,\Psi^{-1}(C_1t))}\right)^pm(\md z)\right)^{1/p}\\
&=\frac{\Phi(\Psi^{-1}(t))}{t}\frac{1}{V(x,\Psi^{-1}(C_1t))}+\sum_{j\le[\log_2\Psi^{-1}(t)]}\Phi(2^j)\frac{1}{tV(x,\Psi^{-1}(C_1t))}\\
&\asymp\frac{\Phi(\Psi^{-1}(t))}{t}\frac{1}{V(x,\Psi^{-1}(C_1t))}+\Phi(\Psi^{-1}(t))\frac{1}{tV(x,\Psi^{-1}(C_1t))}\\
&=\frac{2\Phi(\Psi^{-1}(t))}{tV(x,\Psi^{-1}(C_1t))}\\
&=\frac{2\Phi(\Psi^{-1}(t))}{tV(x,\Psi^{-1}(C_1t))}\exp\left(-\Upsilon\left(\frac{C_2}{2}d(x,y),t\right)\right)\exp\left(+\Upsilon\left(\frac{C_2}{2}d(x,y),t\right)\right)\\
&\le\frac{2\Phi(\Psi^{-1}(t))}{tV(x,\Psi^{-1}(C_1t))}\exp\left(-\Upsilon\left(\frac{C_2}{2}d(x,y),t\right)\right)\exp\left(+\Upsilon\left(2C_2\Psi^{-1}(t),t\right)\right),
\end{align*}
where
\begin{align*}
\Upsilon\left(2C_2\Psi^{-1}(t),t\right)=\sup_{s\in(0,+\infty)}\left(\frac{2C_2\Psi^{-1}(t)}{s}-\frac{t}{\Psi(s)}\right)=\sup_{s\in(0,+\infty)}\left(2C_2\frac{\Psi^{-1}(t)}{\Psi^{-1}(s)}-\frac{t}{s}\right)
\end{align*}
is bounded from above by some positive constant depending only on $C_2$ and $\beta$ (see Lemma \ref{lem_unibd} below), hence
$$|\nabla_yp_t(x,y)|\lesssim\frac{\Phi(\Psi^{-1}(t))}{tV(x,\Psi^{-1}(C_1t))}\exp\left(-\Upsilon\left(\frac{C_2}{2}d(x,y),t\right)\right).$$

Therefore
\begin{align*}
|\nabla_yp_t(x,y)|&\le\frac{\Phi(\Psi^{-1}(t))}{tV(x,\Psi^{-1}(C_1t))}\exp\left(-\Upsilon\left(C_2d(x,y),t\right)\right)\\
&\asymp
\begin{cases}
\frac{\Phi(\Psi^{-1}(t))}{tV(x,\Psi^{-1}(C_1t))}\exp\left(-C_2\frac{d(x,y)^2}{t}\right),&\text{if }t<d(x,y),\\
\frac{\Phi(\Psi^{-1}(t))}{tV(x,\Psi^{-1}(C_1t))}\exp\left(-C_2\left(\frac{d(x,y)}{t^{1/\beta}}\right)^{\frac{\beta}{\beta-1}}\right),&\text{if }t\ge d(x,y),\\
\end{cases}\\
&\le\begin{cases}
\frac{\Phi(\Psi^{-1}(t))}{tV(x,\Psi^{-1}(C_1t))}\exp\left(-C_2\frac{d(x,y)^2}{t}\right),&\text{if }t\in(0,1),\\
\frac{\Phi(\Psi^{-1}(t))}{tV(x,\Psi^{-1}(C_1t))}\exp\left(-C_2\left(\frac{d(x,y)}{t^{1/\beta}}\right)^{\frac{\beta}{\beta-1}}\right),&\text{if }t\in[1,+\infty),\\
\end{cases}\\
&\asymp
\begin{cases}
\frac{C_1}{\sqrt{t}V(x,\sqrt{t})}\exp\left(-C_2\frac{d(x,y)^2}{t}\right),&\text{if }t\in(0,1),\\
\frac{C_1}{t^{1-\frac{\alpha}{\beta}}V(x,t^{1/\beta})}\exp\left(-C_2\left(\frac{d(x,y)}{t^{1/\beta}}\right)^{\frac{\beta}{\beta-1}}\right),&\text{if }t\in[1,+\infty),\\
\end{cases}
\end{align*}
where in the third inequality, as in Subsection \ref{subsec_heat}, we use the facts that the function $\beta\mapsto\left(\frac{d(x,y)}{t^{1/\beta}}\right)^{\frac{\beta}{\beta-1}}$ is monotone decreasing if $d(x,y)>t$ and monotone increasing if $d(x,y)\le t$.
\end{proof}

The following lemma has been used in the above proof:

\begin{mylem}\label{lem_unibd}
Let $A\in(0,+\infty)$. Then there exists some positive constant $C$ depending only on $A$ and $\beta$ such that
$$\sup_{t,s\in(0,+\infty)}\left(A\frac{\Psi^{-1}(t)}{\Psi^{-1}(s)}-\frac{t}{s}\right)\leq C.$$
\end{mylem}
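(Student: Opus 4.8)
The plan is to reduce the statement to an elementary homogeneity property of $\Psi$. First observe that $\Psi$, as defined in (\ref{eqn_PhiPsi}), is a continuous strictly increasing bijection of $(0,+\infty)$ onto itself with $\Psi(1)=1$, so that $\Psi^{-1}$ is well defined, continuous and increasing, with $\Psi^{-1}(u)=u^{1/2}$ for $u\in(0,1)$ and $\Psi^{-1}(u)=u^{1/\beta}$ for $u\ge1$. The argument is short; essentially all the content is in the scaling inequality stated next.

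The key step is to prove
$$\Psi(\lambda r)\ge\lambda^2\,\Psi(r)\qquad\text{for all }\lambda\ge1\text{ and }r>0,$$
which I would establish by splitting into three cases according to the positions of $r$ and $\lambda r$ relative to $1$. If $r\ge1$, then $\lambda r\ge1$ and $\Psi(\lambda r)=(\lambda r)^\beta\ge\lambda^2 r^\beta=\lambda^2\Psi(r)$ because $\beta\ge2$ and $\lambda\ge1$. If $r<1$ and $\lambda r<1$, then $\Psi(\lambda r)=\lambda^2 r^2=\lambda^2\Psi(r)$ exactly. Finally, if $r<1\le\lambda r$, then $\Psi(\lambda r)/\Psi(r)=(\lambda r)^\beta/r^2=\lambda^\beta r^{\beta-2}\ge\lambda^\beta\lambda^{-(\beta-2)}=\lambda^2$, using $r\ge\lambda^{-1}$ and $\beta\ge2$. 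This mixed case is the only mildly delicate point, and it is where the standing hypothesis $\beta\ge2$ is genuinely used; the rest is bookkeeping.

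From this I would deduce the dual bound $\Psi^{-1}(\mu u)\le\sqrt{\mu}\,\Psi^{-1}(u)$ for all $\mu\ge1$ and $u>0$: setting $a=\Psi^{-1}(\mu u)$ and $b=\Psi^{-1}(u)$, monotonicity gives $a\ge b$, and if one had $a>\sqrt{\mu}\,b$, then the scaling inequality applied with $\lambda=a/b\ge1$ would give $\mu u=\Psi(a)\ge(a/b)^2\Psi(b)>\mu\Psi(b)=\mu u$, a contradiction. To conclude, fix $t,s\in(0,+\infty)$ and bound $A\,\Psi^{-1}(t)/\Psi^{-1}(s)-t/s$ in two cases. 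If $t\le s$, monotonicity of $\Psi^{-1}$ yields $\Psi^{-1}(t)/\Psi^{-1}(s)\le1$, so the quantity is $\le A$. If $t>s$, the dual bound with $\mu=t/s$ gives $\Psi^{-1}(t)/\Psi^{-1}(s)\le\sqrt{t/s}$, hence, writing $x=\sqrt{t/s}\ge1$,
$$A\,\frac{\Psi^{-1}(t)}{\Psi^{-1}(s)}-\frac{t}{s}\le Ax-x^2\le\frac{A^2}{4}.$$
Therefore the supremum in question is at most $\max\{A,A^2/4\}$, and the lemma holds with, say, $C=A+\frac{A^2}{4}$, which depends only on $A$ (a fortiori only on $A$ and $\beta$). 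I expect no serious obstacle; the only place requiring attention is the case $r<1\le\lambda r$ in the scaling inequality.
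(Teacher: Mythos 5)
Your proof is correct. It differs from the paper's argument mainly in organization: the paper bounds the ratio $\Psi^{-1}(t)/\Psi^{-1}(s)$ directly by a four-case analysis according to the positions of $t$ and $s$ relative to $1$, obtaining $f(t,s)\le A\max\left\{(t/s)^{1/2},(t/s)^{1/\beta}\right\}-\frac{t}{s}$, and then simply observes that the one-variable function $x\mapsto A\max\{x^{1/2},x^{1/\beta}\}-x$ is bounded above on $(0,+\infty)$. You instead isolate the scaling inequality $\Psi(\lambda r)\ge\lambda^2\Psi(r)$ for $\lambda\ge1$ (where the hypothesis $\beta\ge2$ enters, exactly as it does in the paper's mixed cases), dualize it to $\Psi^{-1}(\mu u)\le\sqrt{\mu}\,\Psi^{-1}(u)$ for $\mu\ge1$, and finish with the elementary bound $Ax-x^2\le A^2/4$. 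Both routes rest on the same underlying fact, namely that $\Psi^{-1}$ grows no faster than a square root across ratios $\ge1$, but yours packages it as a reusable homogeneity property of $\Psi$ and yields the explicit constant $\max\{A,A^2/4\}$, which does not even depend on $\beta$, whereas the paper's version is shorter and leaves the case-checking and the boundedness of the auxiliary function to the reader. Your case checks are sound, including the only delicate one $r<1\le\lambda r$, where $r\ge\lambda^{-1}$ and $\beta\ge2$ give $\lambda^\beta r^{\beta-2}\ge\lambda^2$, and the contradiction argument establishing the dual bound for $\Psi^{-1}$ is valid.
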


\begin{proof}
Denote
$$f(t,s)=A\frac{\Psi^{-1}(t)}{\Psi^{-1}(s)}-\frac{t}{s},\quad  t,s\in(0,+\infty).$$
By considering separately the cases $t,s\in(0,1)$, $t,s\in[1,+\infty)$, $0<t<1\le s$ and $0<s<1\le t$, one shows easily that 
$$f(t,s)\le A\max\left\{\left(\frac{t}{s}\right)^{1/2},\left(\frac{t}{s}\right)^{1/\beta}\right\}-\frac{t}{s}.$$
Since the function $(0,+\infty)\to\R$, $x\mapsto A\max\{x^{1/2}, x^{1/\beta}\}-x$ is bounded from above by some positive constant $C$ depending only on $A$ and $\beta$, one obtains
$$\sup_{t,s\in(0,+\infty)}f(t,s)\le C.$$
\end{proof}

As a consequence, we obtain an $L^p$-upper estimate for the gradient of the heat semi-group.

\begin{mycor}\label{cor_Gp}
For any $p\in(1,+\infty)$, there exists $C\in(0,+\infty)$ such that
$$\mynorm{|\nabla e^{-t\Delta}|}_{p\to p}\le
\begin{cases}
\frac{C}{\sqrt{t}},&\text{if }t\in(0,1),\\
\frac{C}{t^{1-\frac{\alpha}{\beta}}},&\text{if }t\in[1,+\infty).
\end{cases}
$$
\end{mycor}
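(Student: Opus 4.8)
The plan is to derive the semigroup bound from the pointwise kernel estimate of Theorem~\ref{thm_grad} by dominating $|\nabla e^{-t\Delta}|$ by a positive integral operator and then applying Schur's test, the regular volume condition \ref{eqn_VPhi} being used to make the volume terms uniform. First I would write, for $f\in L^p(X;m)$ and $t>0$,
$$e^{-t\Delta}f(y)=\int_X p_t(x,y)f(x)\,m(\md x)\quad\text{for }m\text{-a.e. }y,$$
and check that one may differentiate under the integral along each open cable, so that
$$|\nabla(e^{-t\Delta}f)(y)|\le\int_X|\nabla_yp_t(x,y)|\,|f(x)|\,m(\md x)=:T_t|f|(y),\qquad m\text{-a.e. }y.$$
Consequently $\mynorm{|\nabla e^{-t\Delta}|}_{p\to p}\le\mynorm{T_t}_{p\to p}$, and it suffices to estimate the norm of the positive kernel operator $T_t$.

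For this, I would feed in the explicit form of Theorem~\ref{thm_grad}: the kernel of $T_t$ is at most $\frac{C}{\sqrt{t}\,V(x,\sqrt{t})}\exp\big(-C\,d(x,y)^2/t\big)$ for $t\in(0,1)$, and at most $\frac{C}{t^{1-\alpha/\beta}\,V(x,t^{1/\beta})}\exp\big(-C\,(d(x,y)/t^{1/\beta})^{\beta/(\beta-1)}\big)$ for $t\in[1,+\infty)$. A standard annulus decomposition of $X$ (splitting into $\myset{x:2^k\rho\le d(x,y)<2^{k+1}\rho}$, $k\ge0$, with $\rho=\sqrt{t}$ or $t^{1/\beta}$) together with \ref{eqn_VD}, which is implied by \ref{eqn_VPhi}, yields $\int_X\exp(-C\,d(x,y)^2/t)\,m(\md x)\lesssim V(y,\sqrt{t})$ and $\int_X\exp(-C(d(x,y)/t^{1/\beta})^{\beta/(\beta-1)})\,m(\md x)\lesssim V(y,t^{1/\beta})$, and likewise with $x$ and $y$ interchanged. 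Using in addition that \ref{eqn_VPhi} gives $V(x,r)\asymp V(y,r)\asymp\Phi(r)$ uniformly, I then obtain, for $t\in(0,1)$,
$$\sup_{y}\int_X|\nabla_yp_t(x,y)|\,m(\md x)\lesssim\frac{1}{\sqrt{t}},\qquad\sup_{x}\int_X|\nabla_yp_t(x,y)|\,m(\md y)\lesssim\frac{1}{\sqrt{t}},$$
and the analogous bounds with $\frac{1}{\sqrt{t}}$ replaced by $\frac{1}{t^{1-\alpha/\beta}}$ for $t\in[1,+\infty)$. Schur's test then gives $\mynorm{T_t}_{p\to p}\le\big(\sup_y\int_X|\nabla_yp_t|\,m(\md x)\big)^{1-1/p}\big(\sup_x\int_X|\nabla_yp_t|\,m(\md y)\big)^{1/p}$, which is exactly the asserted estimate.

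I expect the only genuinely delicate point to be the differentiation under the integral sign in the first paragraph. On a cable system this is a one-dimensional statement on each open cable $(p,q)$: there $y\mapsto p_t(x,y)$ is $C^1$ --- indeed $C^{1,1}$, since $\Delta_yp_t(x,y)=-\frac{\dd}{\dd t}p_t(x,y)$ is bounded by Equation~(\ref{eqn_hkt}) --- and, by Theorem~\ref{thm_grad}, its difference quotients along $(p,q)$ are dominated, uniformly in the base point, by a function of $x$ lying in $L^{p'}(X;m)$, hence integrable against $|f|\in L^p(X;m)$. Dominated convergence then allows passing the (one-sided) derivative inside the integral; alternatively, one first proves the estimate for $f$ in the dense subclass $\calK$ (or $L^p\cap L^2$) and extends by density using the $L^p\to L^p$ bound just obtained.
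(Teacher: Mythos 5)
Your proposal is correct and follows the same overall strategy as the paper: dominate $|\nabla e^{-t\Delta}f|$ pointwise by the positive integral operator with kernel $|\nabla_y p_t(x,y)|$, feed in the pointwise bound of Theorem \ref{thm_grad}, and integrate the resulting Gaussian/sub-Gaussian factor using volume doubling. The only real difference is the implementation of the $L^p$ bound for that kernel operator: the paper, following \cite{ACDH04,CCFR17}, inserts the weight $\exp\left(\gamma\left(\frac{d(x,y)}{t^{1/\beta}}\right)^{\frac{\beta}{\beta-1}}\right)V(y,t^{1/\beta})^{1/p'}$ into a H\"older estimate and then applies Fubini, choosing $\gamma<C_2$, which yields along the way the weighted $L^p$ kernel estimate (\ref{eq:Lpgrad1}) recorded in the remark after Theorem \ref{thm_grad}; you instead run the plain Schur test, which only requires the $L^1$ bounds of the kernel in each variable separately and is slightly more elementary, at the price of not producing the weighted estimates. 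Your appeal to \ref{eqn_VPhi} to identify $V(x,t^{1/\beta})\asymp V(y,t^{1/\beta})\asymp\Phi(t^{1/\beta})$ is legitimate since \ref{eqn_VPhi} is a standing assumption here (under \ref{eqn_VD} alone one would instead absorb the volume ratio into the exponential), and your Schur exponents $A^{1-1/p}B^{1/p}$ are the right ones. Finally, your dominated-convergence justification of differentiating under the integral on each cable, using the bound from Theorem \ref{thm_grad} (or Equation (\ref{eqn_hkt})) as the dominating function, addresses a point the paper passes over silently; it is sound, with the small caveat that the density alternative you mention still requires the same interchange for nice $f$, so it is a reduction rather than a way around it.
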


\begin{proof}
We may assume that $t\in[1,+\infty)$ since the proof for $t\in(0,1)$ is similar. Taking $\gamma\in(0,+\infty)$, for any $f\in L^p(X;m)$, for $m$-a.e. $x\in X$, we have
\begin{align*}
&|\nabla e^{-t\Delta}f(x)|\le\int_X|\nabla_xp_t(x,y)|\cdot|f(y)|m(\md y)\\
&=\int_X|\nabla_xp_t(x,y)|\exp\left(\gamma\left(\frac{d(x,y)}{t^{1/\beta}}\right)^{\frac{\beta}{\beta-1}}\right)V(y,t^{1/\beta})^{1/p'}|f(y)|\\
&\cdot\exp\left(-\gamma\left(\frac{d(x,y)}{t^{1/\beta}}\right)^{\frac{\beta}{\beta-1}}\right)\frac{1}{V(y,t^{1/\beta})^{1/p'}}m(\md y)\\
&\le\left(\int_X|\nabla_xp_t(x,y)|^p\exp\left(\gamma p\left(\frac{d(x,y)}{t^{1/\beta}}\right)^{\frac{\beta}{\beta-1}}\right)V(y,t^{1/\beta})^{p/p'}|f(y)|^pm(\md y)\right)^{1/p}\\
&\cdot\left(\int_X\exp\left(-\gamma p'\left(\frac{d(x,y)}{t^{1/\beta}}\right)^{\frac{\beta}{\beta-1}}\right)\frac{1}{V(y,t^{1/\beta})}m(\md y)\right)^{1/p'}.
\end{align*}
By \ref{eqn_VD}, as in \cite[p. 944, line -5]{ACDH04} and \cite[Equation (2.5)]{CCFR17}, we have
$$\int_X\exp\left(-\gamma p'\left(\frac{d(x,y)}{t^{1/\beta}}\right)^{\frac{\beta}{\beta-1}}\right)\frac{1}{V(y,t^{1/\beta})}m(\md y)\lesssim1.$$
Hence,
\begin{align*}
&\int_X|\nabla e^{-t\Delta}f(x)|^pm(\md x)\\
&\lesssim\int_X\int_X|\nabla_xp_t(x,y)|^p\exp\left(\gamma p\left(\frac{d(x,y)}{t^{1/\beta}}\right)^{\frac{\beta}{\beta-1}}\right)V(y,t^{1/\beta})^{p/p'}|f(y)|^pm(\md y)m(\md x)\\
&=\int_X\left(\int_X|\nabla_xp_t(x,y)|^p\exp\left(\gamma p\left(\frac{d(x,y)}{t^{1/\beta}}\right)^{\frac{\beta}{\beta-1}}\right)m(\md x)\right)V(y,t^{1/\beta})^{p/p'}|f(y)|^pm(\md y).
\end{align*}
By Theorem \ref{thm_grad},
\begin{align*}
&\int_X|\nabla_xp_t(x,y)|^p\exp\left(\gamma p\left(\frac{d(x,y)}{t^{1/\beta}}\right)^{\frac{\beta}{\beta-1}}\right)m(\md x)\\
&\le\int_X\frac{C_1^p}{t^{(1-\frac{\alpha}{\beta})p}V(y,t^{1/\beta})^p}\exp\left(-pC_2\left(\frac{d(x,y)}{t^{1/\beta}}\right)^{\frac{\beta}{\beta-1}}\right)\exp\left(\gamma p\left(\frac{d(x,y)}{t^{1/\beta}}\right)^{\frac{\beta}{\beta-1}}\right)m(\md x).
\end{align*}
Take $\gamma\in(0,C_2)$, then \ref{eqn_VD} implies that
$$\int_X|\nabla_xp_t(x,y)|^p\exp\left(\gamma p\left(\frac{d(x,y)}{t^{1/\beta}}\right)^{\frac{\beta}{\beta-1}}\right)m(\md x)\lesssim\frac{1}{t^{(1-\frac{\alpha}{\beta})p}V(y,t^{1/\beta})^{p-1}},$$
hence
\begin{align*}
\int_X|\nabla e^{-t\Delta}f(x)|^pm(\md x)&\lesssim\int_X\frac{1}{t^{(1-\frac{\alpha}{\beta})p}V(y,t^{1/\beta})^{p-1}}V(y,t^{1/\beta})^{p/p'}|f(y)|^pm(\md y)\\
&=\frac{1}{t^{(1-\frac{\alpha}{\beta})p}}\int_X|f(y)|^pm(\md y),
\end{align*}
that is,
$$\mynorm{|\nabla e^{-t\Delta}f|}_{L^p(X;m)}\lesssim\frac{1}{t^{1-\frac{\alpha}{\beta}}}\mynorm{f}_{L^p(X;m)}.$$
\end{proof}

\section{{\large{Proof of the \texorpdfstring{$L^p$}{Lp}-Boundedness of Quasi-Riesz Transforms}}}\label{sec_Riesz}

This section is devoted to the proof of Theorem \ref{thm_Riesz}. First, we prove the $L^p$-boundedness of the local Riesz transform as follows. We need the following two results.

\begin{mylem}\label{lem_Riesz_local1}(\cite[Theorem 1.2]{CD99})
Let $(X,d,m,\calE,\calF)$ be an unbounded MMD space that admits a ``carr\'e du champ". Assume that \ref{eqn_VD} and the following local diagonal upper bound \ref{eqn_DUHKloc} of the heat kernel hold, that is, there exists $C\in(0,+\infty)$ such that
\begin{equation*}\label{eqn_DUHKloc}\tag*{DUHK(loc)}
p_t(x,x)\le\frac{C}{V(x,\sqrt{t})}
\end{equation*}
for $m$-a.e. $x\in X$, for any $t\in(0,1)$. Then the local Riesz transform $\nabla(I+\Delta)^{-1/2}$ is $L^p$-bounded for any $p\in(1,2]$.
\end{mylem}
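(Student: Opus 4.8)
This is precisely \cite[Theorem 1.2]{CD99}, so the plan is simply to observe that the argument given there for Riemannian manifolds carries over verbatim to the present MMD setting; here I sketch the main points. First I would record the $L^2$-boundedness, which is immediate from the spectral theorem: if $\{E_\lambda\}$ denotes the spectral resolution of $\Delta$, then for $u\in\calF$,
\[
\mynorm{|\nabla(I+\Delta)^{-1/2}u|}_2^2=\calE\left((I+\Delta)^{-1/2}u,(I+\Delta)^{-1/2}u\right)=\int_0^{+\infty}\frac{\lambda}{1+\lambda}\,\md\mynorm{E_\lambda u}_2^2\le\mynorm{u}_2^2.
\]
By the Marcinkiewicz interpolation theorem it then suffices to prove that $\nabla(I+\Delta)^{-1/2}$ is of weak type $(1,1)$, which yields $L^p$-boundedness for every $p\in(1,2)$.

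For the weak type $(1,1)$ estimate I would run the Calder\'on--Zygmund machinery exactly as in \cite{CD99}. Fix $f\in L^1\cap L^2$ and $\alpha>0$, and decompose $f=g+\sum_i b_i$ at height $\alpha$ with respect to the doubling space $(X,d,m)$, so that $\mynorm{g}_\infty\lesssim\alpha$, $\mynorm{g}_1\le\mynorm{f}_1$, each $b_i$ is supported in a ball $B_i=B(x_i,r_i)$ with $\int_{B_i}b_i\,\md m=0$ and $\mynorm{b_i}_1\lesssim\alpha\,m(B_i)$, and $\sum_i m(B_i)\lesssim\alpha^{-1}\mynorm{f}_1$. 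The good part $g$ is handled by the $L^2$-bound and Chebyshev's inequality: $m\left(\left\{|\nabla(I+\Delta)^{-1/2}g|>\alpha\right\}\right)\lesssim\alpha^{-2}\mynorm{g}_2^2\lesssim\alpha^{-2}\mynorm{g}_\infty\mynorm{g}_1\lesssim\alpha^{-1}\mynorm{f}_1$. For the bad part I would set $t_i=r_i^2$ and split $\nabla(I+\Delta)^{-1/2}b_i=\nabla(I+\Delta)^{-1/2}\left(I-e^{-t_i\Delta}\right)b_i+\nabla(I+\Delta)^{-1/2}e^{-t_i\Delta}b_i$; via the subordination formula $(I+\Delta)^{-1/2}=\pi^{-1/2}\int_0^{+\infty}e^{-s}e^{-s\Delta}s^{-1/2}\,\md s$ both terms reduce to integrals of $\nabla e^{-s\Delta}$ applied to $b_i$, which I would estimate on the complement of a fixed dilate of $B_i$ by combining the mean-value-zero property of $b_i$ with the $L^2$ Davies--Gaffney off-diagonal estimates for $e^{-s\Delta}$ and for $\sqrt s\,\nabla e^{-s\Delta}$ (valid on any MMD space with a carr\'e du champ by Davies' perturbation method), upgraded to $L^1\to L^2$ decay via \ref{eqn_VD} and \ref{eqn_DUHKloc}. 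Summing over $i$ and using $\sum_i m(B_i)\lesssim\alpha^{-1}\mynorm{f}_1$ gives the weak type $(1,1)$ bound.

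The main thing to be checked is that the only geometric inputs of the original argument --- the doubling property, the $L^2$-analyticity estimate $\mynorm{|\nabla e^{-s\Delta}f|}_2\lesssim s^{-1/2}\mynorm{f}_2$, the Davies--Gaffney estimates, and the on-diagonal heat kernel bound --- are all available here: the first from \ref{eqn_VD}, the second and third from the general theory of strongly local Dirichlet forms admitting a carr\'e du champ, and the fourth from \ref{eqn_DUHKloc}. The delicate step, and the one that confines the statement to the range $p\le 2$, is the passage from the $L^2$ off-diagonal estimates to $L^1\to L^2$ estimates with the sharp dependence on the ball radii; here the use of $(I+\Delta)^{-1/2}$ rather than $\Delta^{-1/2}$, together with the exponentially decaying weight $e^{-s}$ in the subordination formula, is exactly what makes the \emph{local} bound \ref{eqn_DUHKloc} (rather than a global on-diagonal bound) sufficient, since it confines the relevant times $s$ to a bounded interval.
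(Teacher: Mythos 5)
Your proposal matches the paper's treatment: the paper gives no proof of this lemma, citing \cite[Theorem 1.2]{CD99} and noting in a remark that the Riemannian argument adapts verbatim to the MMD setting with a carr\'e du champ, and your sketch of that argument (spectral-theoretic $L^2$ bound, Calder\'on--Zygmund decomposition with the semigroup approximation at scale $t_i=r_i^2$, subordination, Davies--Gaffney estimates together with \ref{eqn_VD} and \ref{eqn_DUHKloc}) is faithful to it. One cosmetic remark: the mean-zero property of the $b_i$ is not actually used in \cite{CD99} --- the subtraction of $e^{-t_i\Delta}b_i$ is precisely what replaces kernel regularity and cancellation --- so that ingredient in your sketch is superfluous rather than needed.
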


\begin{mylem}\label{lem_Riesz_local2}(\cite[THEOREM 1.5]{ACDH04})
Let $(X,d,m,\calE,\calF)$ be an unbounded MMD space that admits a ``carr\'e du champ". Assume that \ref{eqn_VD} and the following local $L^2$-Poincar\'e inequality on balls \ref{eqn_PI2loc} hold, that is, for any $r_0\in(0,+\infty)$, there exists a positive constant $C(r_0)$ depending on $r_0$ such that for any ball $B=B(x,r)$ with $r\in(0,r_0)$, for any $u\in\calF$, we have
\begin{equation*}\label{eqn_PI2loc}\tag*{PI(2,loc)}
\int_B|u-u_B|^2\md m\le C(r_0)r^2\int_B|\nabla u|^2\md m.
\end{equation*}
If there exist $p_0\in(2,+\infty]$, $\delta\in[0,+\infty)$ and $C\in(0,+\infty)$ such that
$$\mynorm{|\nabla e^{-t\Delta}|}_{p_0\to p_0}\le\frac{Ce^{\delta t}}{\sqrt{t}}\text{ for any }t\in(0,+\infty),$$
then the local Riesz transform $\nabla(aI+\Delta)^{-1/2}$ is $L^p$-bounded for any $p\in(2,p_0)$ and $a\in(\delta,+\infty)$.
\end{mylem}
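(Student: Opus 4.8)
The plan is to derive the statement from the abstract criterion for $L^p$-boundedness with $p>2$ of Auscher, Coulhon, Duong and Hofmann (\cite[Theorem 2.1]{ACDH04}), applied to the operator $L=aI+\Delta$ and the sublinear operator $T=\nabla L^{-1/2}=\nabla(aI+\Delta)^{-1/2}$. Fix $a>\delta$ and put $b=a-\delta>0$; rescaling the hypothesis gives
$$\mynorm{|\nabla e^{-tL}|}_{p_0\to p_0}=e^{-at}\mynorm{|\nabla e^{-t\Delta}|}_{p_0\to p_0}\le Ce^{-bt}t^{-1/2}\qquad(t>0).$$
That criterion requires two things: (i) that $T$ be bounded on $L^2$; and (ii) that there exist an integer $m\ge1$ and, for every ball $B$ of radius $r$, coefficients $(\alpha_j)_{j\ge2}$, $(\beta_j)_{j\ge2}$ with $\sum_j\alpha_j2^{j\nu}<\infty$, $\sum_j\beta_j2^{j\nu}<\infty$ ($\nu$ a doubling exponent of \ref{eqn_VD}), such that for every $f$ supported in $B$ and every $j\ge2$,
$$\left(\dashint_{2^{j+1}B\setminus2^jB}\left|T(I-e^{-r^2L})^mf\right|^{p_0}\md m\right)^{1/p_0}\le\alpha_j\left(\dashint_B|f|^2\md m\right)^{1/2},$$
and the analogous inequality with $e^{-r^2L}$ in place of $T(I-e^{-r^2L})^m$ and $\beta_j$ in place of $\alpha_j$. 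Granting (i) and (ii), \cite[Theorem 2.1]{ACDH04} gives that $T$ is bounded on $L^p$ for every $p\in(2,p_0)$, which is the assertion.

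Condition (i) is immediate: since the space admits a carr\'e du champ, $\mynorm{|\nabla u|}_2^2=\calE(u,u)\le\calE(u,u)+a\mynorm{u}_2^2=\mynorm{L^{1/2}u}_2^2$, whence $\mynorm{T}_{2\to2}\le1$ (and likewise $T(I-e^{-r^2L})^m$ is $L^2$-bounded). The main tool for (ii) is the $L^2$ Davies--Gaffney estimate
$$\mynorm{e^{-tL}f}_{L^2(F)}\le e^{-bt}\exp\!\left(-\frac{d(E,F)^2}{ct}\right)\mynorm{f}_{L^2(E)}\qquad(\mathrm{supp}\,f\subseteq E),$$
which holds on every strongly local MMD space with a carr\'e du champ by Davies' perturbation trick $e^{\rho\phi}Le^{-\rho\phi}$ with $\phi$ bounded and $1$-Lipschitz (the factor $e^{-bt}$ coming from the $aI$ term after subtracting $\delta$). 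Combining this with the local two-sided Gaussian bounds for $\Delta$ on scales $\le r_0$ supplied by \ref{eqn_PI2loc} and \ref{eqn_VD} (theorem of Grigor'yan and Saloff-Coste) and with the identity $e^{-tL}=e^{-at}e^{-t\Delta}$, which makes the $aI$-shift kill the large-time behaviour, one obtains by standard heat-kernel manipulations the required $L^2\to L^{p_0}$ annular bounds for $e^{-r^2L}$ with $\beta_j$ decreasing faster than any power of $2^{-j}$; this disposes of the ``$e^{-r^2L}$'' part of (ii).

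For the ``$T(I-e^{-r^2L})^m$'' part one first upgrades the global bound on $|\nabla e^{-tL}|$ to off-diagonal form. Factoring $\nabla e^{-tL}=(\nabla e^{-tL/3})\,e^{-tL/3}\,e^{-tL/3}$, one bounds the first factor by the rescaled hypothesis ($L^{p_0}\to L^{p_0}$, with the weight $e^{-bt}t^{-1/2}$), the second by the $L^2\to L^{p_0}$ bound just obtained, and the third by $L^2$ Davies--Gaffney, routing the whole spatial decay $\exp(-d(\cdot,\cdot)^2/ct)$ through the Davies--Gaffney factors; this yields $\mynorm{|\nabla e^{-tL}|}_{L^2(B)\to L^{p_0}(2^{j+1}B\setminus2^jB)}\lesssim m(2^jB)^{\frac1{p_0}-\frac12}e^{-bt}t^{-1/2}g\!\left((2^jr)^2/t\right)$ for a rapidly decreasing $g$. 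One then feeds this into the subordination formula $\nabla L^{-1/2}(I-e^{-r^2L})^m=c_m\int_0^\infty\nabla e^{-tL}\,k_m(r^2/t)\,t^{-1/2}\,\md t$, where $k_m$ is bounded and $k_m(\lambda)=O(\lambda^m)$ as $\lambda\to0$; this last property, which is precisely what the $m$-th power of $I-e^{-r^2L}$ buys, makes the $t$-integral converge near $0$, the factor $k_m(r^2/t)$ controlling the range $t\lesssim r^2$ and the factors $e^{-bt}$, $g((2^jr)^2/t)$ controlling $t\gtrsim r^2$. Carrying out the $t$-integral produces the estimate in (ii) with $\alpha_j$ decreasing faster than any power of $2^{-j}$.

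The delicate point is this last step. The hypothesis provides only a \emph{global} $L^{p_0}$ bound on $|\nabla e^{-tL}|$, so spatial localisation of $\nabla e^{-tL}$ has to be manufactured by composing with heat-semigroup factors that do decay off the diagonal, and one must preserve enough decay in $j$ while respecting that $\nabla e^{-tL/3}$ is controlled on $L^{p_0}$ but not on $L^2$. Here \ref{eqn_PI2loc} (with \ref{eqn_VD}) is indispensable — it furnishes the local Gaussian bounds that turn $L^2$ Davies--Gaffney into $L^2\to L^{p_0}$ estimates — and the restriction $a>\delta$ is essential: the decay $e^{-bt}$ it produces is what makes the problem effectively local (so that no large-time on-diagonal bound is needed) and what makes every $t$-integral above converge at $\infty$.
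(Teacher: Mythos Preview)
The paper does not actually prove this lemma; it is quoted from \cite[THEOREM 1.5]{ACDH04}, and the only comment offered is the remark immediately following it that ``the same proof easily adapts to our setting.'' Your proposal is therefore not to be compared against a proof in the paper but against the argument of \cite{ACDH04} itself, and on that score your outline is faithful: you invoke exactly the abstract criterion \cite[Theorem 2.1]{ACDH04}, reduce to off-diagonal $L^2\!\to\!L^{p_0}$ estimates for $e^{-r^2L}$ and $\nabla e^{-tL}$ via the three-factor trick, and feed these into the subordination integral with the cancellation supplied by $(I-e^{-r^2L})^m$. This is precisely the route taken in \cite{ACDH04}, with the shift $L=aI+\Delta$, $a>\delta$, playing the role it does there of rendering the argument effectively local.

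One point where your sketch is a bit loose: the claim that \ref{eqn_VD} together with \ref{eqn_PI2loc} yields local two-sided Gaussian bounds is correct (Grigor'yan/Saloff-Coste), but passing from there to the $L^2\!\to\!L^{p_0}$ off-diagonal bounds for $e^{-tL}$ with the required annular decay is not quite ``standard heat-kernel manipulations'' --- one interpolates the pointwise Gaussian upper bound (hence $L^1\!\to\!L^\infty$) against Davies--Gaffney ($L^2\!\to\!L^2$), and has to check that the exponential spatial decay survives interpolation with controlled constants in $j$. This is done carefully in \cite{ACDH04}, so as a citation-plus-outline your proposal is adequate; as a self-contained proof it would need those estimates spelled out.
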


\begin{myrmk}
{\em 
Although the orginal version of the above two results was stated in the setting of Riemannian manifolds, the same proof easily adapts to our setting.
}
\end{myrmk}

\begin{proof}[Proof of the $L^p$-boundedness of $\nabla(I+\Delta)^{-1/2}$]
If $p\in(1,2]$, then by Equation (\ref{eqn_hk}), we have \ref{eqn_DUHKloc}. By Lemma \ref{lem_Riesz_local1}, we have $\nabla(I+\Delta)^{-1/2}$ is $L^p$-bounded.

If $p\in(2,+\infty)$, then since \hyperlink{eqn_HKPsi}{HK($\Psi$)} holds, by Proposition \ref{prop_HK}, we have \ref{eqn_PI} which implies \ref{eqn_PI2loc}. Take an arbitrary $p_0\in(p,+\infty)$. By Corollary \ref{cor_Gp}, we have
$$\mynorm{|\nabla e^{-t\Delta}|}_{p_0\to p_0}\le
\begin{cases}
\frac{C}{\sqrt{t}},&\text{if }t\in(0,1),\\
\frac{C}{t^{1-\frac{\alpha}{\beta}}},&\text{if }t\in[1,+\infty).
\end{cases}
$$
Since $\sup_{t\in[1,+\infty)}t^{\frac{\alpha}{\beta}-\frac{1}{2}}e^{-\frac{1}{2}t}\in(0,+\infty)$, for any $t\in[1,+\infty)$, we have
$$\frac{1}{t^{1-\frac{\alpha}{\beta}}}=\frac{1}{t^{1-\frac{\alpha}{\beta}}}\frac{1}{\frac{e^{\frac{1}{2}t}}{\sqrt{t}}}\frac{e^{\frac{1}{2}t}}{\sqrt{t}}=t^{\frac{\alpha}{\beta}-\frac{1}{2}}e^{-\frac{1}{2}t}\frac{e^{\frac{1}{2}t}}{\sqrt{t}}\le\left(\sup_{t\in[1,+\infty)}t^{\frac{\alpha}{\beta}-\frac{1}{2}}e^{-\frac{1}{2}t}\right)\frac{e^{\frac{1}{2}t}}{\sqrt{t}}.$$
Hence
$$\mynorm{|\nabla e^{-t\Delta}|}_{p_0\to p_0}\le C\max\myset{1,\sup_{t\in[1,+\infty)}t^{\frac{\alpha}{\beta}-\frac{1}{2}}e^{-\frac{1}{2}t}}\frac{e^{\frac{1}{2}t}}{\sqrt{t}}\text{ for any }t\in(0,+\infty).$$
By Lemma \ref{lem_Riesz_local2}, we have $\nabla(I+\Delta)^{-1/2}$ is $L^p$-bounded.
\end{proof}
We now prove the $L^p$-boundedness of the quasi-Riesz transform at infinity.

\begin{proof}[Proof of the $L^p$-boundedness of $\nabla e^{-\Delta}\Delta^{-\veps}$]
Note that
$$\nabla e^{-\Delta}\Delta^{-\veps}=\frac{1}{\Gamma(\veps)}\int_0^{+\infty}\nabla e^{-(1+t)\Delta}\frac{\md t}{t^{1-\veps}}.$$
For any $p\in(1,+\infty)$, for any $f\in L^p(X;m)$, by Corollary \ref{cor_Gp}, we have
\begin{align*}
\mynorm{|\nabla e^{-\Delta}\Delta^{-\veps}f|}_{L^p(X;m)}&\le\frac{1}{\Gamma(\veps)}\int_{0}^{+\infty}\mynorm{|\nabla e^{-(1+t)\Delta}f|}_{L^p(X;m)}\frac{\md t}{t^{1-\veps}}\\
&\lesssim\int_0^{+\infty}\frac{1}{(1+t)^{1-\frac{\alpha}{\beta}}}\frac{\md t}{t^{1-\veps}}\mynorm{f}_{L^p(X;m)}.
\end{align*}
Since $\veps\in(0,1-\frac{\alpha}{\beta})$, the above integral converges; this implies that $\nabla e^{-\Delta}\Delta^{-\veps}$ is $L^p$-bounded.
\end{proof}

\bibliographystyle{plain}

\begin{thebibliography}{10}

\bibitem{A}
G.~K. {Alexopoulos}.
\newblock {\em {Sub-Laplacians with drift on Lie groups of polynomial volume
  growth}}, volume 739.
\newblock Providence, RI: American Mathematical Society (AMS), 2002.

\bibitem{Ame21}
A.~{Amenta}.
\newblock {New Riemannian manifolds with \(L^p\)-unbounded Riesz transform for
  \(p > 2\)}.
\newblock {\em {Math. Z.}}, 297(1-2):99--112, 2021.

\bibitem{AB15}
S.~Andres and M.~T. Barlow.
\newblock Energy inequalities for cutoff functions and some applications.
\newblock {\em {J. Reine Angew. Math.}}, 2015(699):183--215, 2015.

\bibitem{AC05}
P.~{Auscher} and T.~{Coulhon}.
\newblock {Riesz transform on manifolds and Poincar\'e inequalities}.
\newblock {\em {Ann. Sc. Norm. Super. Pisa, Cl. Sci. (5)}}, 4(3):531--555,
  2005.

\bibitem{ACDH04}
P.~Auscher, T.~Coulhon, X.~T. Duong, and S.~Hofmann.
\newblock Riesz transform on manifolds and heat kernel regularity.
\newblock {\em Ann. Sci. \'Ecole Norm. Sup.}, 37(6):911--957, 2004.

\bibitem{BCLS}
D.~Bakry, T.~Coulhon, M.~Ledoux, and L.~Saloff-Coste.
\newblock Sobolev inequalities in disguise.
\newblock {\em Indiana Univ. Math. J.}, 44(4):1033--1074, 1995.

\bibitem{BCG01}
M.~Barlow, T.~Coulhon, and A.~Grigor’yan.
\newblock Manifolds and graphs with slow heat kernel decay.
\newblock {\em Invent. Math.}, 144(3):609--649, 2001.

\bibitem{B98}
M.~T. {Barlow}.
\newblock {Diffusions on fractals}.
\newblock In {\em {Lectures on probability theory and statistics. Ecole d'Et\'e
  de probabilit\'es de Saint-Flour XXV - 1995. Lectures given at the summer
  school in Saint-Flour, France, July 10-26, 1995}}, pages 1--121. Berlin:
  Springer, 1998.

\bibitem{BB89}
M.~T. {Barlow} and R.~F. {Bass}.
\newblock {The construction of Brownian motion on the Sierpinski carpet}.
\newblock {\em {Ann. Inst. Henri Poincar\'e, Probab. Stat.}}, 25(3):225--257,
  1989.

\bibitem{BB90}
M.~T. {Barlow} and R.~F. {Bass}.
\newblock {On the resistance of the Sierpi\'nski carpet}.
\newblock {\em {Proc. R. Soc. Lond., Ser. A}}, 431(1882):345--360, 1990.

\bibitem{BB92}
M.~T. {Barlow} and R.~F. {Bass}.
\newblock {Transition densities for Brownian motion on the Sierpinski carpet}.
\newblock {\em {Probab. Theory Relat. Fields}}, 91(3-4):307--330, 1992.

\bibitem{BBS90}
M.~T. {Barlow}, R.~F. {Bass}, and J.~D. {Sherwood}.
\newblock {Resistance and spectral dimension of Sierpinski carpets}.
\newblock {\em {J. Phys. A, Math. Gen.}}, 23(6):l253--l258, 1990.

\bibitem{BGK12}
M.~T. {Barlow}, A.~{Grigor'yan}, and T.~{Kumagai}.
\newblock {On the equivalence of parabolic Harnack inequalities and heat kernel
  estimates}.
\newblock {\em {J. Math. Soc. Japan}}, 64(4):1091--1146, 2012.

\bibitem{BP88}
M.~T. {Barlow} and E.~A. {Perkins}.
\newblock {Brownian motion on the Sierpinski gasket}.
\newblock {\em {Probab. Theory Relat. Fields}}, 79(4):543--623, 1988.

\bibitem{BF16}
F.~{Bernicot} and D.~{Frey}.
\newblock {Riesz transforms through reverse H\"older and Poincar\'e
  inequalities}.
\newblock {\em {Math. Z.}}, 284(3-4):791--826, 2016.

\bibitem{C96}
G.~{Carron}.
\newblock {In\'egalit\'es isop\'erim\'etriques de Faber-Krahn et
  cons\'equences}.
\newblock In {\em {Actes de la table ronde de g\'eom\'etrie diff\'erentielle en
  l'honneur de Marcel Berger, Luminy, France, 12--18 juillet, 1992}}, pages
  205--232. Paris: Soci\'et\'e Math\'ematique de France, 1996.

\bibitem{Chen15}
L.~{Chen}.
\newblock {Sub-Gaussian heat kernel estimates and quasi Riesz transforms for
  \(1\leq p\leq 2\)}.
\newblock {\em {Publ. Mat., Barc.}}, 59(2):313--338, 2015.

\bibitem{CCFR17}
L.~{Chen}, T.~{Coulhon}, J.~{Feneuil}, and E.~{Russ}.
\newblock {Riesz transform for \(1\leq p \leq 2\) without Gaussian heat kernel
  bound}.
\newblock {\em {J. Geom. Anal.}}, 27(2):1489--1514, 2017.

\bibitem{CD99}
T.~{Coulhon} and X.~T. {Duong}.
\newblock {Riesz transforms for \(1\leq p\leq 2\)}.
\newblock {\em {Trans. Am. Math. Soc.}}, 351(3):1151--1169, 1999.

\bibitem{CJKS20}
T.~{Coulhon}, R.~{Jiang}, P.~{Koskela}, and A.~{Sikora}.
\newblock {Gradient estimates for heat kernels and harmonic functions}.
\newblock {\em {J. Funct. Anal.}}, 278(8):67, 2020.
\newblock Id/No 108398.

\bibitem{Dav97}
E.~B. {Davies}.
\newblock {Non-Gaussian aspects of heat kernel behaviour}.
\newblock {\em {J. Lond. Math. Soc., II. Ser.}}, 55(1):105--125, 1997.

\bibitem{Dun04a}
N.~{Dungey}.
\newblock {Heat kernel estimates and Riesz transforms on some Riemannian
  covering manifolds}.
\newblock {\em {Math. Z.}}, 247(4):765--794, 2004.

\bibitem{Dun04b}
N.~{Dungey}.
\newblock {Some gradient estimates on covering manifolds}.
\newblock {\em {Bull. Pol. Acad. Sci., Math.}}, 52(4):437--443, 2004.

\bibitem{FOT11}
M.~{Fukushima}, Y.~{Oshima}, and M.~{Takeda}.
\newblock {\em {Dirichlet forms and symmetric Markov processes. 2nd revised and
  extended ed}}, volume~19.
\newblock Berlin: Walter de Gruyter, 2nd revised and extended ed. edition,
  2011.

\bibitem{Gri92}
A.~{Grigor'yan}.
\newblock {The heat equation on noncompact Riemannian manifolds}.
\newblock {\em {Math. USSR, Sb.}}, 72(1), 1992.

\bibitem{Gri94}
A.~{Grigor'yan}.
\newblock {Heat kernel upper bounds on a complete non-compact manifold}.
\newblock {\em {Rev. Mat. Iberoam.}}, 10(2):395--452, 1994.

\bibitem{Gri09}
A.~{Grigor'yan}.
\newblock {\em {Heat kernel and analysis on manifolds}}, volume~47.
\newblock Providence, RI: American Mathematical Society (AMS); Somerville, MA:
  International Press, 2009.

\bibitem{GH14a}
A.~{Grigor'yan} and J.~{Hu}.
\newblock {Heat kernels and Green functions on metric measure spaces}.
\newblock {\em {Can. J. Math.}}, 66(3):641--699, 2014.

\bibitem{GHL15}
A.~{Grigor'yan}, J.~{Hu}, and K.-S. {Lau}.
\newblock {Generalized capacity, Harnack inequality and heat kernels of
  Dirichlet forms on metric measure spaces}.
\newblock {\em {J. Math. Soc. Japan}}, 67(4):1485--1549, 2015.

\bibitem{GT12}
A.~{Grigor'yan} and A.~{Telcs}.
\newblock {Two-sided estimates of heat kernels on metric measure spaces}.
\newblock {\em {Ann. Probab.}}, 40(3):1212--1284, 2012.

\bibitem{HKKZ00}
Ben~M. {Hambly}, Takashi {Kumagai}, Shigeo {Kusuoka}, and Xian~Yin {Zhou}.
\newblock {Transition density estimates for diffusion processes on homogeneous
  random Sierpi\'nski carpets}.
\newblock {\em {J. Math. Soc. Japan}}, 52(2):373--408, 2000.

\bibitem{Jiang15}
R.~{Jiang}.
\newblock {The Li-Yau inequality and heat kernels on metric measure spaces}.
\newblock {\em {J. Math. Pures Appl. (9)}}, 104(1):29--57, 2015.

\bibitem{JKY14}
R.~{Jiang}, P.~{Koskela}, and D.~{Yang}.
\newblock {Isoperimetric inequality via Lipschitz regularity of
  Cheeger-harmonic functions}.
\newblock {\em {J. Math. Pures Appl. (9)}}, 101(5):583--598, 2014.

\bibitem{Kig89}
J.~{Kigami}.
\newblock {A harmonic calculus on the Sierpi\'nski spaces}.
\newblock {\em {Japan J. Appl. Math.}}, 6(2):259--290, 1989.

\bibitem{Kig01}
J.~{Kigami}.
\newblock {\em {Analysis on fractals. Paperback reprint of the hardback edition
  2001}}.
\newblock Cambridge: Cambridge University Press, paperback reprint of the
  hardback edition 2001 edition, 2008.

\bibitem{KZ92}
S.~{Kusuoka} and X.~Y. {Zhou}.
\newblock {Dirichlet forms on fractals: Poincar\'e constant and resistance}.
\newblock {\em {Probab. Theory Relat. Fields}}, 93(2):169--196, 1992.

\bibitem{LY86}
P.~{Li} and S.~T. {Yau}.
\newblock {On the parabolic kernel of the Schr\"odinger operator}.
\newblock {\em {Acta Math.}}, 156:154--201, 1986.

\bibitem{Sal90}
L.~{Saloff-Coste}.
\newblock {Analyse sur les groupes de Lie \`a croissance polyn\^omiale.
  (Analysis on Lie groups of polynomial growth)}.
\newblock {\em {Ark. Mat.}}, 28(2):315--331, 1990.

\bibitem{Sal92}
L.~{Saloff-Coste}.
\newblock {A note on Poincar\'e, Sobolev, and Harnack inequalities}.
\newblock {\em {Int. Math. Res. Not.}}, 1992(2):27--38, 1992.

\bibitem{Sal95}
L.~{Saloff-Coste}.
\newblock {Parabolic Harnack inequality for divergence form second order
  differential operators}.
\newblock {\em {Potential Anal.}}, 4(4):429--467, 1995.

\bibitem{S94}
Paolo~M. Soardi.
\newblock {\em Potential theory on infinite networks}, volume 1590 of {\em
  Lecture Notes in Mathematics}.
\newblock Springer-Verlag, Berlin, 1994.

\bibitem{Str83}
R.~S. {Strichartz}.
\newblock {Analysis of the Laplacian on a complete Riemannian manifold}.
\newblock {\em {J. Funct. Anal.}}, 52:48--79, 1983.

\bibitem{Str00}
R.~S. {Strichartz}.
\newblock {Taylor approximations on Sierpinski gasket type fractals}.
\newblock {\em {J. Funct. Anal.}}, 174(1):76--127, 2000.

\bibitem{THP14}
D.~{Tang}, R.~{Hu}, and C.~{Pan}.
\newblock {H\"older estimates of harmonic functions on a class of p.c.f.
  self-similar sets}.
\newblock {\em {Anal. Theory Appl.}}, 30(3):296--305, 2014.

\end{thebibliography}

\end{document}